\documentclass{siamart220329}
\usepackage[utf8]{inputenc}

\usepackage{appendix}
\usepackage{amsmath}
\usepackage{graphicx}
\usepackage{stmaryrd}
\usepackage[margin=1in]{geometry}

\usepackage{amsfonts}

\usepackage{color}
\usepackage{xcolor}
\usepackage{braket}
\usepackage{url}
\usepackage{hyperref}
\hypersetup{
    colorlinks,
    linkcolor={red!50!black},
    citecolor={blue!50!black},
    urlcolor={blue!20!black}
}

\newcommand{\order}{{\mathcal O}}
\newcommand{\avg}[1]{\braket{#1}}
\newcommand{\hq}{\overline{q}}
\newcommand{\heta}{\overline{\eta}}
\newcommand{\mean}[1]{\braket{#1}}
\newcommand{\fluctint}[1]{\llbracket #1 \rrbracket}
\newcommand\Hm[1]{\mean{H^{#1}}}
\newcommand\braced[1]{\left\{{#1}\right\}}

\newcommand{\muhat}{\hat{\mu}}
\newcommand{\FF}{\mathcal{F}}
\newcommand{\bhat}{\hat{\beta}}

\newtheorem{lem}{Lemma}
\newtheorem{prop}{Proposition}
\newtheorem{cor}{Corollary}
\newtheorem{rem}{Remark}

\title{A multiscale model \\
for weakly nonlinear shallow water waves \\
over periodic bathymetry}
\author{David I. Ketcheson\thanks{{\texttt{david.ketcheson@kaust.edu.sa}}, Applied Mathematics and Computational Science, CEMSE Division, King Abdullah University of Science and Technology (KAUST), Thuwal, 23955-6900, Kingdom of Saudi Arabia}
\and Lajos L\'oczi\thanks{{\texttt{LLoczi@inf.elte.hu}}, Department of Numerical Analysis, ELTE E\"otv\"os Lor\'and University, P\'azm\'any P.~s.~1/C, H-1117 Budapest, Hungary, and Department of Analysis and Operations Research, BME Budapest University of Technology and Economics}
\and Giovanni Russo\thanks{\texttt{giovanni.russo1@unict.it}, Department of Mathematics and Computer Science, University of Catania, Viale A.\ Doria 6, 95125 Catania, Italy}}
%\date{November 2022}

\begin{document}

\maketitle

\begin{abstract}
We study the behavior of shallow water waves over periodically-varying bathymetry, based on the first-order hyperbolic
Saint-Venant equations.  Although solutions of this system are known to generally exhibit wave breaking, numerical experiments suggest
a different behavior in the presence of periodic bathymetry.
Starting from the first-order
variable-coefficient hyperbolic system, we apply a multiple-scale perturbation approach
in order to derive a system of constant-coefficient 
high-order partial differential equations whose solution approximates
that of the original system.  The high-order system turns out to
be dispersive and exhibits solitary-wave formation, in close agreement with direct numerical simulations of the original system.
We show that the constant-coefficient homogenized system 
can be used to study the properties of solitary waves and to
conduct efficient numerical simulations.
\end{abstract}

\section{Introduction}
Linear wave propagation in periodic media has been extensively studied in a variety of contexts.
In solid state physics, the behavior of optical and acoustic waves in crystals has been understood for more than a century, including associated effects like Bragg diffraction and band-gaps, which result from the interaction of the wave with the periodic structure (see, for example, \cite{ashcroft2022solid}).
%{\em (mention Bragg diffraction, linear and nonlinear effects, acoustic and optical branch of phonon dispersion relation, which are due to the structure of the crystal. )}
Such effects become significant when the period of the structure is small but not negligible with respect to the typical length scale of the wave. 

More recently, periodic materials have been engineered specifically to obtain desired wave propagation properties. 
Metamaterials are composite materials obtained by assembling a large number of small unit cells, such that on a length scale much greater than the cellular size, they behave as a homogeneous material with different properties from the original constituent materials. 
In the most interesting cases they exhibit properties that are not found in natural homogeneous materials and are not even intermediate between the properties (mechanical or optical) of the constituents.
An enormous literature on metamaterials is available, and is mostly focused on
electromagnetic, acoustic, or elastic waves.  Recently, metamaterials (including
effects like cloaking) have been studied in the context of water waves \cite{berraquero2013experimental,porter2017cloaking}.  In this context a periodic
medium is introduced by periodic variation of the bottom elevation (bathymetry).
Analysis of the linear propagation of water waves over periodic bathymetry is facilitated
through perturbation theory; see for instance the recent works \cite{maurel2019scattering,porter2019extended,marangos2021shallow} for
specific application to water waves and earlier works such as \cite{santosa1991,quezada_dispersion} for more general analysis of linear waves.
%; here we mention a couple of papers on acoustic metamaterials 
%\cite{cummer2016controlling,torrent2007acoustic}, which are a particular type of mechanical metamaterials. In these papers the authors emphasize the remarkable properties and the anomalous behaviour of such materials. 
%Metamaterials can even be formed using fluids;
%in \cite{lee2016origin} it is shown how to create a simple 1D acoustic metamaterial formed by a gas tube, with compartments separated by elastic membranes. 

The nonlinear behavior of waves in periodic structures has received
relatively less attention.  Indeed, the
analysis of the properties of acoustic metamaterials is in general based on linear or linearized equations, so that it is assumed that the displacement, or any type of signal, is sufficiently small so that linear theory can be used, or that nonlinear effects can be computed by perturbation methods, still assuming sufficiently small signals. 
However, in real-world settings there are often situations in which
nonlinear effects are crucial and cannot be neglected. 

% I am not sure the paragraph below is pertinent. 
% Among the surprising properties of acoustic metamaterials, we recall the so called {\em negative mass\/}. This means that the system behaves like a peculiar mass-spring system which, if excited with with a periodic external forcing term with a suitable frequency, shows a displacement which is in phase with the force, rather than in phase opposition, as one would expect \cite{ma2016acoustic}. 

 Perturbation techniques are also commonly used to study weakly nonlinear effects, thus capturing essential features of the phenomenon, without the complexity of the full nonlinear system. 
 In \cite{leveque2003}, propagation of waves in a layered elastic medium has been studied. The 
 medium is formed by a large number of alternating layers of two materials,
 each one with its unperturbed density and strain-stress relation. All layers of the same material have the same unperturbed thickness. 
Under the assumption that the period of the unperturbed multimedia (i.e., the thickness of a double layer) is considerably smaller than the wavelength, 
detailed numerical computation shows 
the appearance of wave patterns typical of dispersive waves. 
Using perturbation methods, the authors were able to derive effective equations for the propagation of waves in a homogenized medium, which are in good agreement with the detailed numerical simulation of the wave propagation in the multilayer system. The agreement improves if more terms are included in the perturbation expansion. 

Multilayered fluids have also been considered in the literature. 
In \cite{phan2023numerical}, for example, the behaviour of a large number of pairs of layers is studied numerically. Each pair is formed by two different fluids, each one treated as a stiffened gas with its own standard density $\rho$, adiabatic exponent $\gamma$, and a constant $\Pi$ which determines the stiffness of the fluid, and which is sometimes called attractive pressure \cite{Chiapolino_Saurel_18}. In the paper it is shown that a simple isentropic homogenized model is able to capture the behavior of the solution to the multilayer problem, but only before shocks develop.  

Water waves present an interesting scenario for the study of nonlinear waves in a periodic medium.  Here the periodicity is introduced through variation in the bathymetry, or bottom elevation.
This is the subject of the present work.  In order to understand possible dispersive effects introduced by the bathymetry, separate from other dispersive effects present in water waves over a flat bottom, we start from the non-dispersive shallow water equations.
We apply a perturbation technique similar to that employed in \cite{leveque2003}, with the goal of deriving 
a set of homogenized effective equations that approximate to high accuracy the solution of the variable-bathymetry shallow water equations.
%\lajos{solution of propagation seems strange, please check, sounds better 
%if we remove "a detailed solution of"}
%\giovanni{I agree. Another possibility would be ``the behavior of small amplitude shallow water waves over periodic bathymetry''. I leave the decision to David.}
%\david{I revised it; let me know if you think it needs more work.}

In addition to the literature on water wave metamaterials mentioned already,
many other previous works have examined the effect of bathymetry on water waves.
%Waves over small-amplitude random bathymetry have been shown to undergo effective diffusion \cite{Garnier2007a,Gomez2009}.
Starting from the equations for irrotational, inviscid flow with a free surface,
it has been shown that solitary waves in shallow water over periodic bathymetry obey a KdV-type equation
with modified velocity and dispersion \cite{rosales1983gravity}.
Effective dispersion of shallow water waves over periodic bathymetry was also
studied in \cite{2021_solitary}, where the configuration of the bathymetry to
the waves is rotated by 90 degrees relative to what is studied in the present
work.
Here we focus on waves that are (somewhat) long relative to the
bathymetric variation;
in \cite{Benilov2006}, the authors study the opposite relative scaling, in which the waves
oscillate much more rapidly than the bathymetry.
Multiple-scale analysis was also applied to the shallow water equations in \cite{yong2002initial}, with flat bathymetry and a rapidly-oscillating boundary.

The plan of the paper is the following. The next subsection is devoted to the description of the problem set-up. 
In Section \ref{sec:homog} we perform the multiple-scale analysis which leads to the effective equations to various order in the small parameter. In Section \ref{sec:numerical} we perform comparisons with a detailed numerical solution of the Saint-Venant equations over periodic bathymetry. In the last section we draw conclusions and discuss directions for future work. 

The analysis in Section \ref{sec:homog} is facilitated by a certain averaging operator $\fluctint{\cdot}$, defined therein.
In the Appendix, we state and prove several properties of this operator that are essential
in simplifying the analysis.

% In a multilayer system formed by several pairs of layers of two different fluids, for example, the non-linear response of the gases cannot be neglected, and shocks will in general form and propagate in the multilayer medium, unless the initial state or the forcing terms are suitably small. 

\subsection{Model equations and assumptions}
In this work we study the shallow water wave (or Saint-Venant) model:
\begin{subequations} \label{eq:sw1}
\begin{align}
    h_t + (hu)_x & = 0 \\
    (hu)_t + \left( hu^2 + \frac{1}{2}gh^2 \right)_x & = - g h b_x,
\end{align}
\end{subequations}
where $h(x,t)$ and $u(x,t)$ denote, respectively the water depth and the depth-averaged velocity, $b(x)$ denotes the bottom elevation (bathymetry), and subscripts denote partial derivatives with respect to the corresponding variables.
We are interested in the behavior of waves propagating over periodic bathymetry with period $\delta$:
$$
    b(x+\delta) = b(x).
$$
We focus on waves whose wavelength is long relative to $\delta$, and scenarios
in which the variation in $b(x)$ is of the same order as the overall depth (but
not so large that dry states appear).  The notation and scales involved are
depicted in Figure \ref{fig:scenario}.  To facilitate the analysis that will follow,
in this work we assume that $b(x)$ is continuously differentiable.
However, numerical experiments suggest that the regularity of $b$ has
little effect on the qualitative behavior discussed herein.
\begin{figure}
    \includegraphics[width=\textwidth]{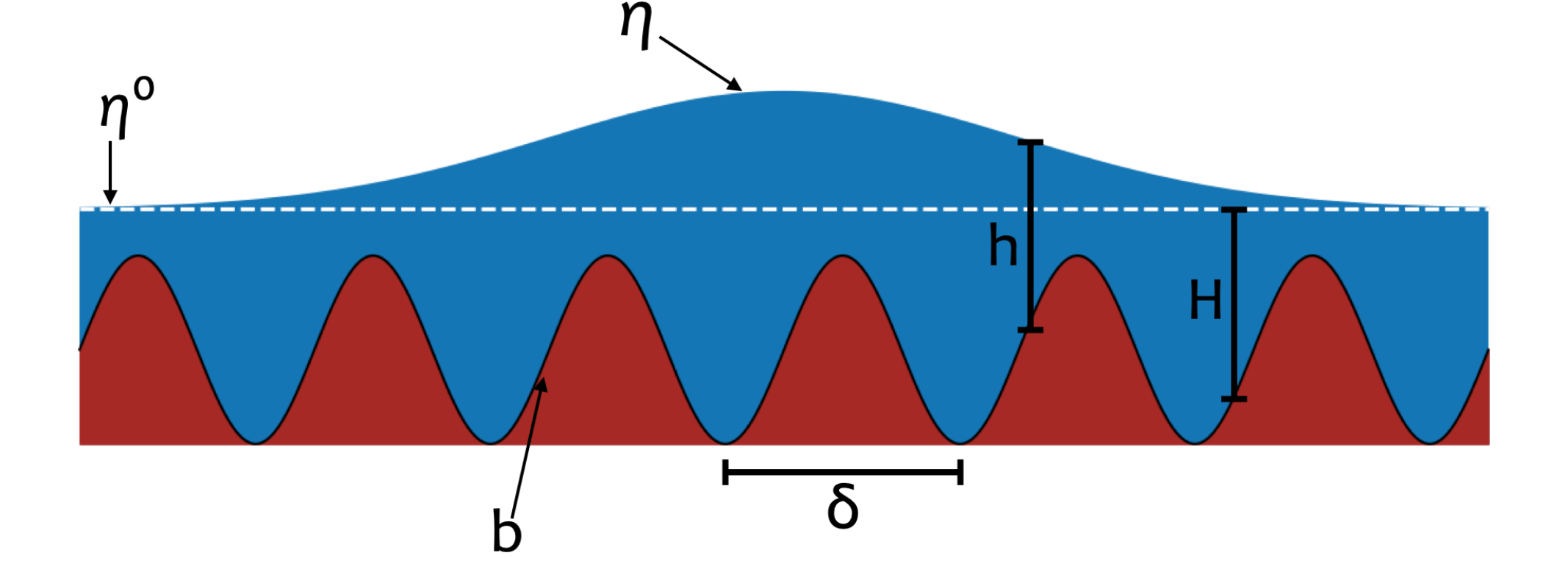}
    \caption{Depiction of notational conventions used in this work\label{fig:scenario}}
\end{figure}
Throughout the paper we use SI units for all physical quantities.
In Figure \ref{fig:example1} we show an example of the surprising behavior exhibited by
waves under these conditions.  Here we take $\delta=1$, $\eta^0=0$, and
$$
    b(x) = \begin{cases} -1 & 0 \le x - \lfloor x \rfloor < 1/2,  \\
    -3/10 & 1/2 \le x - \lfloor x \rfloor < 1.
    \end{cases}
$$
The initial velocity $u$ is zero, while the initial water depth $h$ is a Gaussian pulse of amplitude $1.5\times 10^{-2}$, which splits into
two symmetric pulses.  The figures show the evolution of the right-going pulse.
Whereas solutions of \eqref{eq:sw1} (like those
of other first-order hyperbolic systems) generically exhibit wave breaking
after a short time, in this example the initial pulse breaks up into a train of
what are apparently traveling waves.  These appear to be globally attractive and
stable solitary waves, similar to those observed in other hyperbolic systems with
periodic coefficients \cite{leveque2003,2014_cylindrical_solitary_waves,2021_solitary}.
For comparison, we plot in blue the solution obtained with a flat bottom,
which shows the expected behavior of $N$-wave formation and subsequent decay.

\begin{figure}
    \includegraphics[width=\textwidth]{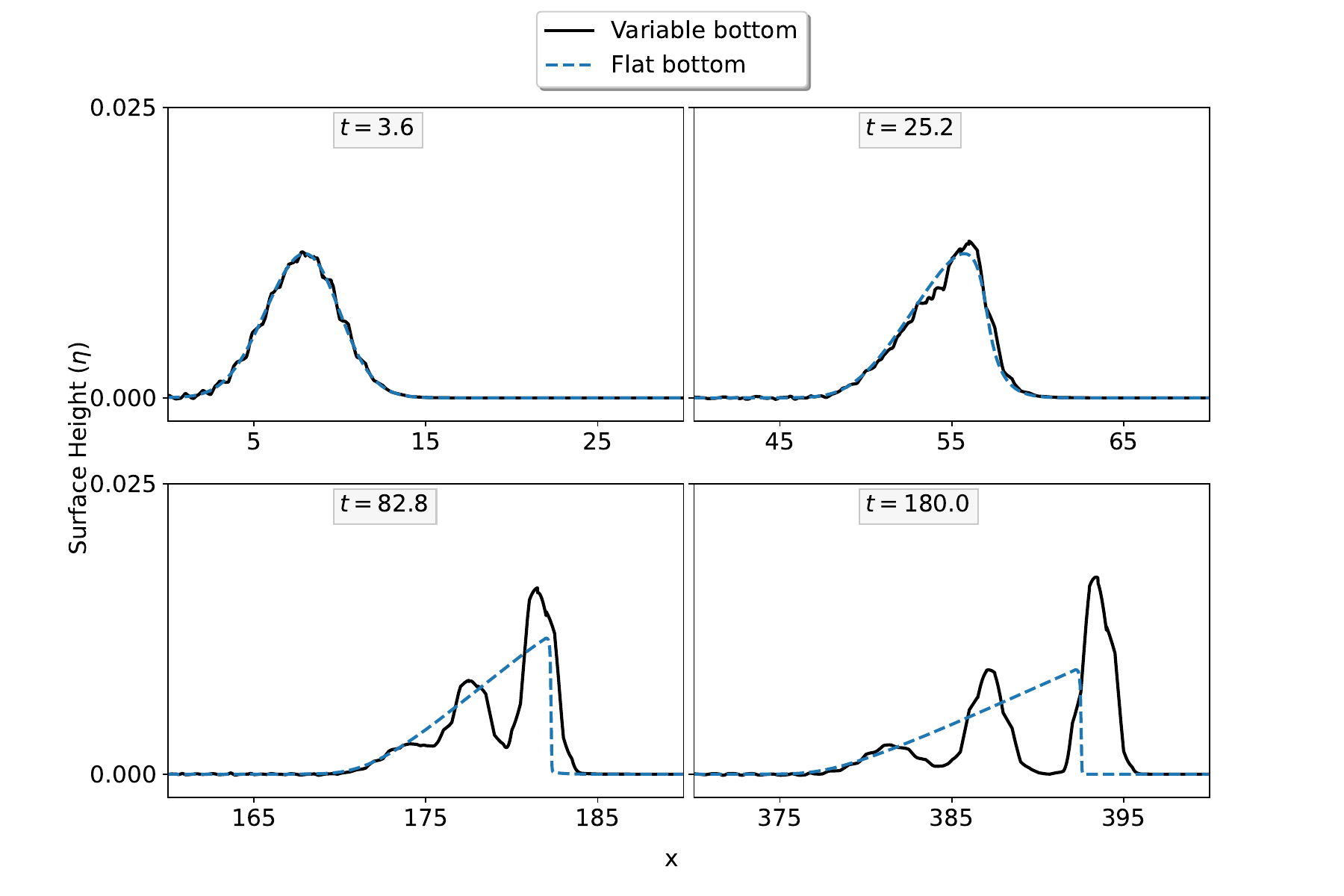}
    \caption{Evolution of an initial Gaussian pulse over periodic bathymetry.
        The surface elevation is shown, measured in meters. 
 For comparison, the dashed blue line shows the solution for flow over a flat bottom.\label{fig:example1}}
\end{figure}

In this work we seek to explain and further understand this behavior.  We
employ a multiple-scale analysis following a technique pioneered by Yong and
coauthors \cite{yong2002,leveque2003}.  This leads to a constant-coefficient
wave equation that includes a dispersive term, indicating that the periodic
bathymetry induces an effective dispersion of long waves, similar to what has
been found for other hyperbolic systems with periodically-varying coefficients
\cite{leveque2003}.  After deriving this \emph{homogenized} equation in Section
\ref{sec:homog}, we investigate its properties in Section \ref{sec:analysis} and compare
numerical solutions of the first-order and homogenized equations in Section
\ref{sec:numerical}.  We also investigate the properties and dynamics of the
solitary wave solutions appearing in this system. 
%Definitions of the operators $\mean{f}$ and $\fluctint{f}$ we use in our analysis and their elementary properties are summarized in Section \ref{sec:appendix}.

Code to reproduce all time-dependent simulation results in this paper can be found online\footnote{\url{https://github.com/ketch/shallow_water_bathymetry_effective_medium_RR}}.

\subsection{Contributions}
The main new contribution of the present work is the application of multiple-scale homogenization to the shallow water equations, demonstrating that shallow water waves over periodic bathymetry exhibit effective dispersion.  Additional contributions include:
\begin{itemize}
    \item analysis of various forms of the homogenized equations and their stability;
    \item analysis and numerics of traveling wave solutions;
    \item demonstration of how fine-scale structure
    can be recovered using the homogenized equations;
    \item discovery and proof of several new properties of the averaging operators.
\end{itemize}
As mentioned already, we have made all of our code available and we consider this also to be a significant contribution.  In particular, the \textit{Mathematica} notebooks accompanying this work provide a substantial systematization of the homogenization process and will drastically simplify its application to other systems in the future.

The application of Yong--Kevorkian homogenization \cite{yong2002, yong2002initial} to the shallow water equations is, to some degree, mechanical.  However, it can be carried out only after finding an appropriate form of the equations from which to start; this choice may seem obvious in hindsight but was far from trivial.  Furthermore, the \emph{mechanical} homogenization process is nevertheless extremely challenging to carry out---both for the human and the computer involved.  It would be impossible without the discovery and application of the new averaging operator properties described in the Appendix; see Table \ref{tbl:terms} and the accompanying discussion.

\section{Multiple-scale analysis}\label{sec:homog}
We begin by rewriting the shallow water system \eqref{eq:sw1} in terms of 
the surface elevation $\eta = h+b$ and the discharge $q=hu$:
\begin{subequations} \label{eq:eta-q}
\begin{align}
    \eta_t + q_x & = 0 \label{eq:2a} \\
    q_t + \left( \frac{q^2}{\eta-b}\right)_x + g(\eta-b)\eta_x & = 0. \label{eq:2b}
\end{align}
\end{subequations}
We will exploit a separation of spatial scales, and
the advantage of working with $q$ instead of $u$ is that the velocity
$u$ tends to vary as rapidly (in space) as $b$, whereas the discharge $q$ can be
slowly-varying even when $b$ is rapidly-varying.  
In the derivation of the equations we shall act formally, assuming sufficient regularity of the functions. 
Furthermore, the solutions in which we are most interested appear not to contain shocks, as we will see.  For this reason it is not essential to work with a conservative version of the equations.

We introduce a small parameter $\delta$ and fast spatial scale $y=x/\delta$.  
The two spatial scales are treated formally as independent variables, so that 
\begin{align} \label{eq:dtrans}
\frac{\partial}{\partial x} \to \frac{\partial}{\partial x} + \delta^{-1} \frac{\partial}{\partial y}.
\end{align}
We assume there exists a power series expansion for each in terms of $\delta$:
\begin{subequations} \label{delta-series}
\begin{align}
    \eta & = \eta^0(x,t) + \delta \eta^1(x,t,y) + \delta^2 \eta^2(x,t,y) + \cdots \\
    q & = q^0(x,t) + \delta q^1(x,t,y) + \delta^2 q^2(x,t,y) + \cdots
\end{align}
\end{subequations}
Here superscripts on $\eta$ and $q$ are indices, while superscripts on other quantities are exponents.  We follow this convention through the remainder of Section \ref{sec:homog}.
When powers of these quantities are needed, we will use parentheses; e.g., $(q^1)^2$.
All functions appearing in \eqref{delta-series} %\lajos{"All" here is too general. Where? In the paper, in this section, in the above expansion, 
%or just the coefficients depending on $y$?} 
are assumed to be periodic in $y$, with period 1, 
so that the integral of any
$y$-derivative over one period must vanish.

As we will see below, it turns out that $\eta^0$ is constant.
It is therefore convenient to introduce the function $$H(y) := \eta^0-b(y).$$
We assume throughout the paper that $H(y)>0$
and note that $b'(y)=-H'(y)$;
see Figure \ref{fig:scenario}.

Next we make the above substitutions in \eqref{eq:eta-q} and collect terms for each power of $\delta$.
We immediately see that there is only one term proportional
to $\delta^{-1}$, coming from equation \eqref{eq:2b}:
$$
    \delta^{-1} \frac{(q^0)^2 H'(y)}{(\eta-b)^2}.
$$
%This is the only term in \eqref{eq:2b} of order $\delta^{-1}$, so it cannot be balanced by any other term.  
This implies that $q^0=0$, i.e.,
$q=\order(\delta)$.
%As we will see below, the waves we are interested in occur as perturbations to a flat-surface still
%water state, so that $q^0=0$ and $\eta^0(x,t)=\eta^0$.  With these simplifications, we obtain
Taking this into account, the expansion of \eqref{eq:eta-q} takes the form
\begin{subequations} \label{initial-expansion}
\begin{align}
    \eta^0_t + q^1_y + \delta \left(q^2_y + q^1_x + \eta^1_t\right)
          + \delta^2 \left(q^3_y + q^2_x + \eta^2_t \right) & =  \order(\delta^3) \\
        g \left(\eta^0_x + \eta^1_y\right)H + \delta \Big( q^1_t + 2q^1 q^1_y H^{-1} + g((\eta^0_x + \eta^1_y)\eta^1 + (\eta^1_x +\eta^2_y)H) - (q^1)^2 H' H^{-2} \Big) \nonumber \\
        + \delta^2 \Big(2q^1 q^1_x H^{-2}+ q^2_t + 2((q^2 - q^1 \eta^1 H^{-1})q^1_y + q^1 q^2_y)H^{-1}  - (q^1)^2 \eta^1_y H^{-2}  \nonumber \\
        + g( (\eta^0_x + \eta^1_y)\eta^2 + (\eta^1_x+\eta^2_y)\eta^1+(\eta^2_x + \eta^3_y)H)-2 (q^2 - q^1 \eta^1 H^{-1})q^1 H'H^{-2} \Big) & = \order(\delta^3).
\end{align}
\end{subequations}
Here to save space we have displayed only the terms up to $\order(\delta^2)$; in what follows we will make use
of these expansions to even higher order.  As we can see, the first equation
has a simple structure, but the second equation has a
rapidly-increasing number of terms at each order.
In what follows, we repeat the following steps
(following a process developed and applied earlier in
\cite{yong2002,yong2002initial,leveque2003}):
\begin{enumerate}
    \item Equate terms of the same order in $\delta$ and solve for the terms with highest index;
    \item Average the resulting equations with respect to $y$;
    \item Integrate to find formulas for the highest-index variables in terms of $y$-averages of lower-index variables and the function $H(y)$.
\end{enumerate}
Eventually we will obtain equations for the $y$-averages of $\eta$ and $q$, by summing the result obtained at each order in step 2.  Step 3 is required in order to allow us to determine the $y$-dependence explicitly at subsequent orders.

\subsection{Averaging operators}\label{sec:averaging}
The following operators will appear frequently in our analysis.  First, the integral (or average) of $f$ over one period, denoted by $\mean{f}\in\mathbb{R}$, is defined as
\[
    \mean{f} := \int_0^1 f(y)\, dy. 
\]
Second, the fluctuating part of the function $f$, denoted by $\{ f \}$, is defined as 
$$\{f\}(y):=f(y)-\mean{f}.$$
Finally, the fluctuating part of the antiderivative of the fluctuating part, denoted by 
$\llbracket f \rrbracket$, is defined for any $y$ as
\[
\fluctint{f}(y):= \left\{\int_0^y \braced{f(\xi)}\, d\xi\right\},
\]
that is,
\begin{equation}\label{[[]]expldef}
    \fluctint{f}(y) = \int_0^y\left\{f\right\}(\xi)d\xi  - \int_0^1 \int_0^\tau \left\{f\right\}(\xi)d\xi d\tau.
\end{equation}
Clearly, we have 
\[
\mean{\{f\}}=0 \quad \quad \text{and} \quad \quad \mean{\fluctint{f}}=0.
\]

Some useful properties of the $\fluctint{\cdot}$ operator, 
which will be used throughout the paper, are provided in Appendix \ref{sec:appendix}.

\begin{rem}
    We will often write $\braket{f}$
even for functions $f$ that are independent of $y$ \emph{a priori}, in order to emphasize which factors do not depend on $y$. 
Also, note that while $\mean{f}$ is $y$-independent, $\{f\}$ and $\fluctint{f}$ depend on $y$.
\end{rem}

\subsection{$\order(\delta^0)$}
We now follow the 3 steps outlined above at each order, starting with terms proportional to $\delta^0$.
Equating these terms and solving for the highest-index terms (step 1) yields
\begin{subequations} \label{order0}
\begin{align}
   -q^1_y & = \eta^0_t \\
   -\eta^1_y & = \eta^0_x.
\end{align}
\end{subequations}
Next (step 2) we integrate the equations above over one
period with respect to $y$.  The left-hand-sides vanish,
since they represent the $y$-integral (over one period)
of the $y$-derivative of a periodic function.
Thus we have $\eta^0_t = \eta^0_x = 0$, so $\eta^0$ is
a constant.

In this case we can skip step 3, since we see immediately that
\begin{align*}
    q^1(x,t,y) & = \avg{q^1(x,t)} \\
    \eta^1(x,t,y) & = \avg{\eta^1(x,t)}.
\end{align*}
Thus the expansion \eqref{delta-series} simplifies to
\begin{subequations}
\begin{align}
    \eta & = \eta^0 + \delta \eta^1(x,t) + \delta^2 \eta^2(x,t,y) + \cdots \\
    q & = \delta q^1(x,t) + \delta^2 q^2(x,t,y) + \cdots
\end{align}
\end{subequations}
We see that the waves in which we are interested occur
as perturbations to a flat surface, still water state.

\subsection{$\order(\delta^1)$}
%From here on, we use $\braket{\cdot}$ to denote
%quantities that are averaged with respect to $y$ (including to denote functions that are independent of $y$); see also Section \ref{sec:appendix}.
We proceed to follow the same steps, focusing on the next-order terms.
Collecting terms proportional to $\delta^1$ and solving for the highest-index terms (step 1) gives
\begin{subequations} \label{order1}
%\begin{align}
%    -q^2_y & = \avg{\eta^1_t} + \avg{q^1_x} \label{dq2dy} \\
%    -\eta^2_y & = \avg{\eta^1_x} + \frac{\avg{q^1_t}}{gH} - \frac{\avg{q^1}^2 H'}{gH^3}. \label{deta2dy}
%\end{align}
%\end{subequations}
\begin{align}
    -q^2_y & = \eta^1_t + {q^1_x} \label{dq2dy} \\
    -\eta^2_y & = {\eta^1_x} + \frac{{q^1_t}}{gH} - \frac{(q^1)^2 H'}{gH^3}. \label{deta2dy}
\end{align}
\end{subequations}
Note that the additional terms appearing in \eqref{initial-expansion} vanish because 
$\eta^0_x=0$ and $q^1$ and $\eta^1$ do not depend on $y$.
 Next (step 2) we integrate both sides of the above equations over one period in $y$.  The integral of the left hand side vanishes since all functions are periodic in $y$:
%oall functions must be periodic in $y$, the average of the RHS of \eqref{order1} must vanish:
\begin{subequations} \label{order1mean}
\begin{align}
    \avg{\eta^1_t} + \avg{q^1_x} & = 0 \label{order1a} \\ 
    \avg{\eta^1_x} + \frac{\mean{H^{-1}}}{g} \avg{q^1_t} - \frac{\mean{H'H^{-3}}}{g} \avg{q^1}^2 & = 0. \label{order1b} 
\end{align}
\end{subequations}
%From \eqref{order1a} and the first equation of \eqref{order1} we deduce that $q^2$ does not depend on $y$.
Note that  $\eta^1 = \avg{\eta^1}$, $q^1 = \avg{q^1}$ and $\avg{\cdot}$ commutes with space and time derivatives,
so $\eta^1_t = \avg{\eta^1_t}$, $q^1_x = \avg{q^1_x}$,
%\lajos{something is wrong here, $q1$?} 
% Fixed, thanks!
%\lajos{In the step below "Subtracting \eqref{order1} from \eqref{order1mean}" you also use 
%$\eta^1_x = \avg{\eta^1_x}$ and $q^1_t = \avg{q^1_t}$ so it would be useful to make it explicit at least 
%here at the very beginning of the process}
%\giovanni{Thanks! I rephrased the sentence in a more logical way }
and that 
Eq.~\eqref{order1b} can be simplified since $\mean{H' H^{-3}}=0$ 
as it is the average of the derivative of a periodic function (see property \eqref{FTC} in the Appendix).  
So system \eqref{order1mean} simplifies to
\begin{subequations} \label{order1meansimp}
\begin{align}
    \avg{\eta^1_t} + \avg{q^1_x} & = 0 \label{order1meansimp-a} \\ 
    \avg{q^1_t} + \frac{g}{\mean{H^{-1}}} \avg{\eta^1_x} & = 0. \label{order1meansimp-b}
\end{align}
\end{subequations}
This is simply the linear wave equation, 
%\lajos{so would not it be more intuitive to bring the partial 
%derivatives outside the means in \eqref{order1meansimp}? to make it look more like a PDE, e.g., $\avg{\eta^1}_t + 
%\avg{q^1}_x=0$. Or, even better, to phrase it this way: "This is simply the linear wave equation in the variables 
%$\avg{\eta^1}$ and $\avg{q^1}$"} 
with sound speed
%\giovanni{I agree with Lajos: either form is fine to me. David, what do you think?}
%\david{We should be consistent about this throughout the paper, so if we change it we need to change it everywhere.  This will require a lot of careful rewriting and might introduce errors.  For me, the current approach is slightly better since otherwise people might confuse e.g. $\avg{q^2}_x$ with the derivative of q squared.}
%\lajos{OK, then the current formalism is fine, please delete this comment block.}
$$
    c = \sqrt{g/\mean{H^{-1}}},
$$
which is an averaged version of the usual characteristic speed $\sqrt{gh}$ for gravity waves
in shallow water.

Finally we carry out step 3.  Subtracting \eqref{order1} from \eqref{order1mean} yields 
\begin{align*}
    q^2_y & = 0 \\
    \eta^2_y & = -\frac{\braced{H^{-1}}}{g} \avg{q^1_t} + \frac{\braced{H' H^{-3}}}{g}\avg{q^1}^2.
\end{align*}
Next we integrate with respect to $y$, obtaining
\begin{align*}
    \int_0^y q^2_s(x,t,s)ds & = q^2(x,t,y) - q^2(x,t,0) = 0 \implies q^2 = \avg{q^2}
\end{align*}
and
\[
    \int_0^y \eta^2_s(x,t,s)ds  = \int_0^y \left(-\frac{\braced{H(s)^{-1}}}{g} \avg{q^1_t} + \frac{\braced{H'(s) H(s)^{-3}}}{g}\avg{q^1}^2\right) ds;
\]
therefore 
\[
    \eta^2(x,t,y) = \eta^2(x,t,0) + \int_0^y \left(-\frac{\braced{H(s)^{-1}}}{g} \avg{q^1_t} + \frac{\braced{H'(s) H(s)^{-3}}}{g}\avg{q^1}^2\right) ds. 
\]
Subtracting the mean we obtain 
\[
    \eta^2  = \avg{\eta^2} - \frac{\fluctint{H^{-1}}}{g} \avg{q^1_t} + \frac{\fluctint{H'H^{-3}}}{g} \avg{q^1}^2.
\]
%In the last line we have used the fact that $\fluctint{f}=\int_0^y\braced{f(s)}ds + C$, where the constant
%of integration is chosen so that $\avg{\fluctint{f}}=0$;
%notice that  averaging both sides of the last equation gives simply $\avg{\eta^2} = \avg{\eta^2}$.
In summary we have obtained (also simplifying the last term a bit more)
\begin{subequations} \label{order2pre}
\begin{align}
q^2 & = \avg{q^2} \\
\eta^2 & = \avg{\eta^2} - \frac{\fluctint{H^{-1}}}{g} \avg{q^1_t} + \frac{\fluctint{H'H^{-3}}}{g} \avg{q^1}^2 = \avg{\eta^2} - \frac{\fluctint{H^{-1}}}{g} \avg{q^1_t} - \frac{\braced{H^{-2}}}{2g}\avg{q^1}^2 \label{eta2}.
\end{align}
\end{subequations}
From \eqref{order2pre} we see that $\eta^2$ depends on the fast scale $y$, while $q^2$ does not.

\subsection{$\order(\delta^2)$}
The $\order(\delta^2)$ equations are
\begin{subequations} \label{order2}
\begin{align}
    -q^3_y & = \eta^2_t + \avg{q^2_x} \\
    -\eta^3_y & = \eta^2_x + H^{-1} \avg{\eta^1}(\avg{\eta^1_x} + \eta^2_y) \\ \nonumber
              & + g^{-1}\left( 2H' H^{-4} \avg{\eta^1} \  \avg{q^1}^2 +H^{-1} \avg{q^2_t} - 2H'H^{-3}\avg{q^1} \ \avg{q^2} + 2H^{-2} \avg{q^1} \ \avg{q^1_x}    \right).
\end{align}
\end{subequations}
Next we want to write the right-hand side of \eqref{order2} in terms of only $H(y)$ and quantities that are independent of $y$.  Therefore we use \eqref{eta2} and \eqref{deta2dy} to replace $\eta^2$ and $\eta^2_y$, obtaining
\begin{subequations} \label{order2subs}
\begin{align}
    -q^3_y & = \avg{\eta^2_t} - \frac{\fluctint{H^{-1}}}{g} \avg{q^1_{tt}} - \frac{\braced{H^{-2}}}{2g} (\avg{q^1}^2)_t + \avg{q^2_x} \\
    -g\eta^3_y & = g\avg{\eta^2_x} - \fluctint{H^{-1}} \avg{q^1_{tx}} - \frac{\braced{H^{-2}}}{2}(\avg{q^1}^2)_x
    - H^{-2} \avg{\eta^1} \left(\avg{q^1_t} - \frac{\avg{q^1}^2 H'}{H^2}\right) \nonumber \\
        & \ \ \ + 2H' H^{-4} \avg{\eta^1} \ \avg{q^1}^2 +H^{-1} \avg{q^2_t} - 2H'H^{-3}\avg{q^1} \ \avg{q^2} + 2H^{-2} \avg{q^1} \ \avg{q^1_x} \label{12b} \\
    & = g\avg{\eta^2_x} - \fluctint{H^{-1}} \avg{q^1_{tx}} - \frac{\braced{H^{-2}}}{2} (\avg{q^1}^2)_x
    - \frac{\avg{\eta^1}\avg{q^1_t}}{H^2} + 3\frac{\avg{\eta^1} \ \avg{q^1}^2 H'}{H^4} \nonumber \\
        & \ \ \ + H^{-1} \avg{q^2_t} - 2H'H^{-3}\avg{q^1} \ \avg{q^2} + 2H^{-2} \avg{q^1} \ \avg{q^1_x}. \label{12c} 
\end{align}
\end{subequations}

Next we integrate these equations over one period with respect to $y$. Several terms vanish due to the facts that $\mean{\fluctint{f}}=0$ by definition and property \eqref{FTC}. %$\mean{H'\phi(H)}=0$.  
The equations simplify to
\begin{subequations} \label{order2mean}
\begin{align}
\avg{\eta^2_t} + \avg{q^2_x} & = 0 \\
\avg{q^2_t} + \frac{g}{\mean{H^{-1}}} \avg{\eta^2_x} - \frac{\mean{H^{-2}}}{\mean{H^{-1}}} \ \avg{\eta^1} \ \avg{q^1_t} +  2 \frac{\mean{H^{-2}}}{\mean{H^{-1}}}  \avg{q^1} \ \avg{q^1_x} & = 0. \label{order2mean-mom}
%\avg{\eta^2_x} - g^{-1}\mean{H^{-2}} \ \avg{\eta^1} \ \avg{q^1_t} + g^{-1}\mean{H^{-1}} \ \avg{q^2_t} + 2g^{-1}\mean{H^{-2}} \avg{q^1} \avg{q^1_x} & = 0.
\end{align}
\end{subequations}
To obtain \eqref{order2mean-mom} we have additionally divided by $\mean{H^{-1}}$, as this form will be convenient later.
Subtracting \eqref{order2subs} from \eqref{order2mean} (but without dividing by $\mean{H^{-1}}$ in \eqref{order2mean-mom}) and integrating from zero to $y$ gives
\begin{subequations} \label{q3eta3}
\begin{align}
q^3(x,t,y) & = \avg{q^3(x,t)} + \frac{\fluctint{\fluctint{H^{-1}}}}{g} \avg{q^1_{tt}} + \frac{\fluctint{H^{-2}}}{2g} (\avg{q^1}^2)_t \\
g\eta^3(x,t,y) & = g\avg{\eta^3(x,t)} + \fluctint{\fluctint{H^{-1}}}\avg{q^1_{tx}} + \frac{1}{2} \fluctint{H^{-2}} (\avg{q^1}^2)_x
    + \fluctint{H^{-2}}\avg{\eta^1} \ \avg{q^1_t} + \braced{H^{-3}}\avg{\eta^1} \ \avg{q^1}^2 \nonumber \\
    & \ \ - \fluctint{H^{-1}}\avg{q^2_t} - \braced{H^{-2}}\avg{q^1} \ \avg{q^2} - 2\fluctint{H^{-2}} \avg{q^1} \ \avg{q^1_x}.
    \label{eta3}
\end{align}
\end{subequations}

\subsection{$\order(\delta^2)$ governing equations}
Adding $\delta$ times \eqref{order1meansimp} with $\delta^2$ times \eqref{order2mean}, we obtain
the approximations
\begin{subequations} \label{o2avg}
\begin{align}
    \delta (\avg{\eta^1} + \delta \avg{\eta^2})_t + \delta (\avg{q^1} + \delta\avg{q^2})_x & = \order(\delta^3) \\
    \frac{\mean{H^{-1}}}{g}\delta (\avg{q^1} + \delta \avg{q^2})_t + \delta (\avg{\eta^1} + \delta \avg{\eta^2})_x
    - \delta^2 \frac{\mean{H^{-2}}}{g}\left(\avg{\eta^1} \avg{q^1_t} - (\avg{q^1}^2)_x\right)  & =\order(\delta^3).
\end{align}
\end{subequations}
%Using the relation $q^1_t = -\eta^1_x/\mean{H^{-1}} + \order(\delta)$ and 
Defining the averaged variables
\begin{subequations}
\begin{align}
    \heta & = \avg{\eta^1} + \delta \avg{\eta^2} + \dots \\
    \hq  & = \avg{q^1} + \delta \avg{q^2} + \dots,
\end{align}
\end{subequations}
we can write \eqref{o2avg} as
\begin{subequations} \label{o2avg-combined}
\begin{align}
    \delta(\heta_t + \hq_x) & = \order(\delta^3) \\
    \delta \left(\frac{\mean{H^{-1}}}{g}\hq_t + \heta_x \right) + \delta^2\frac{\mean{H^{-2}}}{g} \left( - \heta \ \hq_t + (\hq^2)_x \right) & = \order(\delta^3).
\end{align}
\end{subequations}
Note that superscripts on $\heta$ and $\hq$, denote exponents.  Indeed, as we will not need to work directly with the expansion \eqref{delta-series} any more,
all superscripts in the rest of the paper are exponents.
We see that $\heta, \hq$ approximately satisfy a hyperbolic equation with weak %($\order(\delta)$)
nonlinearity.
Using $\hq_t = -c^2 \heta_x + \order(\delta)$,
the last equation can be rewritten as
$$
    \delta ( \hq_t + c^2 \heta_x) + \delta^2
    \left( \frac{\mean{H^{-2}}}{\mean{H^{-1}}} (\hq^2)_x + g \frac{\mean{H^{-2}}}{\mean{H^{-1}}^2} \heta \ \heta_x
    \right) = \order(\delta^3),
$$
which is an averaged version of the original equation \eqref{eq:2b}.

\subsection{Higher-order homogenized equations}
Following a similar (though increasingly cumbersome) procedure, we analyze the terms
proportional to $\delta^3, \delta^4$, and $\delta^5$.  The details are not
included here as the expressions become increasingly lengthy.
The number of terms appearing in the equations at each order (before simplification) is listed in Table \ref{tbl:terms}.
We have used Wolfram \textit{Mathematica} to perform the calculations, and the notebook that contains them is
included in the supplementary material.
Properties of the operator $\fluctint{\cdot}$
proved in Appendix \ref{sec:appendix} are essential in simplifying the lengthy expressions that arise.
%To save space, here we write out only the terms obtained up to  order $\delta^4$:
Summing the terms up to  $\order(\delta^5)$, we obtain the following equations
\begin{subequations} \label{o4avg-ttt}
\begin{align}
    \delta (\heta_t + \hq_x) & = \order(\delta^6) \\
    \delta (\hq_t + c^2 \heta_x) + \delta^2
    \frac{\mean{H^{-2}}}{\mean{H^{-1}}} \left( (\hq^2)_x - \heta \ \hq_t \right) \nonumber \\
    + \delta^3 \left( -\frac{\mu}{c^2} \hq_{ttt}  - \frac{\alpha_2}{c^2} \hq^2 \hq_t - \frac{\Hm{-3}}{\Hm{-1}}\heta \left(2 (\hq^2)_x - \heta \ \hq_t \right) \right) \nonumber \\
%    + \delta^3 \left( \frac{\alpha_4}{g} \hq^3 \hq_x + \alpha_5 \heta^2 \hq \ \hq_x + \frac{\alpha_6}{g} \hq^2 \heta \ \hq_t
%    + \alpha_7 \heta^3 \hq_t + 4 \frac{C_{11}}{g\mean{ H^{-1}}}\hq_x \hq_{tt} - 2\frac{C_{11}}{g\mean{H^{-1}}}  \heta  \ \hq_{ttt} \right) & = \order(\delta^4),
    + \delta^4 \left( \frac{\hat{\alpha}_4}{c^2} \hq^3 \hq_x  + \frac{\hat{\alpha}_6}{c^2} \hq^2 \heta \ \hq_t
    - 2 \frac{\gamma}{c^2}\left( 2 \hq_x \hq_{tt} - \heta  \ \hq_{ttt}\right) + \frac{\Hm{-4}}{\Hm{-1}}\heta^2\left(3 (\hq^2)_x - \heta \ \hq_t \right) \right) \\
    + \delta^5 (\frac{\nu_1}{c^4} \hq_{ttttt} + \frac{\nu_2}{c^2} \hq_{xxttt} + \hat{F}(\heta,\hq)) & = \order(\delta^6),
\end{align}
\end{subequations}
where $\hat{F}(\heta,\hq)$ is a function that depends on $\heta$,
$\hq$ and its derivatives (see Appendix \ref{sec:coefficients}); all terms of $\hat{F}$
are nonlinear. 
Expressions for the coefficients of this equation are also given in Appendix \ref{sec:coefficients}.
We have explicitly written out the linear 5th-order terms because these
turn out to have the most significant effect on the solution, and also
affect the linear dispersion relation as discussed below.
It can be shown that $\alpha_1<0$, $\alpha_2< 0$, and $\alpha_3\leq 0$ 
%\lajos{actually, $\alpha_2< 0$} 
%\giovanni{Correct. It was a leftover from previous version} 
(see Remark \ref{rem:alpha2}), and also that 
$\mu>0$ (see \eqref{mu}).

\begin{table}
\center
\begin{tabular}{c|rr}
Order $\delta^p$ & $q^p_y$ terms & $\eta^p_y$ terms \\ \hline
$p=1$ & 2 & 3 \\
$p=2$ & 5 & 8 \\
$p=3$ & 14 & 37 \\
$p=4$ & 62 & 138 \\
$p=5$ & 239 & 653 \\
\end{tabular}
\caption{Number of terms appearing in the equations for $q^p_y$ and $\eta^p_y$ as a function
of the order $p$.  Terms are counted after collecting like terms (those with the same product
of $q$, $\eta$, and their derivatives) but before any further simplification.\label{tbl:terms}}
\end{table}

\subsection{Alternative forms of the homogenized equations}
The homogenized system \eqref{o4avg-ttt}, as written, is inconvenient for
two reasons.  First, the equations include high-order time derivatives, whereas the shallow water equations we started from
are first-order in time.
In principle this could be remedied by introducing additional
variables representing time derivatives of the dependent quantities.
However, a second and more serious issue is that these equations exhibit a linear instability at low wavenumbers, as explained in Section \ref{sec:analysis}.
 One way to remedy this is to convert all higher-order
time derivatives to space derivatives, as was done in \cite{leveque2003}.
%Finally, we replace the time derivatives in the nonlinear and dispersive terms with space derivatives, using \eqref{o3avg-ttt}.  This leads to
%\begin{subequations} \label{o3avg}
%\begin{align}
%    \heta_t + \hq_x & = \order(\delta^3) \\
%    \hq_t + \frac{g}{\mean{H^{-1}}}\heta_x + \delta
%    \left( g\frac{\mean{H^{-2}}}{\mean{H^{-1}}^2} \heta \ \heta_x +
%    \frac{\mean{H^{-2}}}{\mean{H^{-1}}} (\hq^2)_x
%    \right) \nonumber \\
%    + \delta^2 \left( -\frac{g C_3}{\Hm{-1}^3} \heta_{xxx} + \alpha_1 \hq \ \heta \ \hq_x + \frac{\alpha_2}{\Hm{-1}^2} \hq^2 \heta_x + g \alpha_3 \heta^2 \heta_x \right) & = \order(\delta^3).
%\end{align}
%\end{subequations}
This is accomplished by differentiating the equations and using equality of
mixed partial derivatives, keeping again only terms up to the desired order
in $\delta$:
\begin{subequations} \label{o4avg}
\begin{align}
    \delta(\heta_t + \hq_x) & = \order(\delta^5) \\
    \delta(\hq_t + c^2 \heta_x) + \delta^2
    \frac{\mean{H^{-2}}}{\mean{H^{-1}}}\left( c^2 \heta \ \heta_x + (\hq^2)_x \right) \nonumber \\
    + \delta^3 \left( c^2 \mu \heta_{xxx} + \alpha_1 \hq \ \heta \ \hq_x + \alpha_2 \hq^2 \heta_x + g \alpha_3 \heta^2 \heta_x \right) \nonumber \\
    + \delta^4 \bigg( \frac{\alpha_4}{g} \hq^3 \hq_x + \alpha_5 \heta^2 \hq \ \hq_x + \alpha_6 \hq^2 \heta \ \heta_x
    + g\alpha_7 \heta^3 \heta_x \nonumber \\
    + \hat{\alpha}_8 \hq_x \hq_{xx} + g \hat{\alpha}_9 \heta_x \heta_{xx} + g\hat{\alpha}_{10} \heta \ \heta_{xxx} + \hat{\alpha}_{11} \hq \ \hq_{xxx} \bigg) & = \order(\delta^5),
\end{align}
\end{subequations}
where again coefficients are provided in Appendix \ref{sec:coefficients}.
The system \eqref{o4avg} is linearly stable for small wavenumbers.
However, it unstable for sufficiently large wavenumbers (see Section \ref{sec:analysis}). This problem
is less severe, since we do not expect the homogenized equations to be
valid for large wavenumbers in any case, and solutions of \eqref{o4avg}
agree reasonably well with solutions of the original shallow water equations.

However, it is possible to obtain a system that is linearly stable for
all wavenumbers (and equivalent to the systems above up to the given
order in $\delta$).  This can be accomplished by rewriting the linear term
proportional to $q_{ttt}$ in terms of $q_{xxt}$ (again using equality
of mixed partial derivatives).  It is convenient to write the
nonlinear terms with high-order derivatives in terms of spatial
derivatives only; we are free to do this since they do not affect
the linear dispersion relation. In this way we obtain the system:
\begin{subequations} \label{o4avg-xxt}
\begin{align}
    \delta(\heta_t + \hq_x) & = \order(\delta^6) \\
    \delta\left(\hq_t + c^2 
    %\frac{g}{\mean{H^{-1}}}
    \heta_x\right) + \delta^2
    \frac{\mean{H^{-2}}}{\mean{H^{-1}}}\left( c^2 \heta \ \heta_x + (\hq^2)_x \right) \nonumber \\
    + \delta^3 \left( -\mu \hq_{xxt} + \alpha_1 \hq \ \heta \ \hq_x + \alpha_2 \hq^2 \heta_x + g \alpha_3 \heta^2 \heta_x \right) \nonumber \\
    + \delta^4 \bigg( \frac{\alpha_4}{g} \hq^3 \hq_x + \alpha_5 \heta^2 \hq \ \hq_x + \alpha_6 \hq^2 \heta \ \heta_x
    + g\alpha_7 \heta^3 \heta_x \nonumber \\
    %+ \tilde{\alpha}_8 \hq_x \hq_{xx} + \tilde{\alpha}_9 \heta_x \heta_{xx} + \tilde{\alpha}_{10} \heta \ \heta_{xxx} + \tilde{\alpha}_{11} \hq \ \hq_{xxx} \bigg) & = \order(\delta^5),
    + \alpha_8 \left(2 \hq_x \hq_{xx} + c^2 \heta \ \heta_{xxx} \right)+ \alpha_9 \left( 5 c^2 \heta_x \heta_{xx}  + 2 \hq \ \hq_{xxx}\right) \bigg)  \nonumber \\
    + \delta^5 \left( (\nu_1 + \nu_2 - \mu^2 ) \hq_{xxxxt} + F(\heta,\hq)\right) & = \order(\delta^6),
\end{align}
\end{subequations}
where again coefficients and the details of $F$ are provided in Appendix \ref{sec:coefficients}.

%\subsection{Alternative approach}
%Introducing $u(x,t)=\hq_t, v(x,t)=\hq_{tt}$, we can write 
%\eqref{o4avg-ttt} as the first-order system
%\begin{subequations} \label{o4avg-first-order}
%\begin{align}
%    \heta_t + \hq_x & = \order(\delta^4) \\
%    \hq_t & = u \\
%    u_t & = v \\
%     \delta^2 \frac{C_3- 2 \delta C_{11}\heta}{g\Hm{-1}} v_t + u + \frac{g}{\mean{H^{-1}}}\heta_x + \delta
%    \left( -\frac{\mean{H^{-2}}}{\mean{H^{-1}}} \heta \ u +
%    \frac{\mean{H^{-2}}}{\mean{H^{-1}}} (\hq^2)_x
%    \right) \nonumber \\
%    + \delta^2 \left(  - 4 \frac{\Hm{-3}}{\Hm{-1}} \hq \heta \hq_x - \frac{\alpha_2}{g\Hm{-1}} \hq^2 u + \frac{\Hm{-3}}{\Hm{-1}}\heta^2 u \right) \nonumber \\
%    + \delta^3 \left( \frac{\alpha_4}{g} \hq^3 \hq_x + \alpha_5 \heta^2 \hq \ \hq_x + \frac{\alpha_6}{g} \hq^2 \heta \ u
%    + \alpha_7 \heta^3 u + 4 \frac{C_{11}}{g\mean{ H^{-1}}}\hq_x v  \right) & = \order(\delta^4),
%\end{align}
%\end{subequations}
%For this system we require initial conditions also for
%$u$ and $v$, which can be obtained using equality of mixed partial derivatives:

\section{Properties of the homogenized equations}
\label{sec:analysis}

\subsection{Dispersion relations}
In this section we study the dispersion relation of the systems \eqref{o4avg-ttt}, \eqref{o4avg}, and \eqref{o4avg-xxt}.
For simplicity of notation we drop the average sign on $\eta$ and $q$.

We start by considering the first form of the homogenized system \eqref{o4avg-ttt}, but neglecting the $\order(\delta^5)$ terms.
%It can be written as 
%\begin{subequations}\label{homog-1}
%     \begin{align}
%      \eta_t + q_x & =  0 \\ 
%      a_1(\eta,q) q_t + c^2 \eta_x + b_1(\eta,q) q_x - \mu_1 q_{ttt} & =  0.
%      \end{align}
%\end{subequations}
%where $a_1$ and $b_1$ are functions of $\eta$ and $q$, and depend on the bathymetry, while 
%\begin{equation}\label{eq:mu_1}
%    \mu_1 = -\delta^2 \frac{C_3}{g\Hm{-1}^3}>0.
%\end{equation}
We linearize around the constant state $(\eta_0,q_0) = (0,0)$, 
obtaining the system 
\begin{subequations}\label{homog-1-lin}
     \begin{align}
      \eta_t + q_x & =  0, \\ 
      q_t + c^2 \eta_x - \frac{\muhat}{c^2} q_{ttt} & =  0,
      \end{align}
\end{subequations}
where $\muhat = \delta^2\mu$.
%Notice that even if we include the terms of $\order(\delta^4)$, i.e.\ if we consider system \eqref{o4avg-ttt}, once we linearize around state $(0,0)$ we get the same system \eqref{homog-1-lin}.
Now we look for solutions of system \eqref{homog-1-lin} as a superposition of Fourier modes of the form 
$\eta(x,t) = \hat{\eta} \exp(i(kx-\omega t))$, 
$q(x,t) = \hat{q} \exp(i(kx-\omega t))$, 
 where $i$ denotes the imaginary unit.
Inserting this {\em ansatz\/} in \eqref{homog-1-lin} we obtain:
\begin{subequations}\label{ansatz}
     \begin{align}
     -i\omega \hat{\eta} + i k \hat q & = 0\\
     - i \omega \hat{q} + i k c^2 \hat \eta - i \frac{\muhat}{c^2} \omega^3  \hat{q}& = 0. 
      \end{align}
\end{subequations}
Non-trivial solutions of \eqref{ansatz} are obtained if
the determinant of the coefficient matrix is zero, which is equivalent to
\begin{equation}\label{disp_rel_3_1}
\omega^2 + \frac{\muhat}{c^2} \omega^4 - c^2k^2 = 0.
\end{equation}
This equation is the dispersion relation for the linearized system
\eqref{homog-1-lin}. In order to reduce the number of parameters, we divide this relation by $c^2k^2$, obtaining the dispersion relation in non-dimensional form:
\begin{equation}
    \Omega^2 + K^2\Omega^4 - 1 = 0,
    \label{disp-rel-1_nondim}
\end{equation}
where 
\[
    \Omega = \frac{\omega}{ck}, \quad K = k \sqrt{\muhat}.
\]
Note that $\muhat>0$ so that $K$ is real.
System \eqref{homog-1-lin} requires four initial conditions, namely one has to assign 
$\eta(x,0), q(x,0), q_t(x,0),q_{tt}(x,0)$, which correspond to four initial values for each Fourier mode. This means that for each wave number $k$ there will be four linearly-independent solutions, one for each solution of the biquadratic equation 
\eqref{disp-rel-1_nondim}. The four roots are given by the two equations 
\[
    \Omega^2 = Z_+, \quad \Omega^2 = Z_-
\]
where 
\[
    Z_{\pm}  = \frac{-1\pm \sqrt{1+4K^2}}{2K^2}
\]
therefore  the two roots of $\Omega^2 = Z_+$ 
will be real and the two roots of
$\Omega^2=Z_-$ 
will be imaginary conjugate. 
%\lajos{maybe complex conjugate? because "imaginary conjugate" suggests being purely imaginary which seems not the case}
%\giovanni{That is precisely what I meant. I hope it is more clear now}
The existence of a root with positive coefficient of the imaginary part indicates that the initial value problem for system \eqref{homog-1-lin} is ill-posed. 
For this reason the homogenized systems in the form  
\eqref{o4avg-ttt} are not very useful for any practical purpose. 

We now consider the second form of homogenized system, namely \eqref{o4avg}.
%Neglecting terms of order $\order(\delta^3)$ the system can be written in the form 
%\begin{subequations}\label{homog-2}
%     \begin{align}
%      \eta_t + q_x & =  0 \\ 
%      q_t + a_2(\eta,q)\eta_x + b_2(\eta,q) q_x + \mu_2 \eta_{xxx} & =  0.
%      \end{align}
%\end{subequations}
%where $a_2$ and $b_2$ are functions of $\eta$ and $q$ and depend on the bathymetry.
Linearizing around $(\eta_0,q_0) = (0,0)$ we obtain the system 
\begin{subequations}\label{homog-2-lin}
     \begin{align}
      \eta_t + q_x & =  0, \\ 
      q_t + c^2 \eta_x + c^2 \muhat \eta_{xxx} & =  0.
      \end{align}
\end{subequations}
Using the same {\em ansatz\/} as before, i.e., $\eta(x,t) = \hat{\eta} \exp(i(kx-\omega t))$, inserting it in system 
\eqref{homog-2-lin}, and looking for non-trivial solutions gives the following dispersion relation
\[
    \omega^2-c^2k^2+\muhat c^2 k^4 = 0,
\]
or in non-dimensional form
\begin{equation}
    \Omega^2+K^2 = 1.
    \label{disp-rel-2_nondim}
\end{equation}
Solving for $\Omega$ we find
\[
    \Omega_\pm = \pm \sqrt{1-K^2}.
\]
This means that the dispersion relation corresponding to the second form of the system allows only bounded wavenumbers: the non-dimensional form requires $|K|\leq 1$, which corresponds to 
\[
    |k|\leq k_{\rm max} \equiv \frac{1}{\delta\sqrt{\mu}}.
\]
Since we do not expect the homogenized approximation to be accurate for such large wavenumbers, it is possible to work with
this form of the equations, as is done for
instance in \cite{leveque2003}.
However, we prefer the third form of
the equations, given in \eqref{o4avg-xxt}
for reasons that will soon be apparent.

The third form of the homogenized model equation is reported in \eqref{o4avg-xxt}.
%Neglecting terms of $\order(\delta^3)$, the system takes the form
%\begin{subequations}\label{homog-3}
%     \begin{align}
%      \eta_t + q_x & =  0 \\ 
%      q_t + a_3(\eta,q)\eta_x + b_3(\eta,q) q_x - \mu_3 q_{xxt} & =  0.
%      \end{align}
%\end{subequations}
%where $a_3$ and $b_3$ are functions of $\eta$ and $q$ and depend on the bathymetry.
Once again neglecting terms of $\order(\delta^5)$ and
linearizing around $(\eta_0,q_0) = (0,0)$, we obtain
\begin{subequations}\label{homog-3-lin}
     \begin{align}
      \eta_t + q_x & =  0, \\ 
      q_t + c^2 \eta_x - \muhat q_{xxt} & =  0.
      \end{align}
\end{subequations}
Using the same {\em ansatz\/} as before, i.e., $\eta(x,t) = \hat{\eta} \exp(i(kx-\omega t))$, inserting it in system 
\eqref{homog-3-lin}, and looking for non-trivial solutions gives the dispersion relation
\[
    \omega^2-c^2k^2+\muhat\omega^2 k^2 = 0,
\]
which can be written in non-dimensional form after dividing by $c^2k^2$:
\begin{equation}
    \Omega^2(1+K^2) - 1 = 0.
    \label{disp-rel-3_nondim}
\end{equation}
Solving for $\Omega$ one obtains 
\[
    \Omega_\pm = \pm \frac{1}{\sqrt{1+K^2}},
\]
therefore $\forall K\in \mathbb{R}$ we have $\Omega\in\mathbb{R}$, therefore there are no unstable modes or forbidden wave numbers. 

A plot of the dispersion relation corresponding to the positive branches of relations \eqref{disp-rel-2_nondim} and \eqref{disp-rel-3_nondim},
and to the real positive branch of \eqref{disp-rel-1_nondim} is shown in Figure \ref{fig:disp_rel_3}.
\begin{figure}
\begin{center}
    \includegraphics[width=3.5in]{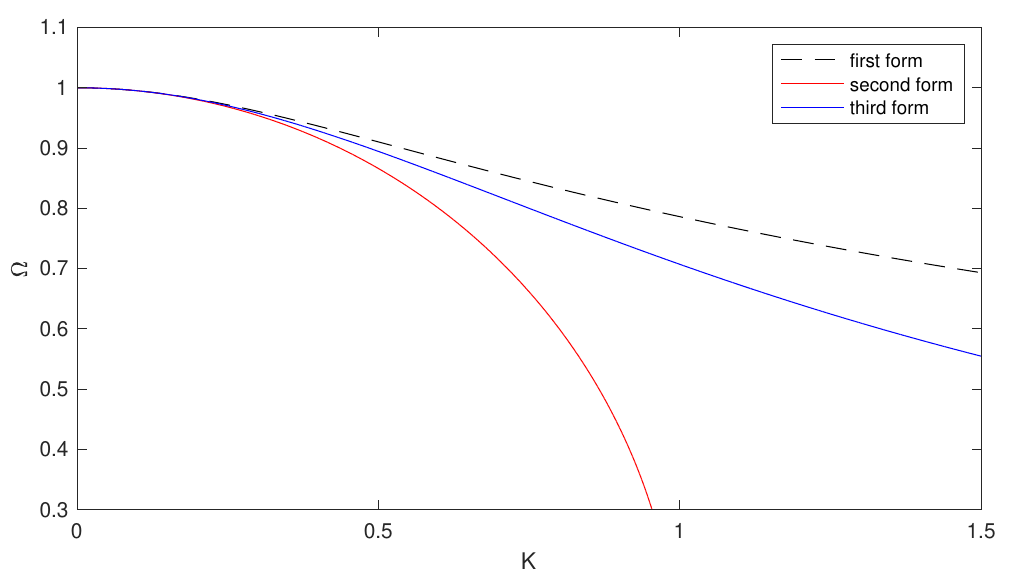}
    \caption{Non-dimensional dispersion relations corresponding to the three forms of the linearized homogenized systems \eqref{o4avg-ttt}, \eqref{o4avg}, and \eqref{o4avg-xxt}. 
    \label{fig:disp_rel_3}}
\end{center}
\end{figure}
Notice that, as expected, for small values of $K$, all branches give
\[
    \Omega = 1-\frac{K^2}{2} + \order(K^4),
\]
which corresponds to the dispersion relation $\omega = ck-\frac12 \muhat c k^3+\order(k^5)$.

Finally, we consider the linearization of system \eqref{o4avg-xxt} \emph{including} the $\order(\delta^5)$ terms. The linearized system reads
\begin{subequations}\label{homog-5-lin}
     \begin{align}
      \eta_t + q_x & =  0, \\ 
      q_t + c^2 \eta_x - \muhat q_{xxt} + (\hat{\nu}^2-\hat{\mu}^2)q_{xxxxt}& =  0,
      \end{align}
\end{subequations}
where $\hat{\nu}^2 = \delta^4(\nu_1+\nu_2)$.
Proceeding as before, we obtain the dispersion relation 
\[
    \omega^2(1+\hat{\mu}k^2+(\hat{\nu}^2-\hat{\mu}^2) k^4)-c^2k^2 = 0.
\]
Using the notation defined above, the relation can be written as 
\[
    \Omega_\pm = \pm \frac{1}{\sqrt{1+K^2+r K^4}}
\]
where $r=(\nu_1+\nu_2)/\mu^2-1$. 
We see that all wavenumbers are stable if
$\nu_1+\nu_2>\mu^2$; this condition is fulfilled in
all examples we have studied, including those shown in
the next section.  Indeed, it can be proved that this condition is satisfied
for any piecewise-constant bathymetry; see Appendix \ref{sec:pwc-coeff}.

% \$\omega^$, the two branches of the dispersion relation are given by
% \begin{equation}\label{dispersion_relation_3_1}
%     \omega_\pm(k) = \frac12\left(b_0 k \pm\sqrt{b_0^2 k^2 + 4 k^2(a_0-\mu k^2)}\right)
% \end{equation}
% \giovanni{I think it could be almost equivalently be written as 
% \begin{equation*}
%     \omega_\pm(k) = \frac k2\left(b_0  \pm\sqrt{b_0^2 + 4 (a_0-\mu k^2)}\right)
% \end{equation*}
% }
% Stable dispersive waves propagate provided $\mu k^2<a_0+(b_0/2)^2$.

\subsection{Traveling wave solutions}
All three model systems, \eqref{o4avg-ttt}, \eqref{o4avg} and \eqref{o4avg-xxt}, admit traveling wave solutions; this is the case whether or not we include the 4th- and 5th-order terms.
Here we describe how to construct periodic solutions of system \eqref{o4avg-xxt}. 
We focus on this form of the equations since it is the most amenable for numerical simulations.

%\cite{knobel2000introduction}.
\subsubsection{Traveling wave to $\order(\delta^3)$}
We start describing how to compute a traveling wave for system \eqref{o4avg-xxt}, 
accounting for terms up to $\order(\delta^3)$. To this purpose we adopt  
a common technique that can be found in textbooks such as \cite{drazin1989solitons}.
We start by rewriting system \eqref{o4avg-xxt} in the form 
\begin{subequations}\label{o3avg-xxt}
     \begin{align}
      \eta_t + q_x & =  0 \\ 
      q_t + c^2\eta_x + \bhat_1 \eta\eta_x + \bhat_2 (q^2)_x - \bhat_3 q\eta q_x - \bhat_4 q^2\eta_x - \bhat_5\eta^2\eta_x - \hat{\mu}\, q_{xxt} & = 0, 
      \end{align}
\end{subequations}
where 
\[
\bhat_2 = \delta\frac{\Hm{-2}}{\Hm{-1}}, \quad \bhat_1 = \bhat_2 c^2, \quad \bhat_3 = -\delta^2\alpha_1,\quad
\bhat_4 = -\delta^2\alpha_2, \quad \bhat_5 = -g\delta^2\alpha_3, \quad \hat{\mu} = \delta^2\mu.
\]
Now we look for a solution that depends only on $\xi\equiv x-Vt$, where $V$ is the desired traveling wave speed. 
Assuming then $\eta = \eta(\xi), \> q = q(\xi)$, the first equation becomes 
\[
     -V \eta' + q'  =  0,
\]
where the prime denotes differentiation with respect to $\xi$. This equation immediately tells us that $q = q_0 + V(\eta-\eta_0)$. %From now on we consider waves that are small perturbations to a constant state. 
%Without loss of generality, we can therefore assume $\eta_0 = 0$ and $q_0 = 0$.
We consider waves propagating on a still, flat-surface background, so that as $|x|\to\infty$ we have $q=0$ and we can choose the vertical reference point so that $\eta=0$ in the unperturbed state.  Thus we take $\eta_0=q_0=0$.
Replacing $q$ by $V\eta$, the second equation becomes:
\begin{equation}\label{o3avg-xxt-travel}
      -V^2\eta' + c^2\eta' + \bhat_1 \eta\eta' + V^2\bhat_2 (\eta^2)' - V^2\bhat_3 \eta^2 \eta' 
      - \bhat_4 V^2\eta^2\eta' - \bhat_5\eta^2\eta' + V^2 \hat{\mu}\, \eta''' = 0.
\end{equation}
Observing that $\eta\eta' = (\eta^2)'/2$, and $\eta^2\eta' = (\eta^3)'/3$, after changing sign to all terms in \eqref{o3avg-xxt-travel}, 
the equation can be written as 
\[
    \frac{d}{d\xi}( \gamma_1\eta -\gamma_2\eta^2+\gamma_3\eta^3 - V^2\hat{\mu}\,\eta'') = 0,
\]
where 
\[
    \gamma_1 = V^2-c^2, \quad \gamma_2 = \frac12\bhat_1 + V^2 \bhat_2, \quad \gamma_3 = \frac13((\bhat_3 + \bhat_4)V^2+\bhat_5).
\]    
We can integrate once to obtain
\begin{equation}\label{eq:Newton}
    \eta'' = F(\eta), \quad \textrm{with} \> F(\eta) \equiv (\gamma_1\eta -\gamma_2\eta^2+\gamma_3\eta^3 + A)/(\hat{\mu}\,V^2),
\end{equation}
where $A$ is an integration constant. 
Again, since $\eta\to 0$ as $|x|\to\infty$, we have $A=0$.
Eq.~\eqref{eq:Newton} can be interpreted as Newton's second law of a particle of unit mass under the action of a positional force $F(\eta)$.
Here $\eta$ plays the role of the particle position and $\xi$ represents the ``time''. 
This equation admits a first integral, which plays the role of the total energy:
multiplying Eq.~\eqref{eq:Newton} by $\eta'$ and integrating once more one obtains
\begin{equation}\label{eq:energy}
    \frac12(\eta')^2+U(\eta) = E
\end{equation}
with 
\begin{equation}\label{eq:potential}
    U(\xi) = \left(-\frac12\gamma_1\eta^2+\frac13\gamma_2\eta^3-\frac14\gamma_4\eta^4\right)/(\hat{\mu}\,V^2). 
\end{equation}
A typical shape of the potential \eqref{eq:potential} corresponding to $\gamma_i>0, i=1,\ldots,3$ is shown 
in the top-left panel of %\lajos{I think this reference is incorrect, there is no such figure} 
%\giovanni{Fixed, thank you!}
Fig.~\ref{fig:potential}, where the traveling waves 
are obtained from the dynamics of a point that moves in a potential well. Level set curves of constant ``energy'' are represented in the bottom-left panel. 
\begin{figure}
    \begin{center}
    \includegraphics[width=2.5in]{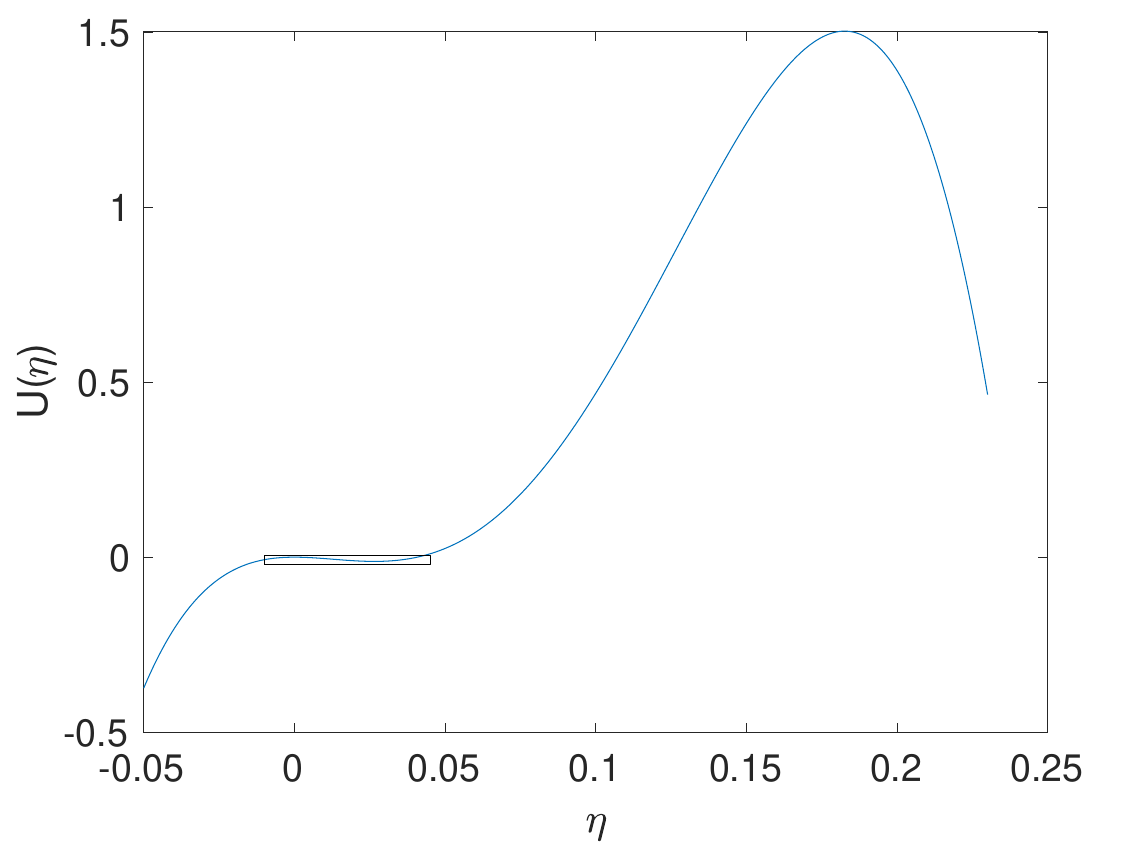}
    \includegraphics[width=2.5in]{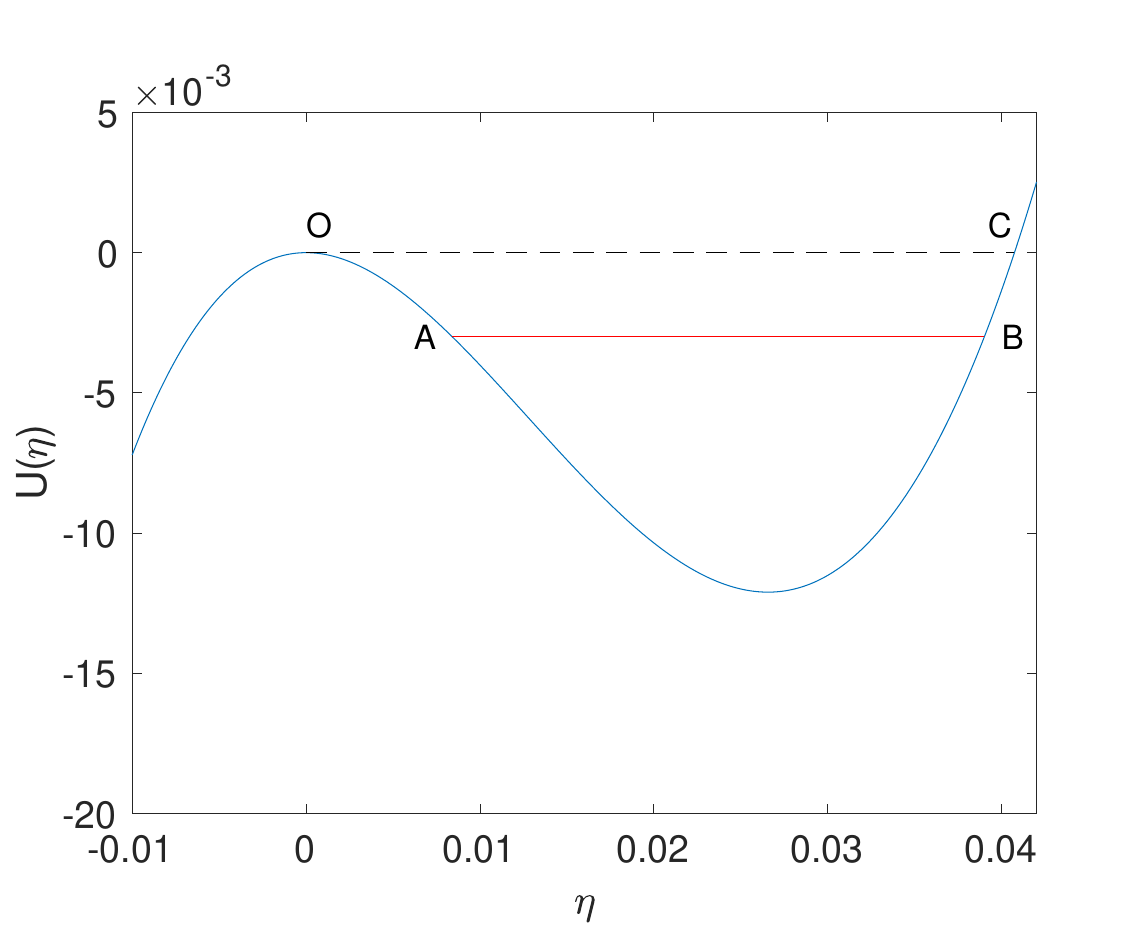}
    \includegraphics[width=2.5in]{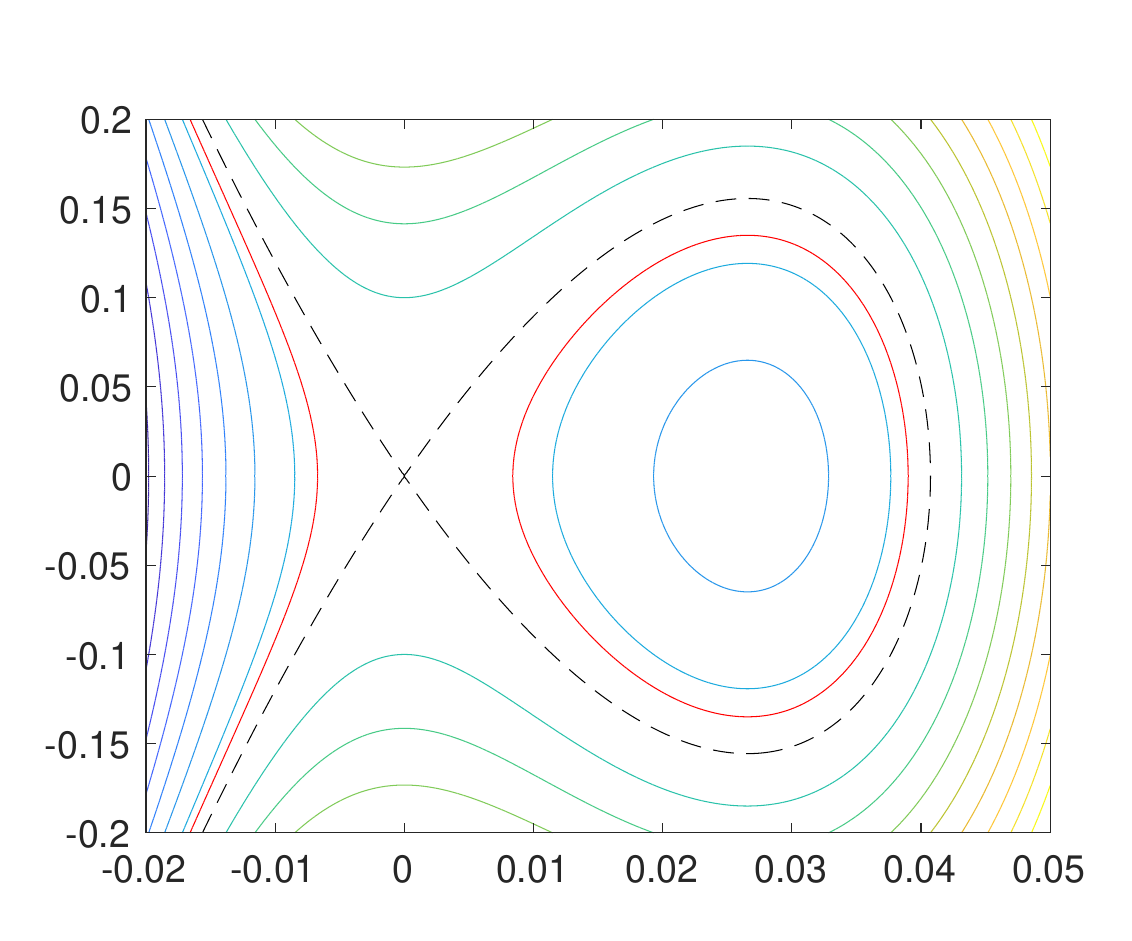}
    \includegraphics[width=2.5in]{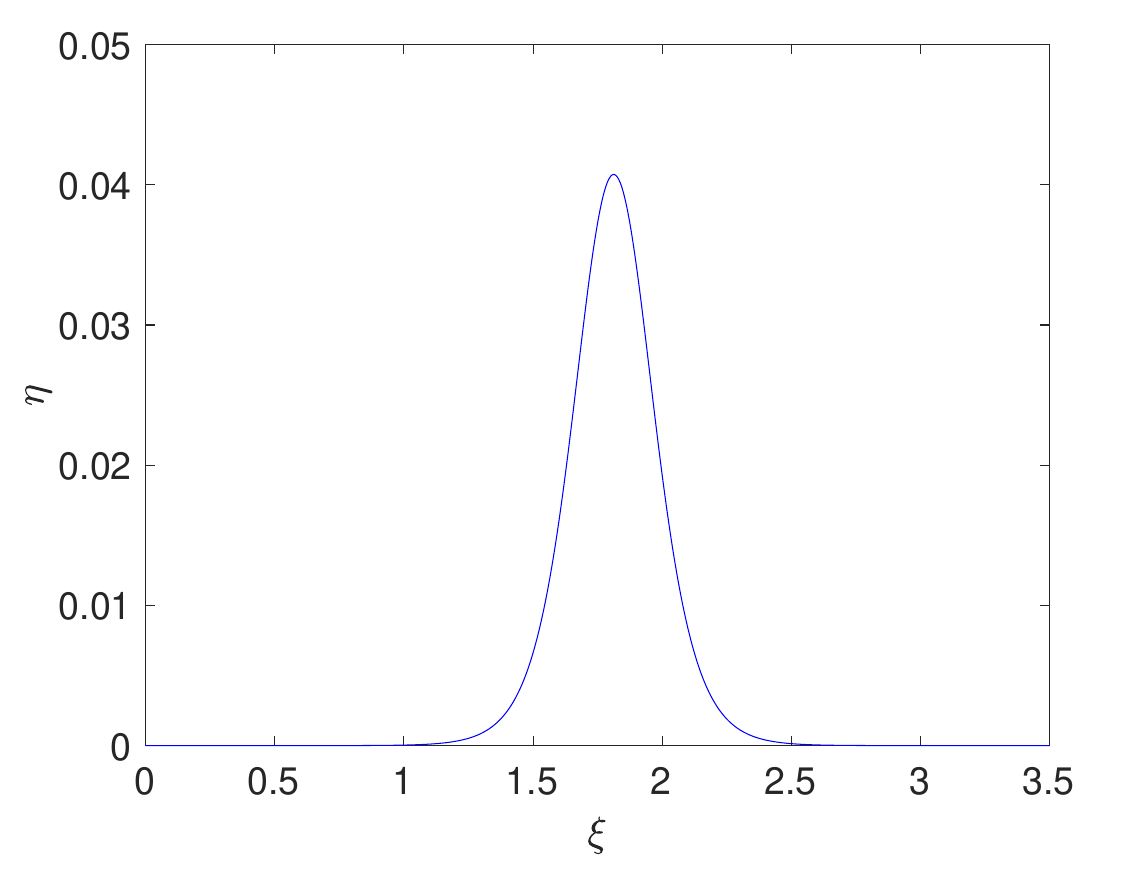}  
    \end{center}
    \caption{Typical potential for the ``motion" of the particle. The first panel represents a typical potential $U(\xi)$ of Eq. \eqref{fig:potential}, while the second panel is a closeup of the first one in the area near the origin. If the total energy is less than zero and the initial position of the particle is located between $0$ and $\eta_C$ (second panel), then the motion is periodic. This case is described by the red curve, which corresponds to a ``particle'' oscillating between points $A$ and $B$. $\eta_A$ and $\eta_B$ represent the minimum and the maximum of the periodic traveling wave, while the period is twice the ``time'' necessary to go from $A$ to $B$. If the level of the energy approaches zero from below, then the period becomes longer and longer, up to the limit situation corresponding to the dashed line. In such a case the solution describes a solitary wave traveling with speed $V$. The third panel  displays the trajectory in phase space. The red energy level corresponds to the periodic solution which oscillates between $A$ and $B$. The dashed line represents the separatrix, whose right lobe corresponds to a solitary wave, which is reported in the right panel. Finally, the last panel shows the shape of the solitary wave as a function of $\xi=x-Vt$.}\label{fig:potential}
\end{figure}
Integrating the equation of motion \eqref{eq:Newton} with a point in phase space which lies inside the right dashed lobe one obtains an orbit $(\eta(\xi),\eta'(\xi))$. The expression of $\eta(\xi)$ corresponds to a periodic traveling wave that moves with speed $V$.
The solitary wave is a particular orbit that passes through the separatrix. It can be viewed as 
%\lajos{maybe "as" is missing here?} \david{fixed}
the limiting case of a periodic orbit as the total energy approaches zero, 
and the corresponding period tends to infinity. The panel on the right depicts the solitary wave corresponding 
to the separatrix. 
The separatrix has been computed by a symplectic integrator based on a Gauss--Legendre collocation method for the solution of the initial value problem for a second order ordinary differential equation of the form \eqref{eq:Newton}, using the package {\tt gni\_irk2} \cite{hairer2003gnicodes}; see also 
\cite{Henrier_structure_preserving}.
As initial condition we take a point very close to the origin, with $\eta(0) = 10^{-9}$ and 
$\eta'(0) = \sqrt{\gamma_1/\tilde{\mu}}\,\eta(0)$, where $\tilde{\mu} = \mu(V/c)^2$.

\subsubsection{Traveling wave to $\order(\delta^5)$}
Now we consider the more challenging problem of computing a traveling wave for system 
\eqref{o4avg-xxt}.
Using the same {\em ansatz\/} adopted in the previous section, i.e., looking for a solution of the form 
$\eta = \eta(x-Vt)$ and $q = q(x-Vt)$, we obtain, as before, $q=V\eta$. Replacing this in the second equation, one obtains the following equation for $\eta$:
\[
    -\gamma_1\eta' + 2\gamma_2 \eta\eta' - 3\gamma_3 \eta^2\eta' + 4\gamma_4\eta^3\eta' 
    + 2\gamma_5\eta'\eta'' + 2\gamma_6\eta\eta''' + \hat{\mu}\eta'''-\hat{\nu}\eta^{(5)} = 0.
\]
Integrating this equation we obtain %\lajos{something is wrong below: "++"}
%\giovanni{Corrected, thanks. Indeed the next formula was wrong because of bad copy and paste. 
%I corrected it just now.}
\begin{equation}\label{eq:fourth}
    -\gamma_1\eta + \gamma_2 \eta^2 - \gamma_3 \eta^3 + \gamma_4\eta^4 
    + \gamma_5(\eta')^2 + \gamma_6(2\eta\eta'' - (\eta')^2) + \hat{\mu}\eta''-\hat{\nu}\eta^{(4)} = A,
\end{equation}
where $A$ is a constant. Since we look for solutions that are perturbation of a constant state, 
and denote $\eta$ the deviation from such a constant state, it follows that $A=0$.
The values of the additional coefficients are given by 
\[
    \gamma_4 = \frac{\delta^3}{4}
    \left(\frac{\alpha_4}{g}\, V^4 + \alpha_5 \, V^2 + \alpha_6 \, V^2 + g \alpha_7 \right),\quad
    \gamma_5 = \delta^3\left(\alpha_8 \, V^2 + \frac52\alpha_9 c^2\right), 
\]
\[
    \gamma_6 = \delta^3\left(\frac12\alpha_8 c^2+\alpha_9\,V^2\right),\quad
    \hat{\nu} = \delta^4V^2(\nu_1+\nu_2-\mu^2).
\]
In this case the usual analogy with classical mechanics is not possible. Furthermore, the system does not seem to possess further first integrals. It cannot be written as a Hamiltonian system of a particle in 
$\mathbb{R}^2$, since the ``forces'' are not gradient of a potential. 
We therefore resort to a completely different technique in order to find the solution of the ODE 
\eqref{eq:fourth} that corresponds to a traveling wave. 
We consider a domain which is sufficiently wide to contain the solitary wave, and such that the expected value of the traveling wave at the boundary of the domain is less than $10^{-7}$.
Then we discretize the domain, approximate the derivatives by finite difference and solve a boundary-value problem with homogeneous Neumann and Dirichlet conditions at both ends, looking for non-trivial solutions. 
Notice that $\eta \equiv 0 $ is a trivial solution of \eqref{eq:fourth}. However, since the equation is nonlinear, it may have more solutions. 
%Since we expect the traveling wave solution satisfying \eqref{eq:fourth} to be ``close'' to the 
%$\order(\delta^3)$ solitary wave, we use Newton's method to solve the discrete problem, using the 
Using Newton's method with the
$\order(\delta^3)$ solitary wave as initial guess, we obtain the solution shown by a dashed line in 
Fig.~\ref{fig:compare_soliton}. %\lajos{again, referencing problem, no such figure}
%\giovanni{Fixed, thank you!}

Here we also plot a solitary wave obtained from the finite volume simulation of the shallow water equations (the solitary wave is plotted for a fixed value $x$, over time).
Notice that the $\order(\delta^5)$ solitary wave is in much better agreement with the true solitary wave, compared to the 
$\order(\delta^3)$ one.
In computing both of the homogenized solitary waves, we have chosen the velocity $V$ in order to yield an amplitude equal to that of the true solitary wave. 
This required choosing slightly different velocities in the two cases. Specifically, we take 
$V_3 = c\times 1.023928$ for the $\order(\delta^3)$ solitary wave, and  $V_5 = c\times 1.02327$, so the solitary wave speed 
differs by less than 2.4\% from the unperturbed wave speed, and the relative difference in the imposed traveling wave speeds is about $0.064\%$.
%We remark that for a give setup, i.e. for a given function $H(y)$, the solitary wave are parametrized by their 
%velocity. For a given velocity, the maximum hight of the $\order(\delta^3)$ and the $\order(\delta^5)$ solitary wave are 
%slightly different. Here we choose two slighlty different values of the solitary wave velocity, so that their 
%maximum values coindice with the maximum of the emerged solitary wave. 
\begin{figure}
\centering
    \includegraphics[width=3in]{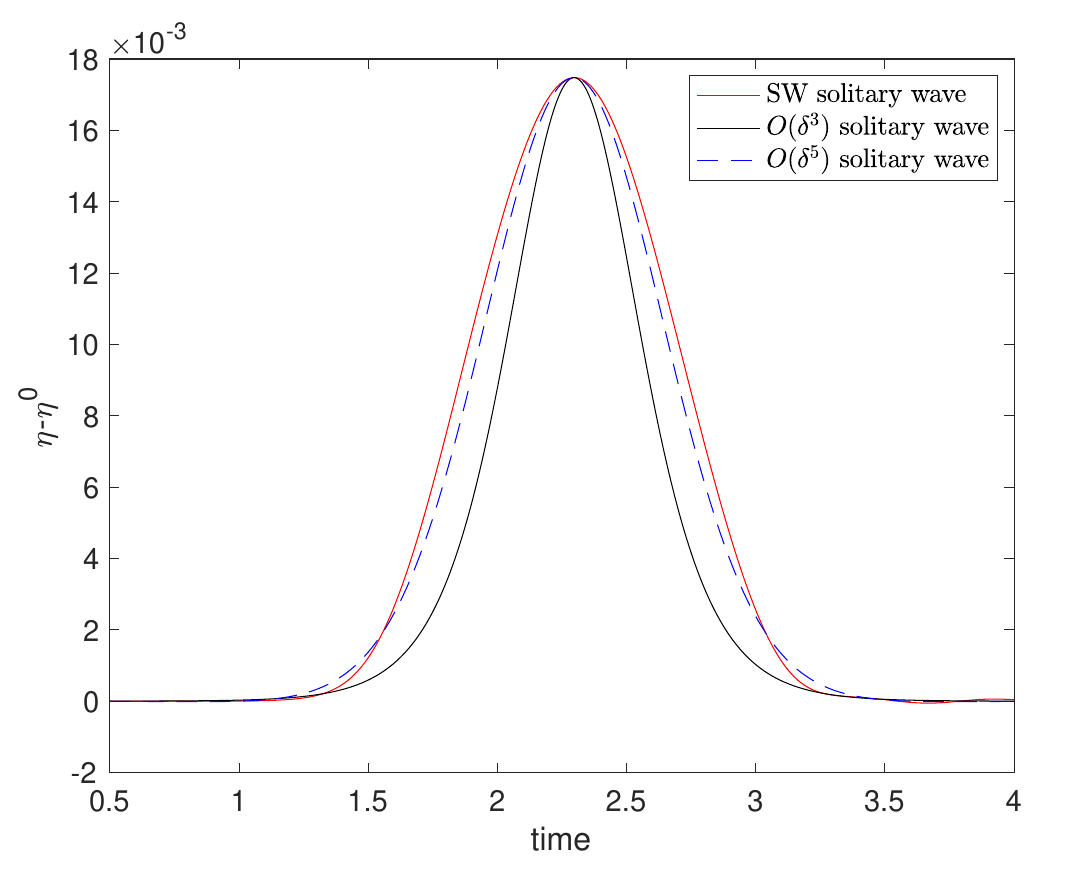}
    \caption{
 Comparison between the direct solution of the shallow water (SW) equations and the traveling wave solutions of the homogenized equations. 
    Solutions are plotted as a function of time: the red line represents the value of $\eta(\bar{x},t-\tau)$. The parameter $\tau$ is adjusted to align the computed solution with the traveling wave solutions. 
    The black line is the $\order(\delta^3)$-accurate solitary wave obtained by numerically integrating Eq.~\eqref{eq:Newton}
    along the separatrix, while the blue dashed line is accurate to $\order(\delta^5)$, obtained
    by solving Eq.~\eqref{eq:fourth}.}\label{fig:compare_soliton}
\end{figure}
    
\section{Comparison of direct and homogenized solutions \label{sec:numerical}}
In this section we explore the accuracy of the  homogenized approximation by comparing its numerical solutions to accurate solutions of the original system \eqref{eq:sw1}. 
We start by discussing the methods adopted for the numerical solution of both the original system 
\eqref{eq:sw1} and the homogenized 
system
\eqref{o4avg-xxt}.

\subsection{Numerical discretization of the homogenized equations}
We solve the homogenized equations with a Fourier
pseudo-spectral method.
We use the mixed-derivative form of the equations \eqref{o4avg-xxt},
due to its favorable dispersion relation and also its
convenience in terms of numerical discretization.
We can write this system as
\begin{align} \label{semi-discrete}
    \heta_t & = - \hq_x \\
    \hq_t & = (1-\delta^2 \mu \partial_x^2)^{-1} \FF(\heta,\hq),
\end{align}
where $\FF$ is a nonlinear operator that involves
derivatives with respect to $x$ only.  We discretize
$\FF$ in the standard pseudo-spectral way and then apply
the inverse elliptic operator $(1-\delta^2 \mu \partial_x^2)^{-1}$
in Fourier space, which does not require the solution of any
algebraic system.  The operator $\FF$ may appear to
be stiff since it contains high-order spatial derivatives.
However, these high-order terms have small coefficients
and are nonlinear, which renders them relatively small
for the weakly nonlinear regime in which we are interested.
We can therefore integrate the pseudo-spectral 
semi-discretization of \eqref{semi-discrete} efficiently with an 
explicit Runge--Kutta method.
In the examples that follow, we use the fifth-order method of Bogacki and Shampine \cite{bogacki1996}.

For the $\order(\delta^5)$ approximation, there are additional
linear terms, with 5th-order derivatives, in the momentum equation.  We convert these derivatives to be 4th-order in space and first order in time, and incorporate them
into the elliptic operator. This results in a system of the form
\begin{align} \label{semi-discrete5}
    \heta_t & = - \hq_x \\
    \hq_t & = (1-\delta^2 \mu \partial_x^2 + \delta^4 \mu_5 \partial_x^4)^{-1} \mathcal{F}_5(\heta,\hq).
\end{align}
The operator $F_5$ 
contains derivatives of up to fourth order,
but again its stiffness is alleviated by the fact that
these terms are highly nonlinear and have small coefficients.
Thus, explicit time integration is efficient in this case also.
For the spatial domain, we take $x\in[-L,L]$ where $L$ is chosen
large enough that the waves do not reach the boundaries before
the final time.

\subsection{Numerical methods for the variable-bathymetry shallow water system}
For the solution of the first-order variable-coefficient
hyperbolic shallow water system \eqref{eq:sw1} we use
the finite volume code Clawpack.
We make use of two different algorithms implemented in
Clawpack:
\begin{itemize}
    \item The \emph{classic Clawpack} algorithm, based on the Lax--Wendroff method with limiters \cite{LeVeque-FVMHP};
    \item The \emph{SharpClaw} algorithm, based on 5th-order WENO
    reconstruction in space and 4th-order Runge--Kutta integration in time \cite{KetParLev13}.
\end{itemize}
Accurate solution of this system requires a much finer spatial
grid, in order to resolve the bathymetric variation and its effects.
In order to save computational effort in the finite volume solutions, we take the spatial domain $[0,L]$
with a reflecting (solid wall) boundary condition
applied at $x=0$.

\subsection{Accuracy and computational cost}\label{sec:compcost}
The main purpose of the computations shown here is to provide a visual comparison of the homogenized model solution and the shallow water solution.  We therefore compute solutions that are sufficiently accurate so that further grid refinement would not produce any visible change in the plots.  In practice we have overdone this by computing solutions that change by less than $10^{-5}$ (absolute pointwise difference) when the grid is refined by a factor of two in both space and time.

With piecewise-constant bathymetry and a domain of length 800, reaching this level of error with a pseudospectral solver for the homogenized equations requires only 6 seconds of computation on an M1 Apple Macbook.  Using SharpClaw to solve the shallow water equations, reaching this level of error requires about 20 minutes of computation.
%, although this can be reduced to about 4 minutes by running in parallel.  
Thus the use of the homogenized model allows for a speedup of about 200x. Although computing the solution for a single scenario is quite feasible with either approach, the homogenized model is invaluable when exploring different parameter regimes to understand general solution behavior across a range of scenarios.

\subsection{Piecewise-constant bathymetry}
We consider first a slight variation on the example
presented in the introduction, with bathymetry
and initial data given by
\begin{subequations} \label{scenario_a}
\begin{align}
    b(x) & = \begin{cases} -1 & 0 \le x - \lfloor x \rfloor < 1/2  \\
    -0.3 & 1/2 \le x - \lfloor x \rfloor < 1, \end{cases} \\
    \eta(x,0) & = \frac{1}{40}e^{-x^2/9}, \\
    u(x,0) & = 0.
\end{align}
\end{subequations}

In Figure \ref{fig:scenario_a}, we compare the pseudospectral solution of the homogenized approximation, calculated up to terms of order three, four,
or five,
and the finite volume solution of the variable-bathymetry shallow water system \eqref{eq:sw1}.
The solution of the $\order(\delta^4)$ system is not shown because it is nearly indistinguishable from that of the $\order(\delta^3)$ system.

We see that the homogenized solution is a good approximation at early times
and becomes less accurate at later times, as expected.  The fourth-order terms provide only a small improvement.

\begin{figure}
    \includegraphics[width=\textwidth]{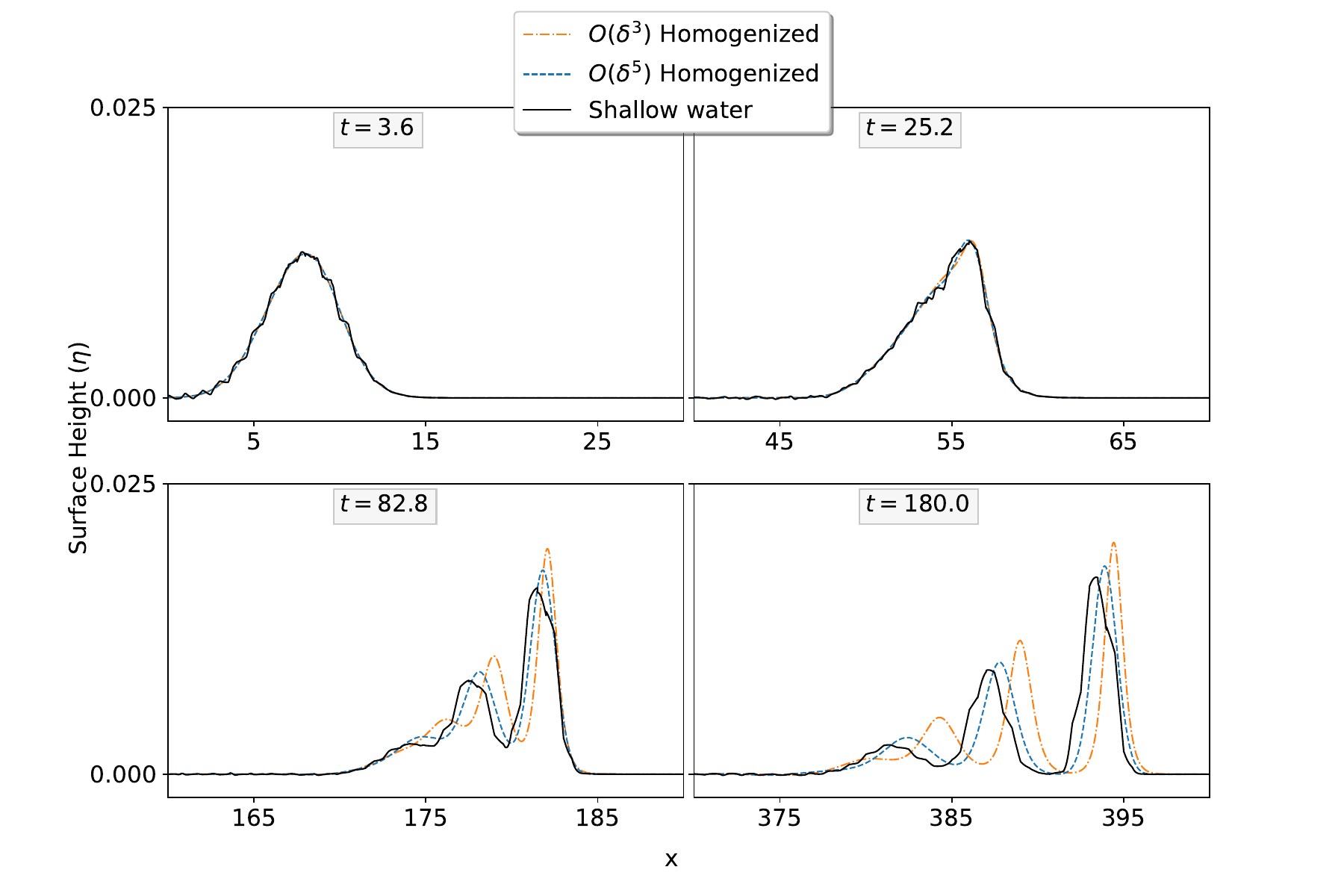}
    \caption{Comparison of homogenized and direct solutions, with
        bathymetry and initial data given by \eqref{scenario_a}.
        The surface elevation $\eta - \eta^0$ is shown, with the $x$-axis shifted to show the wave structure at each time.\label{fig:scenario_a}}
\end{figure}

Indeed, we have found that the linear dispersive terms appearing at odd
orders contribute much more significantly to the solution behavior, whereas
the nonlinear terms contribute significantly less.  Motivated by this,
we have additionally computed just the linear terms appearing in the
momentum equation at 7th order (as might be expected, there are no linear
dispersive 6th-order terms).  If these are included, the homogenized solution
becomes even closer to the shallow water solution.

It is possible to recover also the fast-scale variation in the solution
using the equations for $\eta^j(x,y,t)$ and $q^j(x,y,t)$.
The fast-scale components can be computed simply as a post-processing
step, since they are functions of $\heta$ and $\hq$ (and their derivatives)
and of $H(y)$.  In Figure \ref{fig:fast-scale} we show the solution
obtained using the $\order(\delta^5)$ averaged equations plus the linear 7th-order terms, and then adding
the fast-scale terms from \eqref{eta2} and \eqref{eta3}.  It can be seen
that these terms capture most, though not all, of the finer-scale
oscillations in the direct solution.
\begin{figure}
    \center
    \includegraphics[width=\textwidth]{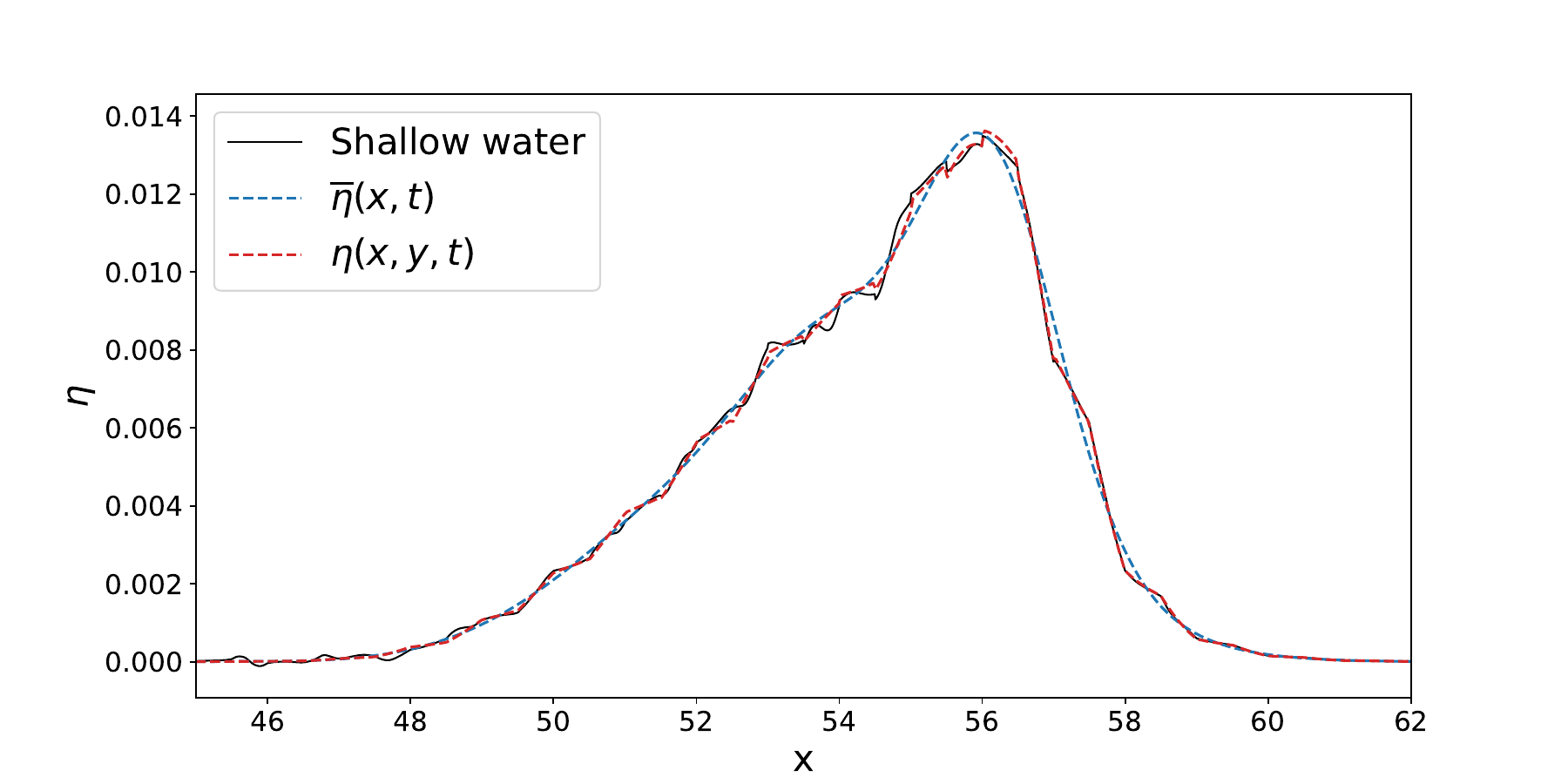}
    \caption{Comparison of homogenized and direct solutions at $t=25.2$, with
        bathymetry and initial data given by \eqref{scenario_a}.
        The surface elevation $\eta - \eta^0$ is shown.  Both the averaged and full solutions of the homogenized equations are plotted. \label{fig:fast-scale}}
\end{figure}

\subsection{Smooth bathymetry}
Next we take the same initial data as in \eqref{scenario_a}, but we use the bathymetry profile
\begin{align}\label{scenario_b}
    b(x) & = -\frac{3}{5} + \frac{2}{5} \sin (2 \pi x).
\end{align}
Numerical solutions are shown in Figure \ref{fig:scenario_b}.  We see that in this case the direct finite volume solution includes some high-wavenumber oscillations; convergence tests with different numerical methods indicate that these oscillations are part of the exact solution.  
%We have additionally plotted a local moving average of the FV solution, where the average is taken over an interval of size $\delta=1$.
The qualitative behavior is similar to that of the piecewise-constant case above: the accuracy of the homogenized approximation gradually 
diminishes over time, and the 5th-order approximation is more accurate than what is obtained with only the terms of up to 3rd or 4th order.

In these examples we have deliberately chosen
bathymetry and initial data that lead to solitary wave formation.
For much larger initial data, or bathymetry with smaller variation
relative to the average fluid depth, wave breaking occurs and the homogenized
model fails to fully capture the solution dynamics.
We do not investigate this further here but refer the reader to \cite{2012_ketchesonleveque_periodic,2019_periodic} for related work on this topic.

\begin{figure}
    \includegraphics[width=\textwidth]{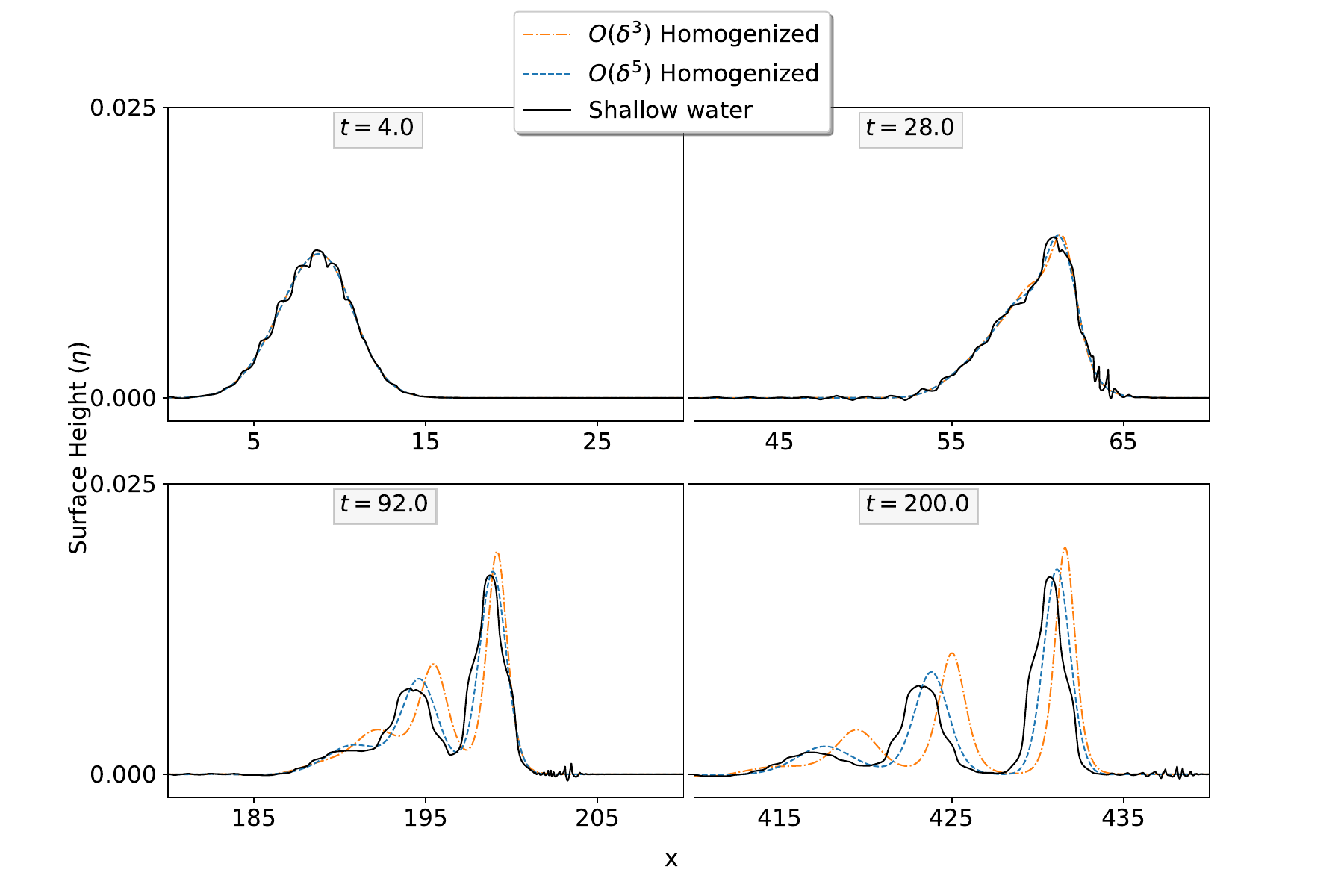}
    \caption{Comparison of homogenized and direct solutions, with smooth
        bathymetry \eqref{scenario_b}.
        The surface elevation $\eta = h+b$ is shown.\label{fig:scenario_b}}
\end{figure}

Finally, we briefly examine the error in the homogenized solution surface height as a function of time.  
In order to compare the direct solution with the
homogenized solution, we  compute the sliding average of the solution on a space window of width $\delta$, defined as
\[
    \bar{\eta}(x,t) = \frac{1}{\delta}\int_{-\delta/2}^{\delta/2}\eta(x+\xi,t)\,d\xi.
\]
We denote by $\eta_{\rm FV}$ the approximation of $\bar{\eta}(x,t)$ computed by the finite volume method.
In Figure \ref{fig:w_vs_time}, we plot a Wasserstein-1 distance between the homogenized (PS) solution and the sliding average of the direct finite volume (FV) solution:
\begin{align}
    W_1(\eta_\textup{PS}, 
    \eta_\textup{FV}) = \frac{\Delta x}{\sum_j
    (\eta_\textup{FV})_j}\sum_j |
    (H_\textup{PS})_j -
    (H_\textup{FV})_j|,
    \label{eq:W1}
\end{align}
where we denoted by $H_i$ the discrete primitive of $\eta$, i.e.\
$H_i = \Delta x \sum_{j\leq i} \eta_j $, and $\Delta x$ is the grid spacing used for the pseudospectral solver. 
 We used $\Delta x = 1/8$, which is much larger than the the grid spacing $\Delta x_\textup{FV}$ employed by the FV scheme (we used $\Delta x_\textup{FV} = 1/640$) but still sufficiently small to obtain a very accurate solution, thanks to the spectral accuracy of the method.
For a justification of  formula
\eqref{eq:W1}, see Appendix \ref{sec:Wasserstein}.

The choice of the Wasserstein-1 distance appears quite natural to measure the discrepancy between the two profiles, since it provides a measure of the effective displacement of the two profiles.
%(see Appendix \ref{sec:Wasserstein}). 
We see that for long times the difference grows roughly linearly, with a rate that becomes smaller if higher-order terms are included.  Here we have also included the pseudospectral solution of the homogenized equations including all terms up to order $\delta^5$ and just the linear terms of order $\delta^7$, as discussed above.
At the final time $t=200$, even for the least accurate
approximation we have a phase error of about one meter after the waves have propagated a distance of more than 400 meters.

%It must be noted that, although we have used fine grids so that numerical truncation errors are much smaller than the discrepancy being measured, still we are comparing point-wise solution values from the pseudospectral solver with cell-averaged solutions from the FV solver, so there is an inherent error of order $(\Delta x)^2$ which seems to be dominant at very early times.  A more detailed study of the behavior of the error with respect to $t$ and $\delta$ is an interesting area for future work.}

The discrepancy between the curves at very early times seems to be due to the error in the approximation of the integral in the computation of primitive function for the pseudospectral solution. We used the midpoint rule, which introduces an error of $O(\Delta x^2)$.

\begin{figure}
    \center
   \includegraphics[width=0.8\textwidth]{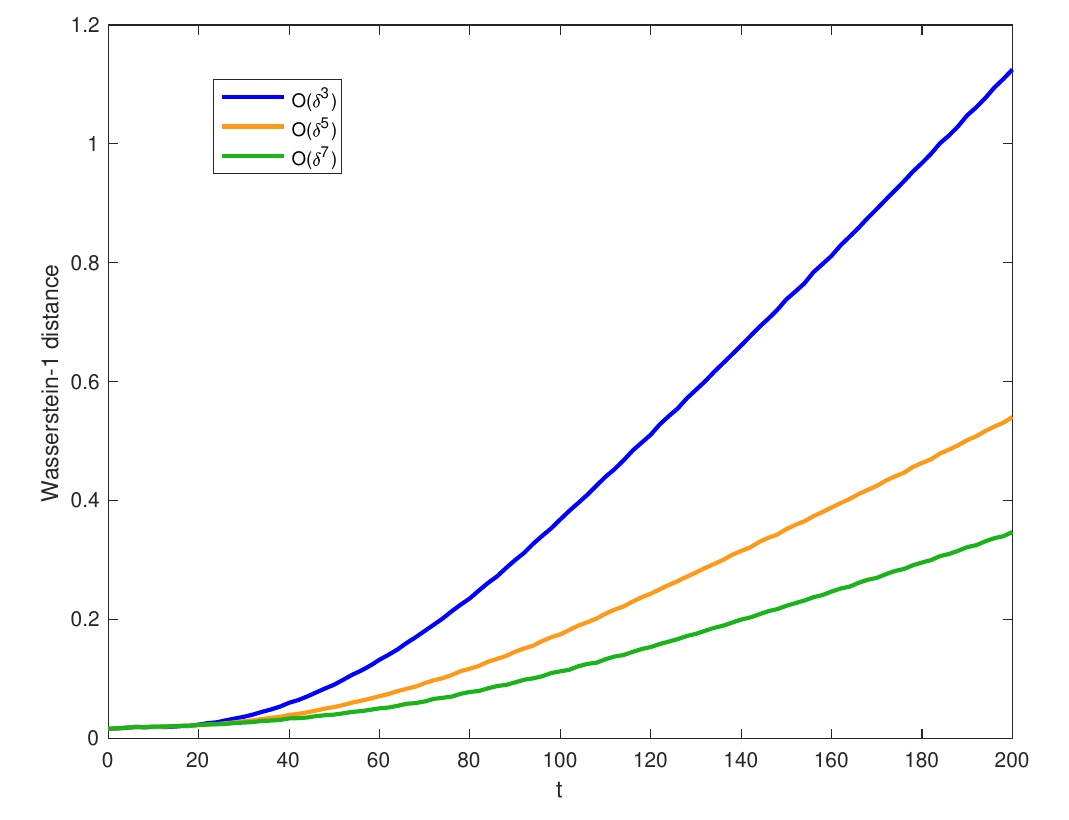}
    \caption{Wasserstein distance between the sliding average of the numerical solution of the original shallow water system obtained by the finite volume method, and the solutions of the  homogenized at various orders, obtained by the pseudospectral method. \label{fig:w_vs_time}}
\end{figure}

\section{Conclusions and future work}

In this work we have shown that water waves over periodic bathymetry can be accurately
described by an effective system of constant-coefficient equations derived via multiple-scale
perturbation theory.  The resulting equations are dispersive and possess periodic and solitary 
traveling-wave solutions.  This is in contrast to typical solutions of the shallow water
equations, which generally exhibit wave breaking.  Nevertheless, numerical solutions of
the shallow water equations in the presence of periodic bathymetry  
are in close agreement with solutions of the effective medium equations.
Numerical solution of the effective equations can be drastically more efficient; using
state-of-the-art discretizations for both the original shallow water equations and the
effective homogenized equations, we have seen a speedup of more than two orders of magnitude
by using the latter.

Although we have focused on scenarios that give rise to solitary waves, it should be emphasized
that a number of other solution behaviors exist.  In addition to that already-mentioned periodic
traveling waves, for large initial data and/or small bathymetry variation, shock formation is observed; 
initial data consisting of a negative perturbation to a flat surface yield still other kinds
of behavior that have yet to be explored.  Even the solitary and periodic 
wave solutions are not true traveling waves as they are modulated by the periodic bathymetry; in this
sense they resemble breathers.  Further investigation of the structure of all of these types
of solutions is an area for future work.

This study is similar in many ways to that of LeVeque \& Yong \cite{leveque2003}, which was
focused on the $p$-system.  Indeed, the qualitative solution behaviors and the structure of 
the equations derived have much in common, even though periodicity comes into the equations
in somewhat different ways.  The Wolfram \textit{Mathematica} code developed in the present work can be adapted
in a very straightforward way to reproduce the results of \cite{leveque2003}, and such a reproduction
is also included with the code written for this paper.  We hope that this may facilitate
similar analysis of other systems.

Finally, this work could be extended in important mathematical and physical directions, for instance
by quantifying the space and time scales of validity of the various approximations, or
starting from a more accurate water wave model that already includes dispersion.

\appendix
\section{Appendix}\label{sec:appendix}

Here, we recall the averaging functionals and operators  introduced in \cite{yong2002}, and prove some of their properties that we have used in our work.

In what follows, let $f,g:[0,1]\to\mathbb{R}$ and $\varphi,\psi:\mathbb{R}\to\mathbb{R}$
  denote some sufficiently smooth functions---continuous or continuously differentiable, guaranteeing the existence of the definite and indefinite integrals appearing below. We always assume that the compositions  
$\varphi\circ f$ and $\psi\circ f$ are defined on the whole interval $[0,1]$.

\subsection{The functional $\mean{\ \cdot\ }$ }\label{sec:averageop}

The integral average of $f$ over the interval $[0,1]$, denoted by $\mean{f}\in\mathbb{R}$, is defined as
$$
    \mean{f} := \int_0^1 f.
$$
%(In \cite{yong2002}, they use the notation $\langle f  \rangle$ instead.) 
For functions satisfying $f(0)=f(1)$, we have
\begin{equation}\label{FTC}
   \mean{(\varphi\circ f)\cdot f'}=0,
\end{equation}
since $\int_0^1 ((\varphi\circ f)\cdot f')=\Phi(f(1))-\Phi(f(0))$, with $\Phi$ denoting an antiderivative of $\varphi$.

\subsection{The operators $\{\,\cdot\, \}$ and  $\fluctint{\,\cdot\,}$ }\label{sec:fluctintop}

The fluctuating part of the function $f$, denoted by $\{ f \}:[0,1]\to\mathbb{R}$, is defined as 
$$\{f\}:=f-\mean{f}.$$
Clearly, we have $\mean{\{f\}}=0$.

In \cite{yong2002}, the integral of the fluctuating part, denoted by 
$\llbracket f \rrbracket:[0,1]\to\mathbb{R}$, is defined for any $y\in [0,1]$ as
$$
   \llbracket f \rrbracket(y):=\int_s^y \{f\}, \ \ \text{where $s$ is chosen so that } \mean{\fluctint{f}} = 0.
$$
(Note the typo in \cite[Appendix A]{yong2002}: in their corresponding sentence ``where $s$ is chosen such that\ldots'' the symbol $\{\}$ should be replaced by $\llbracket\ \rrbracket$.)  In the above---somewhat implicit---definition of $\fluctint{f}$, one can think of $s$ as choosing  the constant of integration suitably:  $\llbracket f \rrbracket$ is the antiderivative of $\{f\}$ with zero mean. We can easily check that this operator can be rewritten more explicitly as our formula \eqref{[[]]expldef} in Section \ref{sec:averaging}.

\begin{prop}
We also have the representations
\begin{equation}\label{[[]]doubleint} \fluctint{f} (y)= \int_0^y f(\xi)d\xi  +\left(\frac{1}{2}-y\right) \int_0^1f(\xi)d\xi-\int_0^1 \int_0^\tau f(\xi)d\xi d\tau
\end{equation}
and
\begin{equation}\label{[[]]singleint} \fluctint{f} (y)= \int_0^y f(\xi)d\xi  - \int_0^1 \left(\frac{1}{2} +y - \xi\right)f(\xi)d\xi.
\end{equation}
\end{prop}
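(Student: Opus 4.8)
The plan is to derive both representations directly from the explicit formula \eqref{[[]]expldef}, treating \eqref{[[]]doubleint} as an intermediate step on the way to \eqref{[[]]singleint}; no periodicity of $f$ is needed, only the smoothness assumed at the start of the Appendix. First I would substitute $\braced{f}(\xi) = f(\xi) - \mean{f}$ into the right-hand side of \eqref{[[]]expldef} and use linearity of the integral. The leading single integral becomes $\int_0^y \braced{f}(\xi)\,d\xi = \int_0^y f(\xi)\,d\xi - y\mean{f}$, while the subtracted iterated integral becomes $\int_0^1\int_0^\tau \braced{f}(\xi)\,d\xi\,d\tau = \int_0^1\int_0^\tau f(\xi)\,d\xi\,d\tau - \tfrac12\mean{f}$, where the term $\tfrac12\mean{f}$ arises from $\mean{f}\int_0^1 \tau\,d\tau = \tfrac12\mean{f}$. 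Collecting the three contributions and writing $\mean{f} = \int_0^1 f(\xi)\,d\xi$ produces exactly \eqref{[[]]doubleint}. This stage requires nothing beyond linearity and the elementary value $\int_0^1\tau\,d\tau = \tfrac12$.

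To pass from \eqref{[[]]doubleint} to \eqref{[[]]singleint}, the key step is to collapse the iterated integral to a single one by Fubini's theorem. On the triangular region $\{(\xi,\tau) : 0 \le \xi \le \tau \le 1\}$ I would switch the order of integration, giving $\int_0^1\int_0^\tau f(\xi)\,d\xi\,d\tau = \int_0^1 \left(\int_\xi^1 d\tau\right) f(\xi)\,d\xi = \int_0^1 (1-\xi)f(\xi)\,d\xi$. Substituting this into \eqref{[[]]doubleint} leaves three terms, two of which are integrals over $[0,1]$. Combining those two under a single integral sign, the integrand simplifies to $\left(\tfrac12 - y\right) - (1-\xi) = -\left(\tfrac12 + y - \xi\right)$, which is precisely the kernel appearing in \eqref{[[]]singleint}, completing the derivation.

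I expect this to be an essentially routine computation, the only genuinely non-mechanical step being the Fubini interchange: care is required to describe the triangular domain correctly so that one obtains the weight $1-\xi$ rather than $\xi$. Everything else is linearity of the integral together with the zero-mean normalization $\mean{\fluctint{f}} = 0$ already encoded in the definition of $\fluctint{\cdot}$. The continuity hypotheses stated at the beginning of the Appendix are more than sufficient both to guarantee existence of all the iterated integrals and to justify the interchange of the order of integration.
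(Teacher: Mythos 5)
Your proof is correct and takes essentially the same approach as the paper: \eqref{[[]]doubleint} is obtained by substituting $\braced{f}=f-\mean{f}$ into \eqref{[[]]expldef} and using $\int_0^1\tau\,d\tau=\tfrac12$, and \eqref{[[]]singleint} then follows from \eqref{[[]]doubleint} by reducing the iterated integral to $\int_0^1(1-\xi)f(\xi)\,d\xi$. The only (immaterial) difference is in that last step: you interchange the order of integration over the triangle $0\le\xi\le\tau\le1$ via Fubini's theorem, while the paper integrates by parts with $F(\tau)=\int_0^\tau f$ --- two standard ways of establishing the same identity, equally justified under the stated continuity hypotheses.
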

\begin{proof}
By using \eqref{[[]]expldef}, we get
\[\fluctint{f} (y)= \int_0^y (f(\xi)-\mean{f})d\xi  -\int_0^1 \int_0^\tau (f(\xi)-\mean{f})d\xi d\tau=\]
\[\int_0^y f(\xi)d\xi -\mean{f} y -\int_0^1 \int_0^\tau f(\xi)d\xi d\tau
+\int_0^1  \mean{f}\tau d\tau,
\]
which is \eqref{[[]]doubleint}.
To prove \eqref{[[]]singleint}, let $F:[0,1]\to\mathbb{R}$ denote the antiderivative $F(\tau):=\int_0^\tau f$. Integration by parts then yields
\[\int F(\tau)d\tau=\int 1\cdot F(\tau)d\tau=\tau F(\tau)-\int\tau f(\tau)d\tau,\]
hence
\[\int_0^1 \int_0^\tau f(\xi)d\xi d\tau=\int_0^1 F(\tau)d\tau=1\cdot F(1)-0\cdot F(0)-\int_0^1 \xi f(\xi)d\xi.\]
Plugging this into \eqref{[[]]doubleint}, we obtain \eqref{[[]]singleint}.
\end{proof}
Next, we formulate some results implying that  certain integral averages involving $\fluctint{f}$ vanish (cf.~\eqref{FTC}). One such statement is found in \cite{yong2002}, where it is proved that 
\begin{align} \label{fbf0}
    \mean{\,f\fluctint{f}\,}=0.
\end{align}
From this comes the identity 
\begin{equation}\label{integrationbypartsfor[[]]}
    \mean{\,f\fluctint{g}\,}=-\mean{\, \fluctint{f}g\, },
\end{equation}
which is, essentially, a form of integration by parts.

\begin{prop}
Assume that $f(0)=f(1)$. Then for any 
continuous functions $\varphi$ and $\psi$  we have
\begin{equation}\label{phipsiformula}
    \mean{\, (\varphi\circ f)\cdot f'\cdot \  \fluctint{\,(\psi\circ f)\cdot f'\,}\,}=0.
\end{equation}
\end{prop}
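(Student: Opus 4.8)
The plan is to reduce the statement to two applications of the fundamental-type identity \eqref{FTC}. I would write $g:=(\psi\circ f)\cdot f'$ and $h:=(\varphi\circ f)\cdot f'$, so that the quantity to be shown to vanish is $\mean{\,h\cdot\fluctint{g}\,}$. The first thing I would record is that, because $f(0)=f(1)$, property \eqref{FTC} already gives $\mean{g}=\mean{h}=0$; hence $\braced{g}=g$ and the fluctuating-part structure simplifies from the outset.

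The crucial step is to simplify $\fluctint{g}$ itself. Let $\Psi$ be an antiderivative of $\psi$; then $g=(\psi\circ f)\cdot f'=(\Psi\circ f)'$, so an antiderivative of $\braced{g}=g$ is simply $\Psi\circ f$ (up to an additive constant, which is irrelevant). Since $\fluctint{g}$ is by definition the mean-zero antiderivative of $\braced{g}$, I would conclude
\[
    \fluctint{g}=\braced{\Psi\circ f}=\Psi\circ f-\mean{\Psi\circ f}.
\]
This is the key observation: the a priori cumbersome double-integral operator collapses to a single composition precisely because its argument is both mean-zero and an exact derivative along $f$.

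With this in hand the computation finishes quickly. Substituting and expanding,
\[
    \mean{\,h\cdot\fluctint{g}\,}=\mean{\,(\varphi\circ f)\cdot f'\cdot(\Psi\circ f)\,}-\mean{\Psi\circ f}\cdot\mean{\,(\varphi\circ f)\cdot f'\,}.
\]
The second term vanishes because $\mean{h}=0$. For the first term I would use the pointwise identity $(\varphi\circ f)\cdot(\Psi\circ f)=(\varphi\Psi)\circ f$, so that the integrand equals $\big((\varphi\Psi)\circ f\big)\cdot f'$; since $\varphi\Psi$ is continuous and $f(0)=f(1)$, a second application of \eqref{FTC} (with $\varphi$ there replaced by the product function $\varphi\Psi$) shows this term is zero as well.

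The argument involves no serious obstacle; the only thing that requires care is the simplification of $\fluctint{g}$, i.e.\ recognizing that the mean-zero-antiderivative operator acts trivially on an argument that is itself mean-zero and an exact derivative. A minor point worth checking is regularity: since $\psi$ is merely continuous, $\Psi\in C^1$, so the product $\varphi\Psi$ is continuous and the compositions and integrations used above are all justified.
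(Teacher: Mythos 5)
Your proposal is correct and follows essentially the same route as the paper: both hinge on the observation that, because $f(0)=f(1)$, the operator $\fluctint{\cdot}$ applied to $(\psi\circ f)\cdot f'$ collapses to $\Psi\circ f$ minus a constant, after which \eqref{FTC} finishes the argument. The only cosmetic difference is that you split off the constant and invoke \eqref{FTC} twice (once via $\mean{h}=0$, once for $(\varphi\Psi)\circ f\cdot f'$), whereas the paper absorbs the constant into the continuous function $\varphi\cdot(\Psi-\mathrm{const})$ and applies \eqref{FTC} once.
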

\begin{proof}
Let $\Psi$ denote an antiderivative of $\psi$. 
Then $\Psi\circ f$ is an antiderivative of $(\psi\circ f)\cdot f'$, so \eqref{[[]]doubleint} yields that
\[
\fluctint{\,(\psi\circ f)\cdot f'\,}(y)=\Psi(f(y))-\Psi(f(0))+\left(\frac{1}{2}-y\right)(\Psi(f(1))-\Psi(f(0)))-\text{const}_1=
\]
\[
\Psi(f(y))-\text{const}_2,
\]
where we took into account the fact that $f(0)=f(1)$, and 
the double integral in \eqref{[[]]doubleint} and $\Psi(f(0))$ have been merged into the constants. 
Therefore 
\[
\mean{\,(\varphi\circ f)\cdot f'\cdot \  \fluctint{\,(\psi\circ f)\cdot f'\,}\,}=\mean{\, (\varphi\circ f)\cdot \big(\Psi\circ f-\text{const}_2\big)\cdot f'\,},
\]
so \eqref{FTC} becomes applicable, proving \eqref{phipsiformula}.
\end{proof}
\begin{cor} Assume that $f>0$ and $f(0)=f(1)$. Then for any $\alpha, \beta\in\mathbb{R}$ we have
\[
%\begin{equation}
    \mean{\,f^\alpha f'\cdot \  \fluctint{\,f^\beta f'\,}\,}=0.
%\end{equation}
\]
\end{cor}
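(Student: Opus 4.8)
The plan is to recognize that this corollary is a direct specialization of the preceding Proposition, namely formula \eqref{phipsiformula}, obtained by choosing the power functions $\varphi(t) := t^\alpha$ and $\psi(t) := t^\beta$. With these choices the compositions become $\varphi\circ f = f^\alpha$ and $\psi\circ f = f^\beta$, so that $(\varphi\circ f)\cdot f' = f^\alpha f'$ and $(\psi\circ f)\cdot f' = f^\beta f'$, which reproduces exactly the integrand $\mean{\,f^\alpha f' \cdot \fluctint{\,f^\beta f'\,}\,}$ appearing in the statement.

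First I would check that the hypotheses of the Proposition are met. The assumption $f(0)=f(1)$ is carried over verbatim, so it remains only to verify that $\varphi$ and $\psi$ are continuous along $f$. This is precisely where the positivity assumption $f>0$ enters: since $f$ is continuous on the compact interval $[0,1]$, it attains a positive minimum $m>0$ and a maximum $M$, so its range lies in $[m,M]\subset(0,\infty)$. On such an interval bounded away from zero, both power maps $t\mapsto t^\alpha$ and $t\mapsto t^\beta$ are continuous (indeed smooth) for \emph{every} real $\alpha,\beta$, even when the exponents are negative or non-integer. Hence $\varphi\circ f$ and $\psi\circ f$ are defined and continuous on all of $[0,1]$, meeting the standing assumptions of the Appendix and the continuity requirement of the Proposition.

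With the hypotheses verified, I would simply invoke \eqref{phipsiformula} to conclude that the average vanishes. I do not expect any genuine obstacle: the only point deserving care is the observation that $f>0$ is exactly what guarantees the power functions are well-defined and continuous along $f$, which explains why this extra hypothesis (absent from the Proposition) is imposed in the corollary. The conclusion then follows at once.
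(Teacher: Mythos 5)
Your proposal is correct and is exactly the argument the paper intends: the corollary is the specialization of Proposition \eqref{phipsiformula} to $\varphi(t)=t^\alpha$, $\psi(t)=t^\beta$, with the hypothesis $f>0$ (together with continuity of $f$ on the compact interval $[0,1]$) ensuring these power functions are continuous on the range of $f$ for arbitrary real exponents. Your explicit verification of that continuity point is a nice touch, but the route is the same.
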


Another generalization of \eqref{fbf0} is as follows.  Let $\fluctint{\cdot}_j$ denote the
operator $\fluctint{\cdot}$ applied  $j\in\mathbb{N}^+$ times; e.g.,~$\fluctint{f}_3=\fluctint{\fluctint{\fluctint{f}}}$.
\begin{prop}
    If $j=2k+1$, $k=0,1,\ldots$ (i.e., if $j$ is a positive odd number), then
    $$\avg{f\fluctint{f}_j} = 0.$$
\end{prop}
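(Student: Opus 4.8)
The plan is to deduce this from the integration-by-parts identity \eqref{integrationbypartsfor[[]]} together with the base case \eqref{fbf0}, by a short telescoping argument that exploits an antisymmetry hidden in the operator $\fluctint{\cdot}$. Write $g_m := \fluctint{f}_m$ for $m\geq 0$, so that $g_0=f$ and $g_{m+1}=\fluctint{g_m}$. First I would record the elementary \emph{shift relation}
$$
\mean{g_a\,g_b} = -\mean{g_{a+1}\,g_{b-1}}, \qquad b\geq 1,
$$
which follows immediately from \eqref{integrationbypartsfor[[]]}: substituting $g_a$ and $g_{b-1}$ for $f$ and $g$ there, and using $\fluctint{g_{b-1}}=g_b$ and $\fluctint{g_a}=g_{a+1}$, gives exactly $\mean{g_a\,\fluctint{g_{b-1}}}=-\mean{\fluctint{g_a}\,g_{b-1}}$. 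This step needs nothing beyond the smoothness already assumed on $f$, since each $g_m$ is then well defined and \eqref{integrationbypartsfor[[]]} was itself derived from \eqref{fbf0} with no extra hypotheses.

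Next I would iterate the shift relation to move all applications of $\fluctint{\cdot}$ onto a single factor. Applying it $j$ times, each step increasing the first index by one and decreasing the second by one, is legitimate as long as the second index remains at least $1$; starting from $(0,j)$ the steps use second index $j,j-1,\ldots,1$, all admissible, and one arrives at
$$
\mean{f\,\fluctint{f}_j} = \mean{g_0\,g_j} = (-1)^{j}\,\mean{g_j\,g_0}.
$$
Since $\mean{\cdot}$ is just integration over $[0,1]$, the product inside is commutative, so $\mean{g_j\,g_0}=\mean{g_0\,g_j}=\mean{f\,\fluctint{f}_j}$. Substituting back yields $\mean{f\,\fluctint{f}_j}=(-1)^{j}\,\mean{f\,\fluctint{f}_j}$.

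Finally, because $j=2k+1$ is odd we have $(-1)^{j}=-1$, so the last identity reads $\mean{f\,\fluctint{f}_j}=-\mean{f\,\fluctint{f}_j}$, forcing $\mean{f\,\fluctint{f}_j}=0$; for $j=1$ this recovers \eqref{fbf0}. I do not anticipate a genuine obstacle: the only point requiring care is the bookkeeping in the iteration, namely checking that the second index stays $\geq 1$ at every application of the shift relation so that each use of \eqref{integrationbypartsfor[[]]} is licensed, and confirming that the accumulated sign is precisely $(-1)^{j}$. The essential mechanism is that \eqref{integrationbypartsfor[[]]} makes $\fluctint{\cdot}$ behave like a skew-adjoint operator with respect to $\mean{\cdot}$, and an odd number of transfers produces the sign reversal that kills the average.
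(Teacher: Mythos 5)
Your proof is correct and is essentially the paper's own argument: both iterate the integration-by-parts identity \eqref{integrationbypartsfor[[]]} to shift copies of $\fluctint{\cdot}$ from one factor to the other, then use commutativity of the product together with the odd total number of sign flips to conclude that $\avg{f\fluctint{f}_j}$ equals its own negative and hence vanishes. The only (cosmetic) difference is where the iteration stops: the paper halts at the middle pair, writing $\avg{f\fluctint{f}_j}=(-1)^k\avg{\fluctint{f}_k\fluctint{f}_{k+1}}=(-1)^{k+1}\avg{\fluctint{f}_{k+1}\fluctint{f}_k}$, whereas you carry it through all $j$ steps to reach $(-1)^j\avg{\fluctint{f}_j\,f}$.
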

\begin{proof}
    By repeatedly applying \eqref{integrationbypartsfor[[]]}, we have
    $$\avg{f\fluctint{f}_j} = (-1)^k\avg{\fluctint{f}_{k}\fluctint{f}_{k+1}} = (-1)^{k+1}\avg{\fluctint{f}_{k+1}\fluctint{f}_{k}}.$$
    But the last equality implies that all these quantities vanish.
\end{proof}

Expressions such as $\mean{f^2\fluctint{f}}$ 
do not vanish in general, as shown, for example,
by the function
$$f(y):=\sin (2 \pi  y)+\cos (2 \pi  y)+\cos (4 \pi  y).$$
However, under a certain symmetry assumption, $\mean{f^2\fluctint{f}}=0$. This is the content of Proposition \ref{symmprop} below. 

We say that a function $g:[0,1]\to\mathbb{R}$ is \textit{even with respect to the midpoint} if 
\begin{equation}\label{def:evenwrtmidpoint}
    \forall y\in[0,1]: \quad g(y)=g(1-y),
\end{equation}
and \textit{odd with respect to the midpoint} if 
\begin{equation}
\label{def:oddwrtmidpoint}
\forall y\in[0,1]: \quad g(y)=-g(1-y).
\end{equation}
\begin{lem}\label{oddlemma}
    Suppose that $f$ is even with respect to the midpoint. Then $\fluctint{f}$ is odd with respect to the midpoint.
\end{lem}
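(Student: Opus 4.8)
The plan is to reduce everything to the explicit representation \eqref{[[]]doubleint} and to exploit the fact, already recorded in Section \ref{sec:averaging}, that $\mean{\fluctint{f}}=0$ for every $f$. First I would note that the midpoint symmetry passes to the plain antiderivative: the substitution $\xi\mapsto 1-\xi$ together with the hypothesis $f(\xi)=f(1-\xi)$ gives $\int_{1-y}^1 f=\int_0^y f$, and therefore $\int_0^{1-y} f=\mean{f}-\int_0^y f$, where I use that $\int_0^1 f=\mean{f}$.

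Next I would evaluate \eqref{[[]]doubleint} at $y$ and at $1-y$ and add the two expressions. The term carrying the factor $\frac12-y$ is antisymmetric about the midpoint, since $(\frac12-y)+(\frac12-(1-y))=0$, so those contributions of $(\frac12-y)\int_0^1 f$ cancel; the constant double-integral term simply contributes twice. Applying the identity from the previous step, the two single integrals collapse: $\int_0^y f+\int_0^{1-y} f=\mean{f}$. Hence $\fluctint{f}(y)+\fluctint{f}(1-y)$ equals a constant $D$ independent of $y$.

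Finally, to pin down $D=0$, I would integrate the relation $\fluctint{f}(y)+\fluctint{f}(1-y)=D$ over $y\in[0,1]$. Both terms on the left integrate to $\mean{\fluctint{f}}$, the second after the substitution $y\mapsto 1-y$, and this quantity vanishes by the property $\mean{\fluctint{f}}=0$. Thus $D=0$, giving $\fluctint{f}(1-y)=-\fluctint{f}(y)$, which is precisely \eqref{def:oddwrtmidpoint}.

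The computation is entirely routine, so I do not anticipate a genuine obstacle; the only points requiring a little care are the bookkeeping of which terms in \eqref{[[]]doubleint} are symmetric and which are antisymmetric about $y=\frac12$, and the recognition that the leftover constant $D$ is most cleanly eliminated by averaging over one period rather than by evaluating the double integral explicitly. I would regard this last observation as the mild \emph{trick} that keeps the argument short; an equally valid alternative is to work directly with the zero-mean antiderivative $G(y):=\int_0^y\{f\}$ and show $G(1-y)=-G(y)$ using $\int_0^1\{f\}=0$, after which $\mean{G}=0$ follows automatically and $\fluctint{f}=G$.
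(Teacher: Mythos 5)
Your proof is correct, and it closes the argument in a genuinely different way from the paper. Both proofs hinge on the same reflection substitution $\xi\mapsto 1-\xi$ together with evenness, which yields $\int_0^y f+\int_0^{1-y}f=\mean{f}$; but from there the routes diverge. The paper plugs $1-y$ into the single-integral representation \eqref{[[]]singleint}, substitutes, and reduces the oddness claim to an explicit identity both of whose sides equal $\mean{f}$ — i.e., the additive constant is computed away by hand. You instead work from \eqref{[[]]doubleint}, observe that the affine term $(\tfrac12-y)\mean{f}$ is antisymmetric about $y=\tfrac12$ while the double-integral term is constant, conclude that $\fluctint{f}(y)+\fluctint{f}(1-y)=D$ is constant, and then kill $D$ by integrating over the period and invoking the normalization $\mean{\fluctint{f}}=0$. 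This two-step structure (symmetrized sum is constant; constant vanishes by the zero-mean normalization) is cleaner bookkeeping: it isolates exactly where evenness enters, never requires evaluating the double-integral constant, and would apply verbatim to any operator of the form ``antiderivative plus $y$-affine correction, normalized to zero mean'' — including your suggested variant with $G(y)=\int_0^y\braced{f}$. The paper's version, by contrast, is self-contained at the level of \eqref{[[]]singleint} and does not need to quote the property $\mean{\fluctint{f}}=0$. Both are elementary and of comparable length; your normalization trick is the nicer takeaway.
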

\begin{proof}
By using \eqref{[[]]singleint}, the substitution $\xi=1-z$, and the fact that $f$ is even with respect to the midpoint, we have   
\[
\fluctint{f}(1-y)=\int_0^{1-y} f(\xi)d\xi  - \int_0^1 \left(\frac{1}{2} +(1-y) - \xi\right)f(\xi)d\xi=
\]
\[
=-\int_1^{y} f(1-z)d z  + \int_1^0 \left(\frac{1}{2} +(1-y) - (1-z)\right)f(1-z)d z=
\]
\[
=\int_y^{1} f(1-z)d z  - \int_0^1 \left(\frac{1}{2} +z-y\right)f(1-z)d z=\int_y^{1} f(z)d z  - \int_0^1 \left(\frac{1}{2} +z-y\right)f(z)d z.
\]
Therefore, our claim $\fluctint{f}(y)=-\fluctint{f}(1-y)$ is equivalent to
\[
\int_0^y f(\xi)d\xi  - \int_0^1 \left(\frac{1}{2} +y - \xi\right)f(\xi)d\xi=-\int_y^{1} f(\xi)d \xi  + \int_0^1 \left(\frac{1}{2} +\xi-y\right)f(\xi)d \xi,
\]
that is, to
\[
\int_0^y f(\xi)d\xi +\int_y^{1} f(\xi)d \xi = \int_0^1 \left(\frac{1}{2} +y - \xi\right)f(\xi)d\xi + \int_0^1 \left(\frac{1}{2} +\xi-y\right)f(\xi)d \xi,
\]
which is clear, since both sides are equal to $\mean{f}$.
\end{proof}
\begin{prop}\label{symmprop}
    Suppose that $f$ is even with respect to the midpoint. Then for any continuous function $\varphi$  we have
$$\mean{\,(\varphi\circ f)\cdot \fluctint{f}\,}=0.$$
\end{prop}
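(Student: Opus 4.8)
The plan is to reduce everything to the midpoint-symmetry bookkeeping already set up in Lemma \ref{oddlemma}. First I would record how the hypothesis on $f$ propagates through the composition with $\varphi$: since $f$ is even with respect to the midpoint and $\varphi$ acts pointwise, the function $\varphi\circ f$ is \emph{also} even with respect to the midpoint, because
\[
(\varphi\circ f)(1-y)=\varphi\big(f(1-y)\big)=\varphi\big(f(y)\big)=(\varphi\circ f)(y)
\]
for every $y\in[0,1]$, using \eqref{def:evenwrtmidpoint}.

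Next I would invoke Lemma \ref{oddlemma}, which tells us directly that $\fluctint{f}$ is \emph{odd} with respect to the midpoint, i.e.\ $\fluctint{f}(1-y)=-\fluctint{f}(y)$. The product of a midpoint-even function and a midpoint-odd function is midpoint-odd, so the integrand
\[
g(y):=(\varphi\circ f)(y)\cdot\fluctint{f}(y)
\]
satisfies $g(1-y)=-g(y)$ for all $y\in[0,1]$.

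Finally I would show that the integral average of any midpoint-odd function over $[0,1]$ vanishes. Applying the substitution $y\mapsto 1-y$ (which maps $[0,1]$ onto itself and has unit Jacobian) gives
\[
\mean{g}=\int_0^1 g(y)\,dy=\int_0^1 g(1-y)\,dy=-\int_0^1 g(y)\,dy=-\mean{g},
\]
whence $\mean{g}=0$, which is exactly the claim $\mean{\,(\varphi\circ f)\cdot\fluctint{f}\,}=0$.

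There is essentially no serious obstacle here: the entire content is carried by Lemma \ref{oddlemma}, and the remaining work is the elementary observation that even-times-odd (about the midpoint) integrates to zero. The only points that require a moment's care are confirming that the pointwise composition $\varphi\circ f$ genuinely preserves midpoint-evenness (so that continuity of $\varphi$, rather than any smoothness, is all that is needed) and that ``odd/even with respect to the midpoint'' refers to reflection about $y=\tfrac12$ rather than about the origin, so that the cancelling substitution is $y\mapsto 1-y$ and keeps the domain of integration fixed.
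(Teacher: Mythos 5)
Your proof is correct and follows essentially the same route as the paper: both arguments rest on Lemma \ref{oddlemma} to make $\fluctint{f}$ midpoint-odd, observe that $\varphi\circ f$ inherits midpoint-evenness, and then kill the integral via the reflection $y\mapsto 1-y$. The only cosmetic difference is that the paper splits the integral at $y=\tfrac12$ and cancels the two halves, whereas you apply the substitution over the whole interval to get $\mean{g}=-\mean{g}$; these are the same argument.
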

\begin{proof}
By splitting the integral and using the substitution
$y=1-z$, we can use Lemma \ref{oddlemma} in the second component and get
    \[\mean{(\varphi\circ f)\cdot \fluctint{f}}=
    \int_0^\frac{1}{2}\Big((\varphi\circ f)\cdot \fluctint{f}\Big)+
    \int_{\frac{1}{2}}^1 \Big((\varphi\circ f)\cdot \fluctint{f}\Big)   =
    \]
\[
\int_0^\frac{1}{2}\Big((\varphi\circ f)\cdot \fluctint{f}\Big)-\int_\frac{1}{2}^0 \varphi( f(1-z))\cdot \fluctint{f}(1-z)d z
    =    
\]
\[
\int_0^\frac{1}{2}\Big((\varphi\circ f)\cdot \fluctint{f}\Big)+\int_\frac{1}{2}^0 \varphi( f(z))\cdot \fluctint{f}(z)d z
    =0.    
\]
\end{proof}

\subsection{Periodic functions}
We can extend the results of Sections \ref{sec:averageop}--\ref{sec:fluctintop} to a larger class of functions as follows.  Here, we consider---sufficiently smooth---periodic functions $f:\mathbb{R}\to\mathbb{R}$ with (without loss of generality) period 1:
$$
    f(y+1) = f(y) = f(y-\lfloor y \rfloor)\quad\quad(y\in\mathbb{R}).
$$
For some $\sigma\in (0,1)$, let $f_\sigma$ denote the shifted function
$$
    f_\sigma(y) := f(y-\sigma)\quad\quad(y\in\mathbb{R}).
$$
For periodic functions, let $\mean{f}$ denote the integral average over a single period ($\mean{f} := \int_0^1 f$), and we define $\fluctint{f}$ again by \eqref{[[]]expldef} but this time for any $y\in\mathbb{R}$.

It is well-known that a shift does not alter the single-period average,
\begin{equation}\label{shiftdoesnotchange}
    \mean{f_\sigma}=\mean{f},
\end{equation}
since
\[
 \int_0^1 f(y-\sigma) d y= \int_{-\sigma}^{1-\sigma} f=\left(\int_{-\sigma}^{0}+\int_{0}^{1}+\int_{1}^{1-\sigma}\right) f=
    \left(\int_{-\sigma}^{0}+\int_{0}^{1}+\int_{0}^{-\sigma}\right) f=
    \int_0^1 f.
\]

To investigate the corresponding property for the operator $\fluctint{\cdot}$, note first that an antiderivative of a 1-periodic function is not necessarily 1-periodic. 
\begin{lem}\label{antiderperlemma}
Suppose that the function $g:\mathbb{R}\to\mathbb{R}$ is 1-periodic, and let $G(y):=\int_0^y g$. Then $G$ is 1-periodic if and only if $\mean{g}$=0.
\end{lem}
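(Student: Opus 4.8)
The plan is to reduce the entire statement to a single quantitative identity relating $G(y+1)$ and $G(y)$, from which both directions of the equivalence fall out instantly. First I would split the defining integral as
\[
G(y+1) = \int_0^{y+1} g = \int_0^1 g + \int_1^{y+1} g,
\]
recognizing the first piece as $\mean{g}$ by definition. The task then reduces to identifying the tail integral $\int_1^{y+1} g$.

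The key step is to show that this tail equals $G(y)$, and this is precisely where 1-periodicity of $g$ is used. Performing the substitution $\xi = z+1$ gives $\int_1^{y+1} g(\xi)\,d\xi = \int_0^y g(z+1)\,dz$, and since $g(z+1)=g(z)$ for all $z$, this is exactly $\int_0^y g = G(y)$. Combining the two observations yields the identity $G(y+1) = G(y) + \mean{g}$, valid for every $y\in\mathbb{R}$; equivalently, $G(y+1) - G(y) = \mean{g}$.

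With this identity established, the claimed equivalence is immediate. If $\mean{g}=0$, then $G(y+1)=G(y)$ for all $y$, so $G$ is 1-periodic. Conversely, if $G$ is 1-periodic, then evaluating at any single point (say $y=0$) gives $\mean{g} = G(1) - G(0) = 0$. There is essentially no obstacle here; the only point demanding a little care is the change of variables in the tail integral, where one must invoke periodicity of $g$ itself (rewriting $g(z+1)$ as $g(z)$), rather than periodicity of $G$. I would be sure to state the identity $G(y+1)-G(y)=\mean{g}$ explicitly, since it is the quantitative heart of the lemma and slightly more informative than the bare equivalence.
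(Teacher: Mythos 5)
Your proof is correct and follows essentially the same route as the paper: both split $G(y+1)$ at $1$, shift the tail integral by $1$ using the periodicity of $g$, and arrive at the identity $G(y+1)=G(y)+\mean{g}$, from which the equivalence is immediate. Your write-up merely makes both directions of the equivalence explicit, which the paper leaves implicit.
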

\begin{proof}
\[
G(y+1)=\int_0^{y+1} g=\int_0^{1} g+\int_1^{y+1} g=
\mean{g}+\int_0^{y} g(\xi+1)d \xi=
\mean{g}+\int_0^{y} g=\mean{g}+G(y).
\]
\end{proof}
\begin{lem}\label{lem:perlemma}
Suppose that $f:\mathbb{R}\to\mathbb{R}$ is 1-periodic. Then $\fluctint{f}$ is also 1-periodic.
\end{lem}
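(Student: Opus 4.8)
The plan is to reduce the statement directly to Lemma \ref{antiderperlemma}, which was just proved. The essential observation is that, up to an additive constant, the operator $\fluctint{\cdot}$ is nothing but the antiderivative of the \emph{fluctuating} part $\{f\}$, and this fluctuating part has vanishing single-period mean by its very construction.

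First I would record two elementary facts about the function $g := \{f\} = f - \mean{f}$. Since $f$ is $1$-periodic and $\mean{f}$ is a constant, $g$ is again $1$-periodic; and by the definition of the fluctuating part, $\mean{g} = \mean{\{f\}} = 0$. Next, using the explicit representation \eqref{[[]]expldef} (which is the formula used to extend $\fluctint{\cdot}$ to all of $\mathbb{R}$), I would write
\[
\fluctint{f}(y) = G(y) - C, \qquad G(y) := \int_0^y \{f\}(\xi)\, d\xi, \qquad C := \int_0^1\!\int_0^\tau \{f\}(\xi)\, d\xi\, d\tau,
\]
noting that $C$ is a fixed real number with no dependence on $y$.

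The decisive step is then to apply Lemma \ref{antiderperlemma} to $g = \{f\}$. Because $\mean{g} = 0$, that lemma guarantees that its antiderivative $G(y) = \int_0^y g$ is itself $1$-periodic. Consequently
\[
\fluctint{f}(y+1) = G(y+1) - C = G(y) - C = \fluctint{f}(y)
\]
for every $y \in \mathbb{R}$, which is exactly the claim.

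I do not anticipate a genuine obstacle here: the entire content is the identification of $\fluctint{f}$ with a shifted copy of the antiderivative of the zero-mean function $\{f\}$, after which periodicity is supplied by Lemma \ref{antiderperlemma}. The only point requiring a moment's care is to confirm that the subtracted quantity $C$ in \eqref{[[]]expldef} is a true constant---it is a double definite integral with no free occurrence of $y$---so that it cannot interfere with periodicity; everything else is immediate.
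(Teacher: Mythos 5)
Your proof is correct and follows essentially the same route as the paper's: both decompose $\fluctint{f}$ via \eqref{[[]]expldef} into the antiderivative of $\{f\}$ minus a constant, and both invoke Lemma \ref{antiderperlemma} with $g=\{f\}$ (using $\mean{\{f\}}=0$) to get periodicity of that antiderivative. Your write-up merely spells out the constancy of the double-integral term and the final periodicity check, which the paper leaves implicit.
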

\begin{proof}
    The first term on the right-hand side of \eqref{[[]]expldef} is 1-periodic in $y$ due to Lemma \ref{antiderperlemma} with the periodic function $g:=\{f\}$, since $\mean{g}=\mean{\{f\}}=0$.
\end{proof}
\begin{lem}\label{[[]]periodic}
Suppose that $f:\mathbb{R}\to\mathbb{R}$ is 1-periodic. Then the shift operator commutes with the operator $\fluctint{\cdot}$: 
\[
\fluctint{f_\sigma}  = \fluctint{f}_\sigma.
\]
\end{lem}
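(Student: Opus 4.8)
The plan is to show that the two $1$-periodic functions $\fluctint{f_\sigma}$ and $\fluctint{f}_\sigma$ coincide by verifying that they have the same derivative and the same (zero) mean over one period. Since a function on $\mathbb{R}$ is determined by its derivative up to an additive constant, and the mean fixes that constant, this reduces the identity to two short computations that both rely only on facts already established, namely Lemma \ref{lem:perlemma} and the shift-invariance \eqref{shiftdoesnotchange} of the single-period average.

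First I would record that the fluctuating part already commutes with the shift. Because a shift does not change the single-period average, we have $\mean{f_\sigma}=\mean{f}$, hence for every $y$
\[
\{f_\sigma\}(y)=f(y-\sigma)-\mean{f_\sigma}=f(y-\sigma)-\mean{f}=\{f\}(y-\sigma)=\{f\}_\sigma(y).
\]
Next, both $\fluctint{f_\sigma}$ and $\fluctint{f}_\sigma$ are $1$-periodic: the former by Lemma \ref{lem:perlemma} applied to $f_\sigma$, the latter because $\fluctint{f}$ is $1$-periodic (again Lemma \ref{lem:perlemma}) and a shift of a $1$-periodic function is $1$-periodic. Differentiating the explicit formula \eqref{[[]]expldef} shows that $\frac{d}{dy}\fluctint{g}(y)=\{g\}(y)$ for any admissible $g$, since the double-integral term in \eqref{[[]]expldef} is a constant. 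Thus $\frac{d}{dy}\fluctint{f_\sigma}(y)=\{f_\sigma\}(y)=\{f\}(y-\sigma)$, while by the chain rule $\frac{d}{dy}\fluctint{f}_\sigma(y)=\fluctint{f}'(y-\sigma)=\{f\}(y-\sigma)$. The two derivatives agree, so the two functions differ by a constant.

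It then remains only to pin down this constant, which is the sole point requiring any care. Both functions have zero mean over a period: $\fluctint{f_\sigma}$ by the very definition of $\fluctint{\cdot}$, and $\fluctint{f}_\sigma$ because $\mean{\fluctint{f}}=0$ together with the shift-invariance \eqref{shiftdoesnotchange} of the mean gives $\mean{\fluctint{f}_\sigma}=\mean{\fluctint{f}}=0$. A constant function with zero mean is identically zero, so the additive constant vanishes and $\fluctint{f_\sigma}=\fluctint{f}_\sigma$, as claimed. A direct alternative would be to substitute $\xi\mapsto\xi-\sigma$ in \eqref{[[]]expldef} and track the shifted limits of integration, but this bookkeeping is messier than the derivative-plus-mean argument and offers no real advantage.
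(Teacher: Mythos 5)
Your proof is correct, but it follows a genuinely different route from the paper's. The paper proves the identity by brute-force substitution: it writes both sides via the explicit formula \eqref{[[]]expldef}, performs the change of variables $\xi\mapsto\xi-\sigma$, expresses everything in terms of the antiderivative $F(y)=\int_0^y\{f\}$ (which is 1-periodic by Lemma \ref{antiderperlemma}), and reduces the claim to $\int_0^1 F(\tau-\sigma)\,d\tau=\int_0^1 F(\tau)\,d\tau$, i.e., \eqref{shiftdoesnotchange} applied to $F$ --- precisely the ``messier bookkeeping'' you chose to avoid. You instead characterize $\fluctint{g}$ as the unique $1$-periodic, zero-mean antiderivative of $\{g\}$ and check that both $\fluctint{f_\sigma}$ and $\fluctint{f}_\sigma$ satisfy this characterization for $g=f_\sigma$: same derivative (via $\{f_\sigma\}=\{f\}_\sigma$, which itself follows from \eqref{shiftdoesnotchange}), hence equal up to a constant, and the constant is killed by the zero-mean property, where you again invoke \eqref{shiftdoesnotchange}, this time applied to $\fluctint{f}$ rather than to $F$. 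Both arguments ultimately rest on the same two ingredients (shift-invariance of the mean and periodicity of $\fluctint{f}$ from Lemma \ref{lem:perlemma}), but yours is structurally cleaner and makes the uniqueness principle explicit, which is illuminating. The one trade-off is that your differentiation step leans on the fundamental theorem of calculus, so it uses the continuity of $f$ slightly more essentially than the paper's purely integral manipulation; under the standing smoothness assumptions of the Appendix this is immaterial, but the paper's computation would survive with weaker regularity.
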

\begin{proof} Let $F(y):=\int_0^y \{f\}$. Then 
$F(1)=0$, so due to Lemma \ref{antiderperlemma}, $F$ is 1-periodic. Then, by \eqref{[[]]expldef}, 
the claim of the lemma is equivalent to any of the following:
\[
\int_0^y \{f\}(\xi -\sigma ) \, d\xi -\int _0^1\int _0^{\tau }\{f\}(\xi
   -\sigma )d\xi d\tau =\int_0^{y-\sigma } \{f\}(\xi ) \, d\xi -\int
   _0^1\int _0^{\tau }\{f\}(\xi )d\xi d\tau,
\]
\[
\int_{-\sigma }^{y-\sigma } \{f\}(\xi ) \, d\xi -\int _0^1\int
   _{-\sigma }^{\tau -\sigma }\{f\}(\xi )d\xi d\tau =\int_0^{y-\sigma }
   \{f\}(\xi ) \, d\xi -\int _0^1\int _0^{\tau }\{f\}(\xi )d\xi d\tau,
\]
\[
F(y-\sigma )-F(-\sigma)-\int_0^1 (F(\tau -\sigma )-F(-\sigma )) \, d\tau =F(y-\sigma )-F(0)-\int_0^1 (F(\tau )-F(0)) \, d\tau, 
\]
\[
\int_0^1 F(\tau -\sigma ) \, d\tau =\int_0^1 F(\tau ) \, d\tau,
\]
which is just \eqref{shiftdoesnotchange} applied to $F$.
\end{proof}

We say that a 1-periodic function $f$ is \emph{translation-even} if there exists a shift
$\sigma\in (0,1)$ such that $f_\sigma$ is even with respect to the midpoint of the interval $[0,1]$ (see \eqref{def:evenwrtmidpoint}).  
\begin{prop}\label{translationevensymmprop}
    Suppose that $f$ is a translation-even 1-periodic function with shift $\sigma$. Then for any continuous function $\varphi$  we have
$$\mean{(\varphi\circ f)\cdot \fluctint{f}}=0.$$
\end{prop}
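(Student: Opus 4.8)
The plan is to reduce the statement to the already-established midpoint-symmetric case, Proposition \ref{symmprop}, by exploiting the very shift that makes $f$ even. Since $f$ is translation-even with shift $\sigma$, the shifted function $f_\sigma$ is even with respect to the midpoint of $[0,1]$. Proposition \ref{symmprop} then applies directly to $f_\sigma$, giving, for any continuous $\varphi$,
$$\mean{(\varphi\circ f_\sigma)\cdot\fluctint{f_\sigma}}=0.$$
The remainder of the argument is simply to transport this identity back to $f$ itself.

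First I would rewrite each of the two factors in terms of shifts of the original quantities. For the composition, note that $(\varphi\circ f_\sigma)(y)=\varphi(f(y-\sigma))=(\varphi\circ f)_\sigma(y)$, so $\varphi\circ f_\sigma=(\varphi\circ f)_\sigma$. For the second factor, I would invoke Lemma \ref{[[]]periodic}, which says that the shift commutes with $\fluctint{\cdot}$ on 1-periodic functions, giving $\fluctint{f_\sigma}=\fluctint{f}_\sigma$. This is the step where periodicity of $f$ is essential: by Lemma \ref{lem:perlemma} the function $\fluctint{f}$ is itself 1-periodic, so forming its shift $\fluctint{f}_\sigma$ is meaningful and the commutation is available.

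Next I would combine the two shifted factors into a single shift. Since shifting is a pointwise operation, $(u_\sigma\cdot v_\sigma)(y)=u(y-\sigma)v(y-\sigma)=(uv)_\sigma(y)$, so
$$(\varphi\circ f_\sigma)\cdot\fluctint{f_\sigma}=\big((\varphi\circ f)\cdot\fluctint{f}\big)_\sigma.$$
Finally I would apply \eqref{shiftdoesnotchange}, which states that a shift does not change the single-period average, to conclude
$$0=\mean{\big((\varphi\circ f)\cdot\fluctint{f}\big)_\sigma}=\mean{(\varphi\circ f)\cdot\fluctint{f}},$$
which is exactly the claim.

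The only genuine obstacle is a bookkeeping one: ensuring that the operator $\fluctint{\cdot}$ used in Proposition \ref{symmprop} (defined for functions on $[0,1]$) is consistent with the periodic version used here, and that ``even with respect to the midpoint'' for $f_\sigma$ holds on $[0,1]$ in precisely the sense Proposition \ref{symmprop} requires. Both operators are given by the same formula \eqref{[[]]expldef}, so they coincide on $[0,1]$, and the definition of translation-even supplies exactly the midpoint evenness needed. Everything else is a short chain linking the three facts \eqref{shiftdoesnotchange}, Lemma \ref{[[]]periodic}, and Proposition \ref{symmprop}, with no delicate estimates involved.
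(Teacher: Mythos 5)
Your proof is correct and follows essentially the same route as the paper's: both reduce the claim to Proposition \ref{symmprop} applied to $f_\sigma$ by combining the shift-invariance of the average \eqref{shiftdoesnotchange}, the commutation $\fluctint{f_\sigma}=\fluctint{f}_\sigma$ from Lemma \ref{[[]]periodic}, and the periodicity of $\fluctint{f}$ from Lemma \ref{lem:perlemma}. You merely run the chain of equalities in the opposite direction (from the known zero back to the target average), which is an immaterial difference.
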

\begin{proof}
Due to Lemma \ref{lem:perlemma}, $(\varphi\circ f)\cdot \fluctint{f}$ is 1-periodic, so \eqref{shiftdoesnotchange} applied to this function and Lemma \ref{[[]]periodic} yield that
\[
\mean{(\varphi\circ f)\cdot \fluctint{f}}=
\mean{\left((\varphi\circ f)\cdot \fluctint{f}\right)_\sigma}=\mean{(\varphi\circ f)_\sigma\cdot \fluctint{f}_\sigma}=
\mean{(\varphi\circ f)_\sigma\cdot \fluctint{f_\sigma}}=\mean{(\varphi\circ f_\sigma)\cdot \fluctint{f_\sigma}}, 
\]
therefore Proposition \ref{symmprop} applied to $f_\sigma$ completes the proof.
\end{proof}

Analogously, we say that a 1-periodic function $f$ is \emph{translation-odd} if there exists a shift
$\sigma\in (0,1)$ such that $f_\sigma$ is odd with respect to the midpoint of the interval $[0,1]$ (see \eqref{def:oddwrtmidpoint}).  

\begin{prop}\label{translationodddoublebracket}
    Suppose that $f$ is a translation-even 1-periodic function. Then $\fluctint{f}$ is translation-odd.
\end{prop}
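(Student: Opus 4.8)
The plan is to reduce this translation-invariant claim to the purely midpoint-symmetric statement already established in Lemma \ref{oddlemma}, exploiting the fact that $\fluctint{\cdot}$ commutes with shifts (Lemma \ref{[[]]periodic}). The whole argument is then a short chaining of these two lemmas, so I expect no genuine computational work; the proposition should fall out as a clean corollary.

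First I would record, via Lemma \ref{lem:perlemma}, that $\fluctint{f}$ is itself 1-periodic, so that asking whether it is translation-odd is meaningful. Next I would unwind the hypothesis: since $f$ is translation-even, there exists a shift $\sigma\in(0,1)$ for which $f_\sigma$ is even with respect to the midpoint, i.e.\ $f_\sigma(y)=f_\sigma(1-y)$ on $[0,1]$.

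The key step is to apply Lemma \ref{oddlemma} to the midpoint-even function $f_\sigma$, which gives that $\fluctint{f_\sigma}$ is odd with respect to the midpoint. Invoking the commutation identity $\fluctint{f_\sigma}=\fluctint{f}_\sigma$ from Lemma \ref{[[]]periodic} then transfers this symmetry onto the shifted function $\fluctint{f}_\sigma$. Hence $\fluctint{f}_\sigma$ is odd with respect to the midpoint, which is precisely the assertion that $\fluctint{f}$ is translation-odd, realized by the same shift $\sigma$.

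Since both ingredients are already proved, there is essentially no obstacle. The only point requiring a moment's care is the bookkeeping of the shift: the $\sigma$ that renders $f_\sigma$ midpoint-even must be the very same $\sigma$ appearing in the commutation identity. One should also observe that Lemma \ref{oddlemma}, stated for functions on $[0,1]$, legitimately applies to $f_\sigma$ because the defining formula \eqref{[[]]expldef} for $\fluctint{\cdot}$ agrees on $[0,1]$ with its periodic extension---exactly the same reduction used in the proof of Proposition \ref{translationevensymmprop}. With this understood, the proposition follows immediately from Lemmas \ref{oddlemma} and \ref{[[]]periodic}.
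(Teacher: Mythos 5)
Your proof is correct and follows essentially the same route as the paper's: apply Lemma \ref{oddlemma} to the midpoint-even function $f_\sigma$ and then transfer the resulting midpoint-oddness to $\fluctint{f}_\sigma$ via the shift-commutation identity $\fluctint{f_\sigma}=\fluctint{f}_\sigma$ of Lemma \ref{[[]]periodic}. The extra remarks on periodicity and on using the same $\sigma$ throughout are sound bookkeeping that the paper leaves implicit.
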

\begin{proof} We know that $\exists\  \sigma\in(0,1)$ such that  $f_\sigma$ is even with respect to the midpoint.
So, according to Lemma \ref{oddlemma}, $\fluctint{f_\sigma}$ is odd 
with respect to the midpoint. But this means, by using 
Lemma \ref{[[]]periodic}, that  $\fluctint{f}$ is translation-odd.
\end{proof}

\subsection{Examples}

When simplifying the coefficients in Section \ref{sec:homog}, we have applied several identities which follow from the above claims; below we highlight some of them. 

The first group of identities, Proposition \ref{Propfirstgroup}, is important also in eliminating the derivative $H'$ so that one can extend the results to non-smooth bathymetries $H$. 
\begin{prop}\label{Propfirstgroup} Suppose that the 1-periodic function $H$ is positive. Then
\begin{itemize}
    \item $\mean{H^{-5}\cdot (\fluctint{H^{-1}})^2\cdot H'}=\frac{1}{2}\mean{H^{-5}\cdot  \fluctint{H^{-1}}}-\frac{1}{2}\avg{H^{-1}}\mean{H^{-4}\cdot  \fluctint{H^{-1}}}$
    
    \item $\mean{H^{-3}\cdot (\fluctint{H^{-2}})^2\cdot H'}=\mean{H^{-4}\cdot\fluctint{H^{-2}}}$

    \item $\mean{H^{-1}\cdot\fluctint{\fluctint{H^{-3}\cdot\fluctint{\fluctint{H^{-1}}}\cdot  H'}}}=\mean{H^{-2}\cdot\fluctint{H^{-1}}\cdot\fluctint{\fluctint{H^{-1}}}}$

    \item $\mean{H^{-3}\cdot\fluctint{H^{-2}}\cdot\fluctint{\fluctint{H^{-1}}}\cdot H'}=\frac{1}{2}\mean{H^{-2}\cdot\fluctint{H^{-1}}\fluctint{H^{-2}}}-\frac{1}{2}\mean{\fluctint{H^{-1}}\fluctint{H^{-4}}}+\frac{1}{2}\mean{H^{-2}}\mean{\fluctint{H^{-1}}\fluctint{H^{-2}}}$

    \item $\mean{H^{-1}\cdot\fluctint{\fluctint{H^{-4}\cdot\fluctint{H^{-1}}\cdot H'}}}=\frac{1}{3}\mean{H^{-3}\cdot(\fluctint{H^{-1}})^2}-\frac{1}{3}\mean{\fluctint{H^{-1}}\fluctint{H^{-4}}}+\frac{1}{3}\mean{H^{-1}}\mean{\fluctint{H^{-1}}\fluctint{H^{-3}}}$

 \item $\mean{H^{-4}\cdot\fluctint{H^{-2}}\cdot\fluctint{H^{-1}}\cdot H'}=\\ \frac{1}{3}\mean{H^{-4}\cdot\fluctint{H^{-2}}}-\frac{1}{3}\mean{H^{-1}}\mean{H^{-3}\cdot\fluctint{H^{-2}}}+\frac{1}{3}\mean{H^{-5}\cdot\fluctint{H^{-1}}}- \frac{1}{3}\mean{H^{-2}}\mean{H^{-3}\cdot\fluctint{H^{-1}}}$
    
\end{itemize}
\end{prop}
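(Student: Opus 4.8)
The common feature of all six identities is a factor $H'$ (sometimes buried inside one or more $\fluctint{\cdot}$ operators) that must be eliminated. The plan is to remove it by an exact-derivative argument: since every product of powers of $H$ with $\fluctint{\cdot}$-operators is $1$-periodic (Lemma \ref{lem:perlemma}), the single-period average of its $y$-derivative vanishes, $\mean{\frac{d}{dy}S}=0$. Expanding this relation by the product rule produces the wanted $H'$-term together with several $H'$-free terms, which can then be matched to the right-hand side. Two elementary facts drive every computation: $\fluctint{f}'=\braced{f}=f-\mean{f}$, and, for $m\ge 1$, $H^{-m-1}H'=-\frac{1}{m}(H^{-m})'$.

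For the two identities in which $H'$ is nested inside brackets (the third and fifth), I would first strip the outer operators using the integration-by-parts rule \eqref{integrationbypartsfor[[]]}. For instance, in the third identity, writing $X:=H^{-3}\fluctint{\fluctint{H^{-1}}}H'$ and applying \eqref{integrationbypartsfor[[]]} twice gives
\[
\mean{H^{-1}\fluctint{\fluctint{X}}}=-\mean{\fluctint{H^{-1}}\fluctint{X}}=\mean{\fluctint{\fluctint{H^{-1}}}\,X}=\mean{H^{-3}\bigl(\fluctint{\fluctint{H^{-1}}}\bigr)^2 H'},
\]
so the problem reduces to a top-level average of the form $\mean{H^{-a}(\text{bracket factors})\,H'}$, exactly as in the remaining four identities.

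With the $H'$-term at top level, I would choose as seed the product $S:=H^{-(a-1)}\cdot(\text{the same bracket factors})$ and expand $\mean{S'}=0$. The term in which the derivative hits $H^{-(a-1)}$ reproduces, up to the constant $-\tfrac{1}{a-1}$, the desired $H'$-average, while the terms in which the derivative hits a bracket factor $\fluctint{\,\cdot\,}$ replace it by its fluctuation $\braced{\,\cdot\,}$. Thus for the third identity the seed $S=H^{-2}\bigl(\fluctint{\fluctint{H^{-1}}}\bigr)^2$ yields at once $\mean{H^{-3}(\fluctint{\fluctint{H^{-1}}})^2 H'}=\mean{H^{-2}\fluctint{H^{-1}}\fluctint{\fluctint{H^{-1}}}}$ (the inner derivative being $\fluctint{\fluctint{H^{-1}}}'=\fluctint{H^{-1}}$, which is already mean-free). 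For the second, the seed $H^{-2}(\fluctint{H^{-2}})^2$ produces $\mean{H^{-4}\fluctint{H^{-2}}}-\mean{H^{-2}}\mean{H^{-2}\fluctint{H^{-2}}}$, and the spurious second term is killed by property \eqref{fbf0}. The first, fourth and sixth identities follow analogously from the seeds $H^{-4}(\fluctint{H^{-1}})^2$, $H^{-2}\fluctint{H^{-2}}\fluctint{\fluctint{H^{-1}}}$, and $H^{-3}\fluctint{H^{-2}}\fluctint{H^{-1}}$, respectively.

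The only remaining work is bookkeeping in the $H'$-free remainder, and this is where the main difficulty lies, especially for the four-term identities. The leftover averages typically appear in the form $\mean{H^{-a}\fluctint{\fluctint{H^{-1}}}}$ or $\mean{H^{-a}\fluctint{f}}$, and I would rewrite each once more with \eqref{integrationbypartsfor[[]]}, e.g.\ $\mean{H^{-a}\fluctint{\fluctint{H^{-1}}}}=-\mean{\fluctint{H^{-a}}\fluctint{H^{-1}}}$, to bring them into the symmetric double-bracket form that appears on the right-hand sides; the constant factors $\mean{H^{-1}}$ and $\mean{H^{-2}}$ split off through the linearity of $\mean{\,\cdot\,}$. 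Verifying that all signs and rational coefficients combine to the stated right-hand side is routine but error-prone, so I would cross-check each identity against the symbolic computation already used for the homogenized coefficients.
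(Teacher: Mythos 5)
Your proposal is correct and takes essentially the same route as the paper: your relation $\mean{S'}=0$ for a well-chosen $1$-periodic seed $S$ is algebraically identical to the paper's argument, which writes the indefinite integral $I$ of the $H'$-term, integrates by parts so that $I$ reappears with a constant factor, solves for $I$, and then uses periodicity (Lemma \ref{lem:perlemma}) together with $\fluctint{f}'=f-\mean{f}$ to kill the boundary term. Your preliminary stripping of the outer brackets in the nested identities via \eqref{integrationbypartsfor[[]]}, and the final conversion of leftover averages such as $\mean{H^{-a}\fluctint{\fluctint{H^{-1}}}}=-\mean{\fluctint{H^{-a}}\fluctint{H^{-1}}}$, likewise mirror the conversion identities the paper states alongside this proposition.
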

\begin{proof}
    We prove only the first identity; the others follow similarly. Let us consider its left-hand side and denote the corresponding indefinite integral by $I$:
\[
I:=\int \left(H^{-5}\cdot (\fluctint{H^{-1}})^2 \cdot H'\right).
\]
    By using integration by parts, we obtain
\begin{equation}\label{IBP}
I=H^{-5}\cdot (\fluctint{H^{-1}})^2 \cdot H-\int \left(\left(H^{-5}\cdot (\fluctint{H^{-1}})^2\right)' \cdot H\right).
\end{equation}
Inside the integral on the right, by using the product rule, we have
\[
\left(H^{-5}\cdot (\fluctint{H^{-1}})^2\right)'\cdot H=2H^{-4}\cdot\fluctint{H^{-1}}\cdot\fluctint{H^{-1}}'-5H^{-5}(\fluctint{H^{-1}})^2\cdot H'.
\]
Notice that the representation \eqref{[[]]doubleint} and the fundamental theorem of calculus show that $\fluctint{f}'=f-\avg{f}$, hence
$\fluctint{H^{-1}}'=H^{-1}-\avg{H^{-1}}$. Plugging these into \eqref{IBP} yields
\[
I=H^{-4}\cdot (\fluctint{H^{-1}})^2-2\int \left(H^{-5}\cdot\fluctint{H^{-1}}\right)+2\avg{H^{-1}}\int \left(H^{-4}\cdot\fluctint{H^{-1}}\right)+5 I,
\]
from which the unknown integral $I$ can easily be expressed as
\[
I=-\frac{1}{4}H^{-4}\cdot (\fluctint{H^{-1}})^2+\frac{1}{2}\int \left(H^{-5}\cdot\fluctint{H^{-1}}\right)-\frac{1}{2}\avg{H^{-1}}\int \left(H^{-4}\cdot\fluctint{H^{-1}}\right).
\]
Now, when we switch to definite integrals $\int_0^1$, the first term on the right, $-\frac{1}{4}H^{-4}\cdot (\fluctint{H^{-1}})^2$, disappears, since $H$ is 1-periodic, and, due to Lemma \ref{lem:perlemma}, $\fluctint{H^{-1}}$ is also 1-periodic. In this way, we recover the desired right-hand side.
\end{proof}
\begin{rem}
     It would be interesting to find some general rules that include the different examples above as special cases.
\end{rem}
The second group of identities shows some conversion rules which can be repeatedly applied until we find an expression that appears on the left-hand side of one of the formulae in Proposition \ref{Propfirstgroup}.
\begin{rem}
   Repeated application of these identities in the code is achieved by invoking Wolfram \textit{Mathematica}'s powerful \emph{{\texttt{ReplaceRepeated}}} command.
\end{rem}
\begin{prop} Suppose that the 1-periodic function $H$ is positive. Then
\begin{itemize}
\item $\mean{H^{-3}\cdot(\fluctint{\fluctint{H^{-1}}})^2\cdot H'}=\mean{H^{-1}\cdot\fluctint{\fluctint{H^{-3}\cdot\fluctint{\fluctint{H^{-1}}}\cdot H' }})}$ 

\item $\mean{H^{-1}\cdot\fluctint{\fluctint{H^{-3}\cdot\fluctint{H^{-2}}\cdot H' }})}=-\mean{\fluctint{H^{-1}}\cdot \fluctint{H^{-3}\cdot \fluctint{H^{-2}} \cdot H'}}=\mean{H^{-3}\cdot\fluctint{H^{-2}}\cdot\fluctint{\fluctint{H^{-1}}}\cdot H'}$

\item $\mean{H^{-2}\cdot\fluctint{H^{-4}\cdot\fluctint{H^{-1}} \cdot H'}}=-\mean{H^{-4}\cdot\fluctint{H^{-2}} \cdot\fluctint{H^{-1}} \cdot H'}$

\end{itemize}
\end{prop}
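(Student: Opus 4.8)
The three identities share a single mechanism, and the plan is to derive all of them by repeated use of the integration-by-parts formula \eqref{integrationbypartsfor[[]]}, $\mean{f\fluctint{g}}=-\mean{\fluctint{f}g}$, each application of which strips one outer $\fluctint{\cdot}$ off the first factor and transfers it onto the remaining product. Before doing so I would record the two facts that make the manipulation legitimate in the present nested, periodic setting. First, \eqref{integrationbypartsfor[[]]} holds for \emph{any} sufficiently smooth 1-periodic $f$ and $g$: since $\fluctint{f}'=f-\mean{f}$ (from \eqref{[[]]doubleint}) and $\fluctint{f}(0)=\fluctint{f}(1)$ because $\{f\}$ has zero mean, ordinary integration by parts over one period generates no boundary term, and the two leftover terms vanish because $\mean{\fluctint{f}}=0$. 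Second, by Lemma \ref{lem:perlemma} every argument of a $\fluctint{\cdot}$ below---such as $\fluctint{H^{-1}}$, $\fluctint{H^{-2}}$, $\fluctint{\fluctint{H^{-1}}}$, and the composite functions $H^{-3}\fluctint{H^{-2}}H'$ and $H^{-3}\fluctint{\fluctint{H^{-1}}}H'$---is again 1-periodic (using $H>0$ and smoothness), so the formula is applicable at each stage.

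I would start with the third identity, which requires a single step. Writing its left side as $\mean{H^{-2}\,\fluctint{\,H^{-4}\fluctint{H^{-1}}H'\,}}$ and applying \eqref{integrationbypartsfor[[]]} with $f=H^{-2}$ and $g=H^{-4}\fluctint{H^{-1}}H'$ moves the bracket and gives $-\mean{\fluctint{H^{-2}}\,H^{-4}\fluctint{H^{-1}}H'}$, which is the claimed right side after reordering the factors.

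For the second (chain) identity I would peel the two nested brackets off one at a time. From $\mean{H^{-1}\,\fluctint{\fluctint{\,H^{-3}\fluctint{H^{-2}}H'\,}}}$, applying \eqref{integrationbypartsfor[[]]} with $f=H^{-1}$ yields the middle expression $-\mean{\fluctint{H^{-1}}\,\fluctint{\,H^{-3}\fluctint{H^{-2}}H'\,}}$; a second application, with $f=\fluctint{H^{-1}}$ and $g=H^{-3}\fluctint{H^{-2}}H'$, produces $\mean{\fluctint{\fluctint{H^{-1}}}\,H^{-3}\fluctint{H^{-2}}H'}$, i.e.\ the right member. The first identity is entirely analogous: beginning from $\mean{H^{-1}\,\fluctint{\fluctint{\,H^{-3}\fluctint{\fluctint{H^{-1}}}H'\,}}}$, two successive applications (first $f=H^{-1}$, then $f=\fluctint{H^{-1}}$) give $\mean{\fluctint{\fluctint{H^{-1}}}\,H^{-3}\fluctint{\fluctint{H^{-1}}}H'}=\mean{H^{-3}(\fluctint{\fluctint{H^{-1}}})^2H'}$, which is precisely its left side.

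I do not expect any genuine analytic difficulty; the work is purely formal. The only things to keep straight are the sign bookkeeping---each transfer contributes a factor $-1$, so the single-step third identity and the single-step middle term of the second identity carry a flipped sign, whereas the two-step passages (to the right member of the second identity and throughout the first identity) restore the original sign---and the correct choice of which factor plays the role of $f$ and which the composite $g=H^{-3}(\cdots)H'$ at each stage. The single point I would emphasize as needing explicit justification, rather than calculation, is the validity of \eqref{integrationbypartsfor[[]]} in this nested periodic context, namely that every operand of $\fluctint{\cdot}$ is 1-periodic so that the boundary contributions drop out; this is exactly what Lemma \ref{lem:perlemma} guarantees.
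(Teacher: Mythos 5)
Your proof is correct and follows essentially the same route as the paper: each identity is obtained by regrouping the factors and repeatedly applying the integration-by-parts formula \eqref{integrationbypartsfor[[]]}, with the sign flipping once per transfer of a bracket. The only differences are cosmetic---you work the chains in the reverse direction, prove all three identities rather than only the first, and spell out the periodicity justification (via Lemma \ref{lem:perlemma}) that the paper leaves implicit.
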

\begin{proof}
We prove only the first identity; the other proofs are clearly analogous. We regroup and apply  \eqref{integrationbypartsfor[[]]} twice to get 
\[
\mean{H^{-3}\cdot(\fluctint{\fluctint{H^{-1}}})^2\cdot H'}=\mean{\fluctint{\fluctint{H^{-1}}}\cdot H^{-3}\cdot \fluctint{\fluctint{H^{-1}}}\cdot H'}=
\]
\[
-\mean{\fluctint{H^{-1}}\cdot \fluctint{H^{-3}\cdot \fluctint{\fluctint{H^{-1}}}\cdot H'}}=
\mean{H^{-1}\cdot \fluctint{\fluctint{H^{-3}\cdot \fluctint{\fluctint{H^{-1}}}\cdot H'}}}.
\]
\end{proof}

Finally, under a certain symmetry assumption, many expressions will vanish according to the following.
\begin{prop} Suppose that the 1-periodic function $H$ is positive and translation-even. Then
\begin{itemize}
    \item $\mean{\fluctint{H^{-1}}\cdot\fluctint{\fluctint{H^{-2}}}}=0$
    \item $\mean{H^{-1}\cdot\fluctint{H^{-k}}}=0$ \quad for any $k\in\mathbb{N}^+$.
\end{itemize}
\end{prop}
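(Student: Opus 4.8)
The plan is to treat the two identities separately, in each case reducing to the midpoint-symmetry results of the Appendix after translating by the common shift $\sigma$ that translation-evenness supplies.

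For the second identity I would exploit the positivity of $H$ together with Proposition~\ref{translationevensymmprop}. Since $H$ is translation-even with some shift $\sigma$, the positive function $f:=H^{-k}$ is translation-even with the \emph{same} $\sigma$, because raising to a power commutes with the reflection $y\mapsto 1-y$. Choosing $\varphi(t):=t^{1/k}$, which is continuous on $(0,\infty)$ and hence on the range of the positive function $H^{-k}$, we have $\varphi\circ f=H^{-1}$, so Proposition~\ref{translationevensymmprop} applied to $f$ gives $\mean{H^{-1}\cdot\fluctint{H^{-k}}}=\mean{(\varphi\circ f)\cdot\fluctint{f}}=0$ at once.

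For the first identity the plan is a parity count. First I would record an \emph{odd-to-even} companion of Lemma~\ref{oddlemma}: if $f$ is odd with respect to the midpoint (see \eqref{def:oddwrtmidpoint}), then $\fluctint{f}$ is even with respect to the midpoint. This follows by the same computation as in Lemma~\ref{oddlemma}: starting from the representation \eqref{[[]]singleint}, substituting $\xi=1-z$ and using $f(1-z)=-f(z)$ together with $\mean{f}=0$ (which is automatic for odd functions), one checks that $\fluctint{f}(1-y)-\fluctint{f}(y)$ collapses to a combination of $\mean{f}=0$ terms and hence vanishes. Combining this with Lemma~\ref{[[]]periodic} upgrades it to the translation setting, with the shift preserved: a translation-odd function has a translation-even $\fluctint{\cdot}$. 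Proposition~\ref{translationodddoublebracket} and this new statement then give, for the translation-even functions $H^{-1}$ and $H^{-2}$ sharing the common shift $\sigma$, that $\fluctint{H^{-1}}$ is translation-odd while $\fluctint{\fluctint{H^{-2}}}$ is translation-even.

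Finally I would conclude from two elementary observations about the reflection $y\mapsto 1-y$: the product of a translation-odd and a translation-even function (sharing the shift $\sigma$) is translation-odd, and the single-period mean of any translation-odd function is zero, since by \eqref{shiftdoesnotchange} its mean equals that of its shift, which is odd with respect to the midpoint and therefore integrates to zero. Applying these to $\fluctint{H^{-1}}\cdot\fluctint{\fluctint{H^{-2}}}$ yields $\mean{\fluctint{H^{-1}}\cdot\fluctint{\fluctint{H^{-2}}}}=0$. The only genuinely new ingredient, and the step I expect to need the most care, is the odd-to-even lemma above; once it is in place, both identities reduce to short bookkeeping with shifts.
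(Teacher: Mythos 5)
Your proposal is correct and takes essentially the same route as the paper, which likewise combines Proposition~\ref{translationodddoublebracket} with the translated even/odd parity argument (the mean of a product of a midpoint-even and a midpoint-odd function vanishes). The only differences are cosmetic: you state explicitly the odd-to-even companion of Lemma~\ref{oddlemma} needed to handle $\fluctint{\fluctint{H^{-2}}}$, a step the paper's terse proof leaves implicit, and for the second identity you invoke Proposition~\ref{translationevensymmprop} with $\varphi(t)=t^{1/k}$ rather than running the parity count directly---both being the same underlying symmetry argument.
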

\begin{proof}
   We use Proposition \ref{translationodddoublebracket} and the ``translated version'' of the following elementary fact (cf.~the proof of Proposition \ref{symmprop}): the definite integral of the product of an even and an odd function over an interval symmetric about the origin vanishes.
\end{proof}

\section{Coefficients of the homogenized equations} \label{sec:coefficients}
In this section we provide the coefficients and some other details of the
high-order homogenized equations that are too lengthy for main text.
The coefficients of \eqref{o4avg-xxt} are given by
\begin{subequations}
\begin{align}
    \mu & = \frac{\mean{\fluctint{H^{-1}}^2}}{\mean{H^{-1}}^2} \label{mu} \\
    \gamma & = \frac{\avg{\fluctint{H^{-1}}\fluctint{H^{-2}}}}{\Hm{-1}^2} \label{gamma} \\
    \nu_1 & = \frac{\avg{H^{-1}(\fluctint{\fluctint{H^{-1}}})^2}}{\Hm{-1}^3} \\
    \nu_2 & = 3 \frac{\avg{(\fluctint{\fluctint{H^{-1}}})^2}}{\Hm{-1}^2} \\
    \alpha_1 & = \frac{2}{\mean{H^{-1}}^2} \left( \mean{H^{-2}}^2 - 2 \mean{H^{-3}} \ \mean{H^{-1}} \right) \label{eq:alpha1}\\
    \alpha_2 & = \frac{3\mean{H^{-2}}^2 - 2 \mean{H^{-1}} \ \mean{H^{-3}} - 3\mean{H^{-4}}}{2\Hm{-1}^2} \label{eq:alpha2}\\
    \alpha_3 & =\frac{1}{\mean{H^{-1}}^3}\left( \mean{H^{-2}}^2 - \mean{H^{-3}} \ \mean{H^{-1}} \right) \label{eq:alpha3}\\
    \alpha_4 & = \frac{3\Hm{-2}^3  - 4\Hm{-1}\Hm{-2}\Hm{-3} - 3 \Hm{-2}\Hm{-4} + 4\Hm{-1}\Hm{-5}}{\Hm{-1}^2} \\
    \alpha_5 & =  \frac{2\Hm{-2}^3 - 6 \Hm{-1}\Hm{-2}\Hm{-3} + 6\Hm{-1}^2\Hm{-4}}{\Hm{-1}^3}  \\
    \alpha_6 & =  \frac{ 3\Hm{-2}^3 - 7 \Hm{-1}\Hm{-2}\Hm{-3} + 3 \Hm{-1}^2\Hm{-4} - 3\Hm{-2}\Hm{-4} + 6\Hm{-1}\Hm{-5} }{\Hm{-1}^3}  \\ \\
    \alpha_7 & = \frac{\Hm{-2}^3 - 2 \Hm{-1}\Hm{-2}\Hm{-3} + \Hm{-1}^2\Hm{-4}}{\Hm{-1}^4}   \\
    \alpha_8    & =  2\left(\mu \frac{\Hm{-2}}{\Hm{-1}} - \gamma\right) \\
    \alpha_9    & =  \mu \frac{\Hm{-2}}{\Hm{-1}}.
    %\tilde{\alpha}_{10} & =  2 c^2 \left( \mu \frac{\Hm{-2}}{\Hm{-1}} - \gamma \right)  \\
    %\tilde{\alpha}_{11} & = 2\mu \frac{\Hm{-2}}{\Hm{-1}}.
\end{align}
\end{subequations}
In order to simplify the expression of the 5th-order nonlinear terms, we consider
the special case of translation-even periodic functions $H(y)$, which include both
of the examples given in the paper (piecewise-constant and sinusoidal bathymetry).
Then the nonlinear 5th-order terms are given by
\begin{align*}
    F(\heta,\hq) & = \beta_1 \hq^4 \heta_x + \beta_2 \heta^4 \heta_x + \beta_3 \heta^2 \ \hq^2 \heta_x + \beta_4 \heta \ \hq^3 \hq_x + \beta_5 (\heta_x)^3 + \beta_6 \heta \ \heta_x \heta_{xx} + \beta_7 \heta^2 \heta_{xxx} \\
     &  + \beta_8 \heta_x \hq \ \hq_{xx} + \beta_9 \hq \ \heta^3 \hq_x + \beta_{10} \heta_{xx} \hq \ \hq_x + \beta_{11} \heta_x (\hq_x)^2 + \beta_{12} \hq^2 \heta_{xxx} + \beta_{13} \heta \ \hq_x \hq_{xx} + \beta_{14} \heta \ \hq \ \hq_{xxx} %+ \beta_{16} \hq^2 ((\hq_x)^2 - c^{-2} (\heta_x)^2)
\end{align*}
where
\begin{align*}
    \beta_1 & = \frac{1}{c^2}\left( \theta_3^2-\frac{21}{4}\theta_2^2 \theta_3 + \frac32\theta_2\theta_4 + \frac32\theta_3\hat{\theta}_4+\frac{15}{2}\theta_2\hat{\theta}_5 - \frac52\hat{\theta}_6-\frac{15}{4}\theta_7\Hm{-1}^{-2} + \frac{9}{4}\left(\theta_2^2-\hat{\theta}_4 \right)^2 \right) \\
    \beta_2 & = c^2\left( \theta_2^4-3\theta_2^2\theta_3+\theta_3+2\theta_2\theta_4-\theta_5 \right) \\
    \beta_3 & = - 6\theta_5 -15\hat{\theta}_6 + \frac{9}{2}\theta_2^4-16\theta_2^2\theta_3 + 7\theta_3^2 + 12\theta_2\theta_4-\frac{9}{2}\theta_2^2\hat{\theta}_4+3\theta_3\hat{\theta}_4+12\theta_2\hat{\theta}_5 \\
    \beta_4 & = \frac{1}{c^2}\left( -20\hat{\theta}_6 + 6\theta_2^4-22\theta_2^2\theta_3+8\theta_3^2+12\theta_2\theta_4-6\theta_2^2\hat{\theta}_4+6\theta_3\hat{\theta}_4+16\theta_2\hat{\theta}_5 \right) \\
    \beta_5 & = c^2\left( -2\zeta_{13} + \zeta_{122} + 2 \zeta_{212} + \zeta_{311} + 3\zeta_{14} - 3\gamma\theta_2 - \zeta_{22} + 8\mu\theta_2^2-2\mu\theta_3-3\mu\hat{\theta}_4 \right) \\
    \beta_6 & = c^2\left(-16 \gamma \theta_2+26 \mu\theta_2^2-10 \mu\theta_3 \right) \\
    %\alpha_{21} & = \frac{1}{c^2} (2\zeta_{13} + \zeta_{22}) \\
    \beta_7 & = c^2 \left(2\zeta_{13} + \zeta_{22} -6\gamma\theta_2+5\mu\theta_2^2-2\mu\theta_3 \right) \\
    \beta_8 & = 4\zeta_{122} + 8\zeta_{212} + 4\zeta_{311} + 12\zeta_{14} - 12\gamma\theta_2 - 2 \zeta_{22}-4\zeta_{13}+27\mu\theta_2^2-6\mu\theta_3-9\mu\hat{\theta}_4 \\
    \beta_9 & = 2\theta_2^4-8\theta_2^2\theta_3+4\theta_3^2+8\theta_2\theta_4-8\theta_5 \\
    \beta_{10} & = -4\zeta_{13} - 2\zeta_{22} - 8\gamma\theta_2+28\mu\theta_2^2-12\mu\theta_3 \\
    \beta_{11} & = 2\zeta_{13} + \zeta_{22} - 12\gamma \theta_2 + 22\mu\theta_2^2 - 10\mu\theta_3 \\
    \beta_{12} & = \zeta_{122}+2\zeta_{212}+\zeta_{311}+3\zeta_{14}-3\gamma\theta_2+\mu\theta_3 -\zeta_{22} -2\zeta_{13} +7\mu\theta_2^2-2\mu\theta_3-3\mu\hat{\theta}_4 \\
    \beta_{13} & = 8\zeta_{13} + 4\zeta_{22} -28\gamma\theta_2+24\mu\theta_2^2-8\mu\theta_3 \\
    \beta_{14} & = -8\gamma\theta_2+10\mu\theta_2^2-4\mu\theta_3 \\
    %\beta_{15} & = \frac{-2C2b3-g^2\alpha_{14}}{\Hm{-1}^2} \\
    %\beta_{16} & = C2b3/(g\Hm{-1}) \\
    \theta_j & = \Hm{-j}/\Hm{-1} \\
    \hat{\theta}_j & = \Hm{-j}/\Hm{-1}^2 \\
    \zeta_{13} & = \mean{\fluctint{H^{-1}}\fluctint{H^{-3}}}/\Hm{-1}^2 \\
    \zeta_{14} & = \mean{\fluctint{H^{-1}}\fluctint{H^{-4}}}/\Hm{-1}^3 \\
    \zeta_{22} & = \mean{\fluctint{H^{-2}}^2}/\Hm{-1}^2 \\
    \zeta_{212} & = \mean{H^{-2}\fluctint{H^{-1}}\fluctint{H^{-2}}}/\Hm{-1}^3 \\
    \zeta_{122} & = \mean{H^{-1}\fluctint{H^{-2}}^2}/\Hm{-1}^3 \\
    \zeta_{311} & = \mean{H^{-3}\fluctint{H^{-1}}^2}/\Hm{-1}^3.
\end{align*}
Most of the coefficients of \eqref{o4avg} are already given above; those that differ are 
\begin{align*}
    \hat{\alpha}_8 & = -4\gamma + 10\mu \theta_2 \\ %\frac{4C_{11}\Hm{-1}-10 C_3\Hm{-2}}{\Hm{-1}^3}   \\
    \hat{\alpha}_9 & = 8\mu \theta_2\Hm{-1}^{-1} \\ %-\frac{8C_3 \Hm{-2}}{\Hm{-1}^4}   \\
    \hat{\alpha}_{10} & = (3\mu \theta_2 - 2\gamma) \Hm{-1}^{-1} \\ % \frac{2C_{11}\Hm{-1}-3C_3 \Hm{-2}}{\Hm{-1}^4}   \\
    \hat{\alpha}_{11} & = 4 \mu \theta_2 \\ %-\frac{4C_3\Hm{-2}}{\Hm{-1}^3}.
\end{align*}
Finally, the additional coefficients appearing in \eqref{o4avg-ttt} are
\begin{align*}
    \hat{\alpha}_4 & = \frac{4\Hm{-5} - 2\Hm{-2}\Hm{-3} }{\Hm{-1}^2} \\
    \hat{\alpha}_6 & =  \frac{ 5 \Hm{-2}\Hm{-3} - 3 \Hm{-1}\Hm{-4} - 6\Hm{-5} }{\Hm{-1}^2}.
    %\alpha_5 & = 6 \frac{\Hm{-4}}{\Hm{-1}}  \\
    %\alpha_7 & = -\frac{\Hm{-4}}{\Hm{-1}}   \\
    %C_3 & = -\mean{\fluctint{H^{-1}}^2}  \\
    %C_{11} & = -\mean{\fluctint{H^{-1}}\fluctint{H^{-2}}}.
\end{align*}

\begin{rem}\label{rem:alpha2}
It can be proved that $\alpha_3\leq 0$, $\alpha_1 < 0$, and $\alpha_2 < 0$, for arbitrary positive profiles $H$. In particular, 
 equalities hold only if the function $H$ is constant.
We can start by proving that   $\alpha_3 \leq 0$.
From Eq.~\eqref{eq:alpha3}, the numerator $\mathcal{N}_3$ of $\alpha_3$ is given by
\[
    \mathcal{N}_3 = \mean{H^{-2}}^2 -  \mean{H^{-1}} \ \mean{H^{-3}}.
\]
while the denominator is positive. 
The sign of $\mathcal{N}_3$ is a consequence of the following theorem of real analysis \cite[Proposition 6.1]{folland1999real}.
\end{rem}

%We can prove this by splitting $\alpha_2$ as \lajos{Something is strange here. In the original $\alpha_2$ we have an equal amount of $\mean{H^{-2}}^2$ and $\mean{H^{-4}}$ in the numerator, but here we don't.}

\begin{prop}\label{prop:folland}
    If $0 < p < q < r \leq \infty$,  then $L^p\cap L^r \subset L^q$ and 
    $\|f\|_q \leq \|f\|_p^{\lambda} \|f\|_r^{1-\lambda}$, where 
    %\lajos{I suspect there is a typo here: it should be $q^{-1}$ and not $q{-1}$.}
    \[
        \lambda = \frac{q^{-1}-r^{-1}}{p^{-1}-r^{-1}}.
    \]
\end{prop}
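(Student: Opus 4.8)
The plan is to prove the inequality by a direct application of H\"older's inequality after splitting the power $|f|^q$ into two factors whose exponents are tuned so that the resulting conjugate exponents match $p$ and $r$. First I would record that the hypotheses $0<p<q<r\leq\infty$ force $p^{-1}>q^{-1}>r^{-1}$, so both the numerator and the denominator in the definition of $\lambda$ are positive, with the numerator the smaller of the two; hence $\lambda\in(0,1)$. This is the fact that makes the subsequent convex-combination structure meaningful and guarantees the exponents introduced below are positive.

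For the main case $r<\infty$, I would write
\[
    \int |f|^q = \int |f|^{\lambda q}\,|f|^{(1-\lambda)q}
\]
and apply H\"older with the conjugate pair $s = p/(\lambda q)$ and $s' = r/((1-\lambda)q)$. The key step---and really the heart of the argument---is the algebraic verification that these are genuinely conjugate, i.e.\ that $s^{-1}+s'^{-1}=1$. Multiplying the defining relation $\lambda\, p^{-1}+(1-\lambda)\,r^{-1}=q^{-1}$ through by $q$ gives exactly
\[
    \frac{\lambda q}{p} + \frac{(1-\lambda)q}{r} = 1,
\]
which is precisely the conjugacy condition; this is where the specific formula for $\lambda$ is used, and it is the only place any real computation is needed. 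With conjugacy in hand, H\"older yields
\[
    \int |f|^q \leq \left(\int |f|^{p}\right)^{\lambda q/p}\left(\int |f|^{r}\right)^{(1-\lambda)q/r} = \|f\|_p^{\lambda q}\,\|f\|_r^{(1-\lambda)q},
\]
and taking $q$-th roots gives the claimed bound. Finiteness of the right-hand side establishes the inclusion $L^p\cap L^r\subset L^q$ as a by-product.

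The case $r=\infty$ I would treat separately, since H\"older with an infinite exponent degenerates. Here $\lambda = p/q$, and I would simply estimate $|f|^{q-p}\leq \|f\|_\infty^{q-p}$ almost everywhere to obtain
\[
    \int |f|^q = \int |f|^{p}\,|f|^{q-p} \leq \|f\|_\infty^{q-p}\int |f|^p,
\]
from which the $q$-th root again yields $\|f\|_q\leq \|f\|_p^{\lambda}\|f\|_\infty^{1-\lambda}$, using $1-\lambda=(q-p)/q$. I do not anticipate any genuine obstacle: the only substantive point is the bookkeeping that identifies the H\"older exponents, and the endpoint $r=\infty$ is dispatched cleanly by this direct argument rather than by any limiting procedure.
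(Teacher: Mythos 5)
Your proof is correct, and it is essentially the canonical argument: the splitting $|f|^q=|f|^{\lambda q}|f|^{(1-\lambda)q}$ with conjugate exponents $p/(\lambda q)$ and $r/((1-\lambda)q)$, verified via $\lambda p^{-1}+(1-\lambda)r^{-1}=q^{-1}$, plus the direct $\|f\|_\infty$ estimate at the endpoint $r=\infty$. The paper itself supplies no proof at all—the proposition is quoted from Folland \cite{folland1999real}, where the proof on record is precisely this H\"older argument—so your attempt matches the intended (cited) proof rather than deviating from it.
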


By choosing $p=1$, $q=2$, and $r=3$ in the above proposition, we get that
$\|f\|^4_2\leq \|f\|_1 \|f\|_3^3$, which implies $\mean{H^{-2}}^2 - \mean{H^{-3}} \ \mean{H^{-1}} \leq0$. 

Now from Eqs.~\eqref{eq:alpha1} and \eqref{eq:alpha3} it follows $\alpha_1 = 2\alpha_3 \mean{H^{-1}}-\mean{H^{-3}}/\mean{H^{-1}}$, therefore $\alpha_1<0$.

Finally, the denominator  of $\alpha_2$ in Eq.~\eqref{eq:alpha2} 
is positive and its numerator $\mathcal{N}_2$ can be written as 

%\lajos{Here, you refer to $\alpha_2$ in \eqref{eq:alpha2} } 
\[
    \mathcal{N}_2 = 3 \left( \mean{H^{-2}}^2 - \mean{H^{-4}} \right) - 2  \mean{H^{-1}} \ \mean{H^{-3}}.
\]
The sign of the first term follows from the observations that 
%\lajos{I think we don't need to explain the meaning of $\mean{f}$}
\[
    \int_0^1(f(y)-\mean{f})^2\, dy \geq 0, \> 
    {\rm with} \> f= H^{-2}, 
\]
and that the second term is negative. 

\subsection{Piecewise-constant bathymetry}\label{sec:pwc-coeff}
We can compute explicit expressions for the coefficients for particular forms of $H$.
Here we just consider a simple piecewise-constant bathymetry
\begin{align}
    H(y) = \begin{cases} \frac{1}{d_1} & 0 < y \le 1/2 \\
                        \frac{1}{d_2} & 1/2 < y \le 1.
            \end{cases}
\end{align}
In this case we find that
\begin{align*}
    \mu & = \frac{(d_1-d_2)^2}{48 (d_1+d_2)^2} \\
    \nu_1 & = \mu/40 \\
    \nu_2 & = 3 \mu/40 \\
    \nu_1 + \nu_2 - \mu^2 & = \frac{(d_1-d_2)^2 \left(19 d_1^2+58 d_1
   d_2+19 d_2^2\right)}{11520 (d_1+d_2)^4}.
\end{align*}
Note that, since $d_1$ and $d_2$ are necessarily positive, the last expression above is also positive.
With more work, this can be proven for more general piecewise-constant bathymetry 
(e.g., if the  spatial fraction for each constant part differs from 1/2).
%\lajos{Why do we have $<$ here instead of $\le$?} by choosing $f= 1/H$.

% \[
%     \alpha_2 = 
%     \frac{1}{2}\left( \mean{H^{-2}}^2 - \mean{H^{-4}} \right) +  
%      \left(\mean{H^{-2}}^2 -  \mean{H^{-1}} \ \mean{H^{-3}}\right)
% \]

\section{Wasserstein distance}
\label{sec:Wasserstein}
% \documentclass[12pt]{article}
% \usepackage{amsmath} 
% \usepackage{url}
% \begin{document}
When comparing the solution of the finite volume scheme applied to the shallow water system with bathymetry and the solution of the homogeneized equations, we see that there are two types of errors:  a {\em shift\/} and a  {\em shape\/} error. Both of them will be taken into account altogether if we use the Wasserstein metric when comparing the two measures. 
Let us assume $\eta(x,t)-\eta_0$ is non-negative, which can always be obtained by a suitable choice of the constant $\eta_0$.

Given that mass is conserved both in the solution of the shallow water system and in the solution of the homogeneous equations, we can normalize the quantity
$\eta(x,t)-\eta_0$ by its integral, so that 
$f(x,t) = (\eta(x,t)-\eta_0)/\int_0^L (\eta(\tilde{x},t)-\eta_0)\,d\tilde{x}$ is the density function of a measure. Here $L$ is sufficiently large that no signal reaches it up to final time $T$.

Let us call $\mu_1(t)$ and $\mu_2(t)$ two time dependent measures with densities, respectively $f_1(x,t)$ and $f_2(x,t)$, the first referring for example to the detailed numerical solution of the shallow water system, and the second the one obtained by one of the homogenized equations. Then the Wasserstein distance in $L^1$ can be computed as 
\[
W_1(\mu_1,\mu_2) := \int_\mathbb{R} |F_1(x,t)-F_2(x,t)| \,dx
\]
(see, e.g., \url{https://en.wikipedia.org/wiki/Wasserstein_metric}),
where $F_1$ and $F_2$ denote the distribution functions of the two measures $\mu_1$ and $\mu_2$, i.e.\ the primitives of $f_1$ and $f_2$.
In the case of a distribution with compact support density which translates rigidly, say if $\mu_1$ with density $f_0(x)$ and $\mu_2$ with density $f_0(x-ct)$, then  this quantity computes the space shift between the two distributions, i.e.\ $ct$. 

% In order to have a normalisation, we should divide this quantity by the total distance travelled by the wave, say, for example, 
% \[
% W_R(\mu_1,\mu_2) =  \frac{\int_R |F_1(x,t)-F_2(x,t)| \,dx}{\int_R |F_1(x,t)-F_1(x,0)| \,dx}
% \]
%One could separate the ``shift'' and the ``shape'' errors. 
%Indeed, these quantities can be computed as follows.
%Consider the quantity 
%\[
%   S(y;t) :=  \int_\mathbb{R} |F_1(x,t)-F_2(x-y,t)| \,dx.
%\]
%Then the shape error would be 
%\[ 
%   {\rm Shape}_{\rm err}(t) := \min_y {\rm S}(y;t),
%\]
%while the shift error would be    
%\[
%   {\rm Shift}_{\rm err}(t) := \argmin_y S(y;t).
%\]
%By the triangle inequality,
%
%\begin{align*}
%   W_1(\mu_1(t),\mu_2(t)) & = \int_\mathbb{R} |F_1(x,t)-F_2(x,t)| \,dx  \\
%   & = \int_\mathbb{R} |F_1(x,t)-F_2(x-s,t)+F_2(x-s,t)-F_2(x,t)| \,dx \\ 
%   & \leq    {\rm Shift}_{\rm err}(t) + {\rm %Shape}_{\rm err}(t).
%\end{align*}

Note that even if $\eta_0$ can be chosen to make $\eta(x,t)-\eta_0$ non-negative, we prefer to keep $\eta_0$
to be the unperturbed profile, and, with a slight abuse, we apply the above formulas to such function, so that the moving wave train is very close to a function with compact support.

\section*{Acknowledgments}
The authors would like to thank Prof.~Giuseppe Di Fazio who provided the reference for the proof of Proposition \ref{prop:folland}. 
 G.~Russo would like to thank the Italian Ministry of University and Research (MUR) to support this research with funds coming from PRIN Project 2017 (No. 2017KKJP4X entitled “Innovative numerical methods for evolutionary partial differential equations and applications”), PRIN Project 2022 (No. 2022KA3JBA entitled  ``Advanced numerical methods for time dependent parametric partial differential equations with applications'') and KAUST for hosting him during the time this work was completed.

\newpage

\bibliographystyle{siamplain}
\bibliography{refs}

\end{document}